\documentclass[fleqn,a4paper,10pt]{amsart}

\usepackage{graphicx}
\usepackage{tikz}
\usetikzlibrary{shapes,decorations.pathmorphing}
\usepackage{amsmath}
\usepackage{amssymb,amsthm}
\usepackage{comment}
\usepackage{hyperref}

\def\cM{\mathcal{M}}
\def\cQ{\mathcal{Q}}

\def\cMh{\widehat{\cM}}
\def\cQh{\widehat{\cQ}}

\newtheorem{theo}{Theorem}
\newtheorem{prop}{Proposition}
\newtheorem{lem}{Lemma}

\theoremstyle{definition}
\newtheorem{rmk}{Remark}

\addtolength{\oddsidemargin}{-0.7cm} 
\addtolength{\evensidemargin}{-0.7cm}
\addtolength{\textwidth}{1.4cm}

\title{The three-point function of general planar maps}
\author{\'Eric Fusy}
\address{LIX, \'Ecole Polytechnique, 91128 Palaiseau, France}
\email{fusy@lix.polytechnique.fr}

\author{Emmanuel Guitter}
\address{Institut de Physique Th\'eorique, CEA, IPhT, 91191 Gif-sur-Yvette, France, CNRS, URA 2306}
\email{emmanuel.guitter@cea.fr}

\begin{document}
\maketitle

\begin{abstract}

We compute the distance-dependent three-point function of general planar maps and of bipartite planar maps, i.e., the generating
function of these maps with three marked vertices at prescribed pairwise distances. Explicit expressions are given for
maps counted by their number of edges only, or by both their numbers of edges and faces. A few limiting cases and 
applications are discussed.

\end{abstract}

\section{Introduction}
\label{sec:introduction}

The statistics of distances in planar maps is a subject of constant interest, whose study
already produced a lot of remarkable results but still leaves many open questions. 
A key ingredient in this study was the discovery by Schaeffer \cite{SchPhD,CMS09} (giving a  reformulation of a bijection due to Cori and Vauquelin~\cite{CoriVa}) 
of a distance-preserving bijective coding of
planar quadrangulations (later generalized to planar maps with arbitrary face degrees \cite{BDG04}) by decorated trees, reducing 
de facto a number of distance statistics problems to the simpler question of enumerating these trees. 
This approach was used to obtain the so-called {\it two-point function} which, by enumerating maps with two marked 
vertices at a prescribed distance, provides a measure of the statistics of distances between two random points in the map. 
In a first stage, the explicit expression of the two-point function was derived in~\cite{GEOD,BG12} for a number of families of maps with controlled {\it bounded} 
face degrees (triangulations, quadrangulations, ...), corroborating the original prediction of Ambj\o rn and Watabiki \cite{AmWa95}. 
The restriction to maps with bounded face degrees was lifted in a second, very recent, stage thanks to the discovery by
Ambj\o rn and Budd \cite{AmBudd} of yet another distance-preserving bijective coding of planar quadrangulations, now by general 
planar maps with {\it unbounded} face degrees. This latter coding, combined with the original Schaeffer bijection, 
allowed in turn to get an explicit expression for the two-point function of general planar maps controlled by their number of edges only
or by both their numbers of edges and faces.
This approach was then generalized in \cite{BFG} to obtain the two-point function of other families of maps and hypermaps 
with unbounded face degrees, controlled by their number of edges or their numbers of hyperedges and faces.
 
The more involved question of the {\it three-point function}, now enumerating maps with three marked 
vertices at prescribed pairwise distances, hence providing a refined information on the correlations
between mutual distances, was so far solved only in the simplest case of planar quadrangulations \cite{BG08}.
The solution relies on a natural extension by Miermont \cite{Miermont2009} of the original Schaeffer bijection and constituted 
so far the most advanced result on the statistics of distances within maps at the discrete level. 

In this paper, we show how to take advantage of the new Ambj\o rn-Budd bijection to get an explicit
expression for {\it the three-point function of general planar maps controlled by both their numbers of edges and faces}.
We also obtain the three-point function of the subclass of general planar maps made of 
{\it bipartite planar maps}.

The paper is organized as follows: Section 2 explains how to use the Ambj\o rn-Budd bijection to obtain
a bijective coding of bi-pointed and tri-pointed planar maps, i.e., maps with respectively two and 
three marked vertices, which preserves the information on the distance between the marked vertices.
The coding involves what we call $(s,t)$- and $(s,t,u)$-well-labelled maps, which are
maps with respectively two and three faces, whose vertices carry labels subject to a number of constraints
involving the parameters $s$, $t$, and $u$, themselves directly related to the prescribed pairwise 
distances between the marked vertices. The case of bipartite planar maps is obtained by keeping only
a subclass of these well-labelled maps, which we call very-well-labelled. 
We then exploit this coding in Section 3 to derive explicit expressions for the two-point (Sect.~3.1)
and three-point (Sect.~3.2) functions of general planar maps controlled by their number of edges
only. The case of bipartite planar maps is addressed in Sect.~3.3.     
Our derivation relies on the preliminary knowledge of the generating functions of
a number of well-labelled or very-well-labelled objects, an information which follows either
from previously known expressions or from the resolution of new recursion relations.
We then repeat our analysis in Section 4 to obtain {\it bivariate} two- and three-point functions,
with a control on both the number of edges and the number of faces, in the case of general 
maps (Sect.~4.1) or of bipartite maps (Sect.~4.2). We also discuss at this level the limit of
tri-pointed maps having a minimal number of faces (in practice one or two faces).
Section 5 is devoted to a number of applications of our results in the scaling limit
of maps with a large number of edges (and a fixed weight per face). This includes
the average number of vertices or edges lying on a geodesic path between two given 
far away vertices.
We gather our conclusions in Section 6.

\section{Bijective coding of bi-pointed and tri-pointed maps}
\subsection{Definitions}
A \emph{map} denotes a connected graph embedded on the sphere. A \emph{bipartite map} is
a map with all faces of even degree, and a \emph{quadrangulation} is a map with all
faces of degree $4$. A well-labelled map (resp. very-well-labelled map) 
is a map $M$ together with an assignment: $\ell:V\to\mathbf{Z}$ 
of integers to its vertices, such that for each edge $e=\{u,v\}$ of $M$, $|\ell(v)-\ell(u)|\leq 1$ (resp. 
$|\ell(v)-\ell(u)|= 1$). A very-well-labelled map
is necessarily bipartite. A vertex of a well-labelled map is called a \emph{local max} (resp.\ \emph{local min}) if it is not smaller (resp.\ not larger) than any of its neighbours. For a face $f$ of a well-labelled
map, denote by $\min(f)$ (resp.\ $\max(f)$) the minimum (resp.\ maximum) label over 
 vertices incident to $f$. Denote by $\cMh$ the set of well-labelled maps and by $\cQh$
the set of very-well-labelled quadrangulations. 

\subsection{The Ambj\o rn-Budd bijection}
Let $\Phi$ be the mapping from $\cQh$ to $\cMh$ such that, for $q\in\cQh$, $\Phi(q)$ is the map
 obtained from $q$ by applying the local rules of Figure~\ref{fig:bij_ab_loc} 
(left-part) to each face of $q$, then
deleting all local max and all edges of $q$. Similarly let $\Phi^-$ be the mapping from 
$\cQh$ to $\cMh$ such that, for $q\in\cQh$, $\Phi^-(q)$ is the map 
 obtained from $q$ by applying the local rules of Figure~\ref{fig:bij_ab_loc} 
(right-part) to each face of $q$, then
deleting all local mins and all edges of $q$. Ambj\o rn and Budd showed \cite{AmBudd} 
that $\Phi$ and
$\Phi^-$ are bijections between $\cQh$ and $\cMh$ that preserve several parameters, from which it follows (see Figure~\ref{fig:bijab} for an example):

\begin{figure}
\begin{center}
\includegraphics[width=12cm]{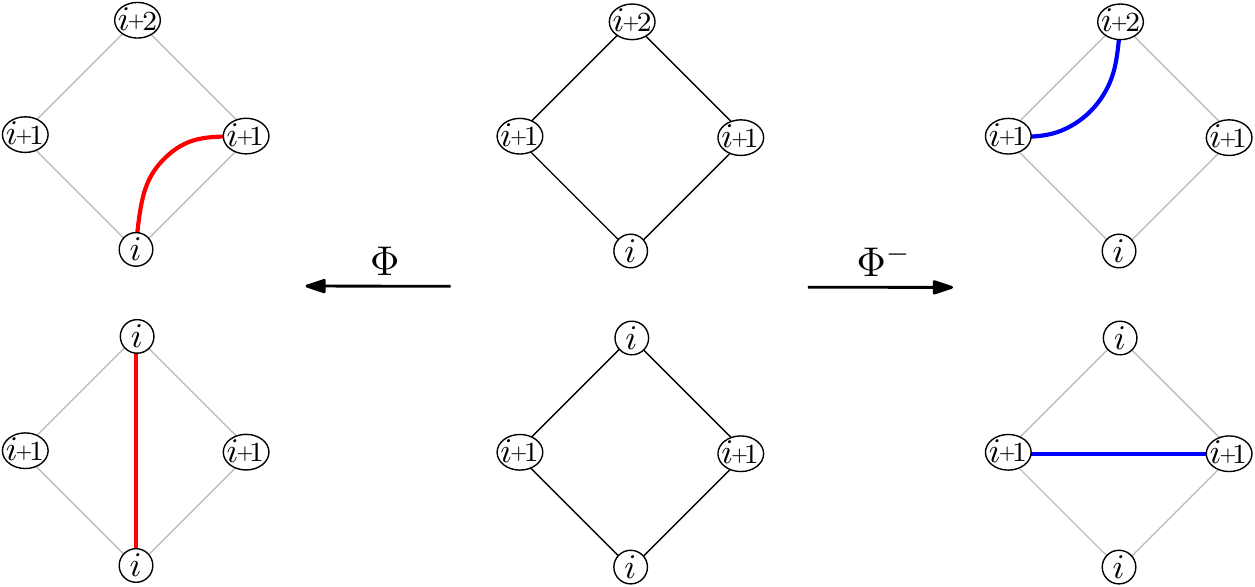}
\end{center}
\caption{Local rules of the bijections $\Phi$ (left) and $\Phi^-$ (right) applied within each
face of a very-well-labelled quadrangulation. The rules on the right are those of the Schaeffer
bijection \cite{SchPhD,CMS09}}
\label{fig:bij_ab_loc}
\end{figure}

\begin{figure}
\begin{center}
\includegraphics[width=13cm]{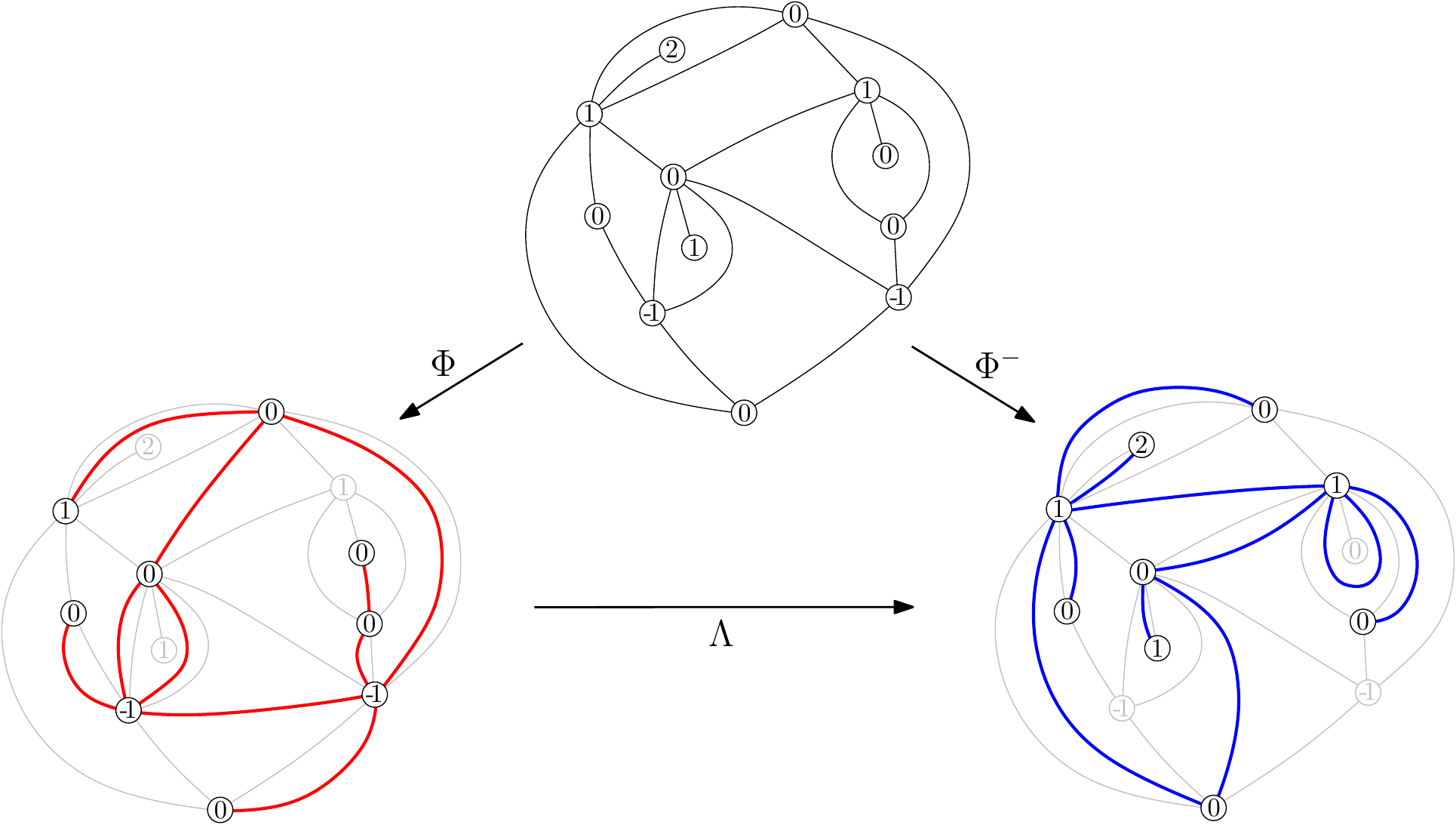}
\end{center}
\caption{The Ambj\o rn-Budd bijection on an example: each face $f'$ of $M'=\Lambda(M)$ corresponds
to a local min $v$ of $M$ (with $\ell(v)=\min(f')-1$, and with $v$ 
within $f'$ when $M$ and $M'$ are superimposed),  
and each face $f$ of $M$ corresponds to a local max $v'$ of $M'$ (with $\ell(v')=\max(f)+1$, and  
with $v'$ within $f$ when $M$ and $M'$ are superimposed).}
\label{fig:bijab}
\end{figure}

\begin{theo}[Ambj\o rn-Budd]\label{theo:ambu}
The mapping $\Lambda=\Phi^-\circ \Phi^{-1}$ is a bijection between $\cMh$ and $\cMh$
that preserves the number of edges, such that for $M\in\cMh$ and $M'=\Lambda(M)$, 
each local min $v$ of $M$ corresponds to a face $f'$ of $M'$, with $\ell(v)=\min(f')-1$, 
and each face $f$ of $M$ corresponds to a local max $v'$ of $M'$, with $\max(f)+1=\ell(v')$. 
\end{theo}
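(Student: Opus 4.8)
The plan is to derive Theorem~\ref{theo:ambu} as a direct consequence of the fact, established by Ambj\o rn and Budd, that both $\Phi$ and $\Phi^-$ are bijections from $\cQh$ to $\cMh$ preserving the relevant parameters. Since $\Lambda=\Phi^-\circ\Phi^{-1}$ is a composition of bijections $\cMh\to\cQh\to\cMh$, it is immediately a bijection from $\cMh$ to itself; the content of the theorem is therefore the tracking of the distinguished vertices and faces through the intermediate very-well-labelled quadrangulation. So the first step is to fix $M\in\cMh$, set $q=\Phi^{-1}(M)\in\cQh$, and observe $M'=\Phi^-(q)$.

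The key observation I would isolate is a symmetry between the two local rules of Figure~\ref{fig:bij_ab_loc}. The rule for $\Phi$ (left) deletes local maxima of $q$ and produces a map $M=\Phi(q)$ whose faces are in bijection with the deleted local maxima: concretely, I would argue that each local max $v'$ of $q$ sits inside a face $f$ of $M$, with the label relation $\ell(v')=\max(f)+1$, because the deleted vertex carried the largest label among those around it and the surviving incident vertices all bound the face $f$. Dually, the rule for $\Phi^-$ (right), which is the Schaeffer rule, deletes local minima of $q$; hence each local min $v$ of $q$ corresponds to a face $f'$ of $M'=\Phi^-(q)$, with $\ell(v)=\min(f')-1$. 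The crucial point is that these two statements refer to the \emph{same} quadrangulation $q$, so the local minima and local maxima of $q$ are shared data that $\Phi$ and $\Phi^-$ read off differently.

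With that symmetry in hand, the proof assembles by transport of structure. Running $\Phi^{-1}$ backwards from $M$ to $q$, the faces of $M$ are recovered as precisely the local maxima of $q$, carrying labels $\max(f)+1$; this is the inverse reading of the left rule. Running $\Phi^-$ forward from $q$ to $M'$, those same local maxima of $q$ survive as the local maxima $v'$ of $M'$, since the Schaeffer rule on the right only deletes local minima and leaves the labels of the surviving vertices untouched. Chasing the label through, a face $f$ of $M$ becomes a local max $v'$ of $M'$ with $\ell(v')=\max(f)+1$. Symmetrically, the local minima of $q$ are exactly the faces $f'$ of $M'$ produced by $\Phi^-$, with $\ell(v)=\min(f')-1$, and these same local minima of $q$ are the local minima of $M$ (untouched by $\Phi$ backwards). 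Preservation of the number of edges follows because both $\Phi$ and $\Phi^-$ are stated to preserve it, so the count is unchanged across the composition.

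The main obstacle, and the step I would spend the most care on, is justifying the local-to-global passage: that applying the purely local rules of Figure~\ref{fig:bij_ab_loc} face-by-face and then deleting edges genuinely turns each local extremum of $q$ into an incident face (or conversely) with the claimed label shift. This requires checking that the edges created around a single deleted local max $v'$ close up into one face $f$ of $M$ whose boundary labels all equal $\ell(v')-1$ at their maximum, and that no two deleted extrema merge their faces. Since the rules are local and the label constraint $|\ell(u)-\ell(w)|\le 1$ on $q$ forces the neighbours of an extremum to differ from it by exactly $\pm 1$, this verification is essentially a finite case analysis on the four orientations of each quadrangular face, but it is where the geometric content of the Ambj\o rn--Budd construction actually lives, and I would present it carefully rather than defer entirely to \cite{AmBudd}.
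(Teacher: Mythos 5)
Your proposal takes essentially the same route as the paper, which gives no proof of Theorem~\ref{theo:ambu} beyond deriving it from the cited Ambj\o rn--Budd result that $\Phi$ and $\Phi^-$ are parameter-preserving bijections between $\cQh$ and $\cMh$; your transport of the local extrema of the intermediate quadrangulation $q$ through the two bijections (local maxima of $q$ becoming faces of $M$ under $\Phi$ and surviving as local maxima of $M'$ under $\Phi^-$, and symmetrically for local minima) is exactly the intended reading, with the hard local-to-global verification correctly identified as the content residing in \cite{AmBudd}. One small correction: the neighbours of an extremum of $q$ differ from it by exactly $\pm 1$ because $q\in\cQh$ is \emph{very}-well-labelled (every edge increment is $\pm 1$), not because the weaker constraint $|\ell(u)-\ell(w)|\leq 1$ forces this at extrema --- under the weak constraint a local max could have neighbours of equal label.
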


\begin{rmk}
Note that  each edge $e'$ of labels $i-i$ in $M'=\Lambda(M)$ is ``dual''
to an edge of labels $(i-1)-(i-1)$ in $M$, such that (when $M$ and $M'$ are superimposed),  
$e'$ crosses $e$ (within the same face of the associated $q\in\cQh$). 
This also ensures that $\Lambda$ and $\Lambda^{-1}$ preserve the property of being very-well-labelled. 
\end{rmk}

We now state a useful lemma (in view of proving bijections for bi-pointed and tri-pointed
maps later on).

\begin{figure}
\begin{center}
\includegraphics[width=13cm]{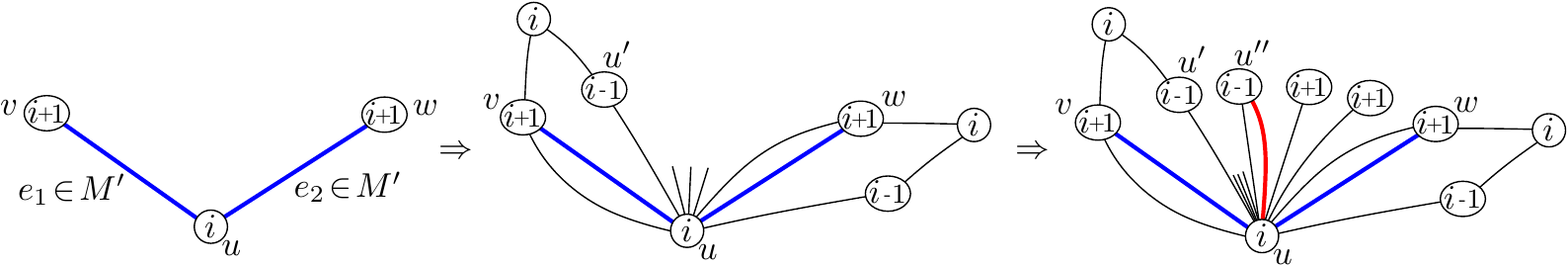}
\end{center}
\caption{The situation in the proof of Lemma~\ref{lem:neighbour}.}
\label{fig:proof_lem}
\end{figure}

\begin{lem}\label{lem:neighbour}
Let $M\in\cMh$ and $M'=\Lambda(M)$, considered as superimposed (via the associated $q\in\cQh$). 
Consider two edges $e_1,e_2$ of $M'$ incident to a vertex $u$ of $M'$, of label $i$,
 such that the extremities
$v$ and $w$ of $e_1$ and $e_2$ have label $i+1$. Let $S$ be the clockwise angular
sector between $e_1$ and $e_2$ around $u$. Then there is an edge 
of $M$ leaving $u$ in the sector $S$ and ending at a vertex of label $i-1$. 
\end{lem}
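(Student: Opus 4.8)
The plan is to reduce the statement to a local analysis around $u$ inside the superimposed quadrangulation $q\in\cQh$, and then to read off, from the local rules of Figure~\ref{fig:bij_ab_loc}, how the edges of $M$ and $M'$ are distributed among the corners of $u$. First I would record the structure of $q$ near $u$: since $q$ is very-well-labelled, each $q$-neighbour of $u$ has label $i+1$ (an \emph{up} neighbour) or $i-1$ (a \emph{down} neighbour), and each corner of $u$, delimited by two consecutive $q$-edges, lies in a single quadrangular face that is either \emph{simple} (labels $i-1,i,i+1,i$ up to rotation) or \emph{confluent} (labels alternating between two consecutive values). I would also check at this stage that $u$ is a vertex of both $M$ and $M'$: as $u\in V(M')$ it is not a local min of $q$, and since $e_1$ is drawn by the rule of $\Phi^-$ inside a face incident to $u$, that face is simple and exhibits both an up- and a down-neighbour of $u$, so $u$ is neither a local min nor a local max of $q$ and is therefore common to $M$ and $M'$.

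Next I would set up the dictionary between the hypothesis and the conclusion. By the rule of $\Phi^-$, an edge of $M'$ leaving $u$ towards a label $i+1$ is created inside a simple face, hugging the $q$-edge from $u$ to the maximal corner; thus each of $e_1,e_2$ selects one up-edge of $q$ at $u$ and comes together with the down-edge of $q$ pointing to the minimal corner (of label $i-1$) of the same simple face. It is convenient to invoke the symmetry $\ell\mapsto-\ell$, which exchanges local maxima and minima and, up to orientation, the left and right rules of Figure~\ref{fig:bij_ab_loc} (hence $\Phi$ and $\Phi^-$): under this symmetry the way $\Phi$ emits an edge of $M$ from $u$ down to a label $i-1$ is the mirror image of the way $\Phi^-$ emits an up-edge of $M'$, so that such a down-edge of $M$ is again produced inside a simple face adjacent to $u$, hugging the $q$-edge towards a down-neighbour. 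By contrast, the confluent faces incident to $u$ contribute only flat edges (of labels $i,i$), in accordance with the duality recorded in the Remark following Theorem~\ref{theo:ambu}, and never a down-edge of $M$.

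It then remains to locate this down-edge of $M$ inside the clockwise sector $S$, and this is where I expect the main difficulty to lie. The two bracketing up-edges $e_1,e_2$ fix the configuration at the two ends of $S$, and the decisive point is to determine, from the orientation convention of Figure~\ref{fig:bij_ab_loc}, on which side of each up-edge of $M'$ its simple face and the associated down-edge of $M$ are placed. One must then verify that the companion down-edge attached to $e_1$ (or, symmetrically, to $e_2$) falls inside the open sector $S$ and reaches a vertex of label $i-1$, handling the degenerate situations (runs of confluent faces, or stretches containing only up-neighbours) by a short case analysis on the local configurations. This bookkeeping, which Figure~\ref{fig:proof_lem} is designed to organize, is the genuine content of the proof; once it is carried out it yields an edge of $M$ leaving $u$ in $S$ and ending at a vertex of label $i-1$, as claimed.
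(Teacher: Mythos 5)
Your write-up is a plan rather than a proof: the decisive step is explicitly deferred. You correctly set the stage (the local face analysis of $q$ around $u$, the fact that $u$ is a vertex of both $M$ and $M'$, the observation that up-edges of $M'$ and down-edges of $M$ are emitted inside simple faces while confluent faces only produce flat edges), but then you write that one ``must verify'' that a companion down-edge falls inside $S$, that this is ``where the main difficulty lies'', and that ``once it is carried out'' the conclusion follows. That verification \emph{is} the lemma --- its whole content is the chirality claim that the clockwise sector $S$, as opposed to the counterclockwise one, contains the down-edge of $M$ --- and you never execute it. The paper does it concretely in three moves: (1) the rules of $\Phi^-$ force the $q$-edge immediately clockwise after $e_1$ around $u$ to lead to a vertex $u'$ of label $i-1$ (this anchors a down-neighbour strictly inside $S$); (2) scanning clockwise from $u'$ towards $w$, one defines $u''$ as the \emph{last} $q$-neighbour of $u$ of label $i-1$ before $w$; (3) the rules of $\Phi$ then guarantee an edge of $M$ from $u$ to $u''$, which lies in $S$.

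Beyond the incompleteness, the route you sketch would likely fail if carried out as stated. You propose that the $M$-edge is ``the companion down-edge attached to $e_1$'' --- i.e., the down $q$-edge in the simple face of $e_1$, which is the vertex the paper calls $u'$. But the $\Phi$ rule does not in general draw an $M$-edge from $u$ to the \emph{first} down-neighbour after $e_1$; it draws one to the \emph{last} down-neighbour $u''$ before $w$, because what triggers the rule is the simple face adjacent to the run of label-$(i+1)$ neighbours that precedes $w$, not the simple face of $e_1$. In general $u''\neq u'$, and the local configuration at $u'$ alone need not support an $M$-edge. So the correct bookkeeping is not a case analysis localized at $e_1$ (or at $e_2$ by your symmetry argument): it requires a global scan of the sector $S$, using the $q$-edge hugging $e_1$ as the entry point and the $q$-edge hugging $e_2$ as the terminator. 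The label-negation symmetry you invoke is a fair heuristic for relating the rules of $\Phi$ and $\Phi^-$, but it does not substitute for this scan, which is the actual argument organized by Figure~\ref{fig:proof_lem}.
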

\begin{proof}
The situation is shown in Figure~\ref{fig:proof_lem}. Let $q$ be the very-well-labelled quadrangulation
associated to $M$ (via $\Phi$) and to $M'$ (via $\Phi^-$). The local rules of $\Phi^-$ ensure
that $q$ has an edge from $u$ to $v$ (just to the left of $e_1$), an edge from $u$ to $w$ (just to the left of $e_2$),
and that the next edge of $q$ after $e_1$ in clockwise order around $u$ leads to a vertex of label $i-1$; 
denote by $u'$ this vertex. Starting from $u'$, let $u''$ be the last neighbour of $u$ of label $i-1$ (in clockwise order 
around $u$) before $w$. Then the local rules of $\Phi$ ensure that there is an edge of $M$ from $u$ to $u''$, and this edge is in $S$. 
\end{proof}

\subsection{Bijection for bi-pointed maps}
A \emph{bi-pointed map} is a map with two (different and distinguished) marked vertices, denoted $v_1$ and $v_2$. 
The distance between $v_1$ and $v_2$ is called the \emph{two-point distance} of $M$
and is denoted $d_{12}(M)$. For $s,t$ two positive integers, define an \emph{$[s,t]$-well-labelled map} 
as a bi-pointed well-labelled map such that $v_1$ and $v_2$ are the only local min, and
have respective labels $-s$ and $-t$. Define an \emph{$(s,t)$-well-labelled map} as a
well-labelled map with exactly two faces (which are distinguished) $f_1$ and $f_2$, such that
$\min(f_1)=-s+1$ and $\min(f_2)=-t+1$. In such a well-labelled map, a \emph{border-vertex} (resp.\ \emph{border-edge}) 
is a vertex (resp.\ an edge) incident to $f_1$ and $f_2$. 
An $(s,t)$-well-labelled map is said to be \emph{of type A} if
 the minimum label over all \emph{border-vertices}  is $0$, and none of the \emph{border-edges} 
has labels $0-0$. An $(s,t)$-well-labelled map is said to be \emph{of type B} if
 the minimum label over all border-vertices is $0$, and there is at least one
 border-edge with labels $0-0$. 

\begin{prop}\label{prop:bij_bipointed}
For $s,t\geq 1$, the mapping $\Lambda$ induces a bijection between $[s,t]$-well-labelled maps and $(s,t)$-well-labelled maps. For $M$ an $[s,t]$-well-labelled map and $M'=\Lambda(M)$, one has
$d_{12}(M)=s+t$ iff $M'$ is of type $A$, and $d_{12}(M)=s+t-1$ iff $M'$ is of type $B$.  
\end{prop}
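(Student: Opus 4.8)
The plan is to read the bijection straight off Theorem~\ref{theo:ambu} and then to recover $d_{12}(M)$ from the geometry of the cycle separating the two faces of $M'$. By Theorem~\ref{theo:ambu}, for $M'=\Lambda(M)$ the local mins of $M$ correspond to the faces of $M'$ with the label shift $\ell(v)=\min(f')-1$. Hence $M$ has exactly two local mins $v_1,v_2$ of labels $-s,-t$ if and only if $M'$ has exactly two faces, and then $\min(f_1)=\ell(v_1)+1=-s+1$, $\min(f_2)=-t+1$; distinguishing $f_i$ as the face containing $v_i$ (when $M,M'$ are superimposed via $q$) matches exactly the definition of an $(s,t)$-well-labelled map, and $\Lambda^{-1}$ reverses this. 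So the first sentence is immediate and all the content lies in the distance statement.

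For the geometry, since $M'$ has two faces Euler's formula forces its underlying graph to be unicyclic, so the border-edges form a single cycle $C$; superimposing $M$ and $M'$, the curve $C$ bounds two disks $D_1\ni v_1$ and $D_2\ni v_2$. The key intermediate claim I would isolate is that every vertex $x$ of $M$ lying in $\overline{D_1}$ satisfies $d_M(x,v_1)=\ell(x)+s$ (and symmetrically on the $D_2$ side). The lower bound is the Lipschitz estimate $d_M(x,v_1)\ge|\ell(x)-\ell(v_1)|=\ell(x)+s$ coming from $|\ell(u)-\ell(v)|\le 1$ along edges. For the upper bound I would build a strictly descending path of $M$ from $x$ that never leaves $D_1$: since $v_1$ is the only local min of $M$ inside $\overline{D_1}$, such a path must terminate at $v_1$, and its length is then $\ell(x)+s$. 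Producing descending edges confined to the disk is precisely what Lemma~\ref{lem:neighbour} supplies, since at a vertex $u$ of $M'$ it yields an edge of $M$ leaving $u$ into a prescribed clockwise sector and dropping the label by one; choosing the sectors to point into $D_1$ one walks down to $v_1$ without crossing $C$.

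Next I would bound $d_{12}$ from below by looking at how a path $P$ from $v_1$ to $v_2$ crosses $C$. If $P$ passes through a border-vertex $u$, then $|P|\ge d_M(v_1,u)+d_M(u,v_2)\ge s+t+2\ell(u)$ by the Lipschitz bound. Otherwise some edge $ab$ of $P$ crosses a border-edge $e'$, with $a\in D_1$, $b\in D_2$, giving $|P|\ge(\ell(a)+s)+1+(\ell(b)+t)=s+t+1+\ell(a)+\ell(b)$. The labels of the crossing edge $ab$ are dictated by the local rules of Figure~\ref{fig:bij_ab_loc}: by the Remark after Theorem~\ref{theo:ambu} a flat border-edge $c-c$ is crossed by an edge of labels $(c-1)-(c-1)$, and an up border-edge $c-(c+1)$ by an edge of labels $(c-1)-c$, so these crossings contribute at least $s+t-1+2c$ and $s+t+2c$ respectively, where $c\ge\mu:=\min_{C}\ell$.

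Finally I would assemble the two bounds. In type $A$ one has $\mu=0$ and no $0-0$ border-edge, so the two border-edges at a label-$0$ vertex $u$ are forced upward (their other endpoints are border-vertices of label $\ge 0$, and $0-0$ is excluded); Lemma~\ref{lem:neighbour} then applies on each side of $u$, showing $u$ is a vertex of $M$ and providing descents into $D_1$ and $D_2$, so the through-path realizes $s+t$, while every crossing costs at least $s+t$, giving $d_{12}=s+t$. In type $B$ the $(-1)-(-1)$ edge $ab$ dual to a $0-0$ border-edge yields the path $v_1\leadsto a\to b\leadsto v_2$ of length $(s-1)+1+(t-1)=s+t-1$, and no crossing can cost less, so $d_{12}=s+t-1$. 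The converses follow by exclusion: if $M'$ is neither type then $\mu\ne 0$; for $\mu\ge 1$ every crossing costs at least $s+t+1$, so $d_{12}>s+t$, while for $\mu\le-1$ the through-path at a minimal border-vertex gives $d_{12}\le s+t+2\mu<s+t-1$. I expect the main obstacle to be the intermediate claim of the second step, namely orchestrating the angular sectors of Lemma~\ref{lem:neighbour} along the entire descent so that the constructed path provably stays inside $D_1$ (equivalently, that the label measures the in-disk distance to the local min), together with the careful reading of the crossing-edge labels from the local rules; the rest is then bookkeeping.
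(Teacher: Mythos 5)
Your overall skeleton is the paper's: the first sentence via Theorem~\ref{theo:ambu}, a lower bound on $d_{12}$ obtained from the first crossing of a path with the border cycle $\Gamma$, upper bounds via label-decreasing paths seeded by Lemma~\ref{lem:neighbour}, and a case analysis on the minimal border label $\mu$ and on flat border-edges (your four-way exclusion $\mu\geq 1$ / type A / type B / $\mu\leq -1$ at the end is in fact a slightly cleaner packaging of the paper's back-and-forth). But there is a genuine gap, and it sits exactly where you placed your ``key intermediate claim.'' The assertion that \emph{every} vertex $x$ of $M$ in $\overline{D_1}$ satisfies $d_M(x,v_1)=\ell(x)+s$ is much stronger than what is needed, and your proposed mechanism cannot prove it: Lemma~\ref{lem:neighbour} requires the two edges $e_1,e_2$ of $M'$ at $u$ to go \emph{up} to label $\ell(u)+1$, and at a border vertex of non-minimal label the two incident $\Gamma$-edges may perfectly well carry labels $\ell(u)$ or $\ell(u)-1$, in which case no sector ``pointing into $D_1$'' satisfying the hypotheses exists; nor can you descend along $\Gamma$ itself, since border-edges are edges of $M'$, not of $M$. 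The paper avoids this entirely: it descends only from a border vertex of \emph{minimal} label $a$ (case (i), where minimality plus the absence of a flat $a$-$a$ border-edge forces both $\Gamma$-neighbours to have label $a+1$, so Lemma~\ref{lem:neighbour} applies automatically), or from the endpoints of the $M$-edge dual to a flat $a$-$a$ border-edge (case (ii)); after the first step the path has labels strictly below every border label, so it can never again touch $\Gamma$, and strictly decreasing edges never cross $\Gamma$, so confinement is automatic and no further use of Lemma~\ref{lem:neighbour} is needed. Your final paragraph in fact only invokes descents in these two situations, so the blanket claim is an unproven (and unnecessary) detour rather than a load-bearing step --- but as written the proof rests on it and is incomplete.

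Two further points need fixing. First, your crossing table misreads the local rules: a non-flat border-edge of labels $c$-$(c+1)$ is \emph{never} crossed by an edge of $M$. The Remark after Theorem~\ref{theo:ambu} covers flat edges only, and in a simple face of $q$ the superimposed edges of $M$ and $M'$ do not cross; crossings occur exactly between a flat $i$-$i$ edge of $M'$ and its dual flat $(i-1)$-$(i-1)$ edge of $M$. This is not cosmetic: the confinement of your descending paths rests precisely on this fact (a strictly decreasing edge is non-flat, hence cannot cross $\Gamma$), whereas under your table an edge of labels $0$-$(-1)$ could cross an up border-edge $0$-$1$ and escape the disk, which would break the type-A upper bound. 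Second, in the converse case $\mu\leq -1$ you write that ``the through-path at a minimal border-vertex gives $d_{12}\leq s+t+2\mu$,'' but if the minimal border vertex lies on a flat $\mu$-$\mu$ border-edge, Lemma~\ref{lem:neighbour} does not apply there and the through-path may not exist; one must split as in the paper into cases (i) and (ii), the latter giving the even shorter bound $s+t+2\mu-1$ via the dual edge, so the conclusion $d_{12}<s+t-1$ survives. With the blanket claim discarded, the crossing rules corrected, and this last dichotomy added, your assembly coincides with the proof of Proposition~\ref{prop:bij_bipointed} in the paper.
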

\begin{proof}
We just have to show the statements of the second sentence 
(the first sentence immediately follows from Theorem~\ref{theo:ambu}). 
Let $\Gamma$ be the submap of $M'$ formed by the border-vertices and border-edges (note that $\Gamma$ is a cycle). Denote by $a$ the minimum label over all vertices of $\Gamma$. Let 
$P$ be a path in $M$ from $v_1$ to $v_2$, and let $p$ be the
first intersection of $P$ with $\Gamma$. Two cases can occur: 
if $p$ is at a vertex of $\Gamma$, of label $i$, then
the portion of $P$ before $p$ (resp.\ after $p$) has length at least $s+i$ (resp.\ at least $t+i$) because the label-increment along each edge is at most $1$; hence $P$ has length at least $s+t+2i$. 
If $p$ is at the middle of an edge $e'$ of $\Gamma$, of labels $i-i$, then $e'$ is ``dual'' to an edge $e$
of $M$ of labels $(i-1)-(i-1)$, 
and $P$ hits $p$ when traversing $e$. 
Again, the portion of $P$ before $e$ (resp.\ after $e$) has length at least $s+i-1$ (resp.\
at least $t+i-1$), so that $P$ has length at least $s+t+2i-1$. This observation ensures that if 
 $d_{12}(M)=s+t$ then the labels $i$ on $\Gamma$ cannot all be positive, hence $a\leq 0$,  and if $d_{12}(M)=s+t-1$ then there
 exists on $\Gamma$ a vertex with label $i<0$ or an edge with labels $0-0$, hence either we have $a< 0$ or we have $a=0$
 and there is at least one border-edge of labels $0-0$.  

We now aim at obtaining an upper bound for $d_{12}(M)$ in terms of $a$. Observe that 
two cases can arise: (i) no border-edge has labels $a-a$, (ii) at least one border-edge has 
labels $a-a$. In case (i), let $v$ be a border-vertex of label $a$. Its neighbours on $\Gamma$ have
label $a+1$, so by Lemma~\ref{lem:neighbour}, $v$ has (in $M$) a neighbour $w$ of label $a-1$ that belongs
to $f_1$ (note that $w$ is strictly in $f_1$ since all border-vertices have label at least $a$). 
 Let $v_1$ be the local min of $M$ in $f_1$. If $w\neq v_1$, then $w$ is not a local min of $M$, 
so that $w$ has (in $M$) a neighbour of label $a-2$, strictly in $f_1$. 
Continuing this way one builds a label-decreasing
path (staying strictly in $f_1$) from $v$ to $v_1$, hence of length $s+a$. Similarly one can build a  label-decreasing
path from $v$ to $v_2$, of length $t+a$. Hence $d_{12}(M)\leq s+t+2a$. 
In case (ii), let $e'$ be a border-edge of labels $a-a$, and let $e\in M$ be its dual edge, with labels $(a-1)-(a-1)$. Let $w_1$ be the extremity of $e$ in $f_1$ and $w_2$ the extremity of $e$ in $f_2$ 
(note that $w_1$ is strictly in $f_1$ and $w_2$ is strictly in $f_2$).
Again, if $w_1\neq v_1$, then $w_1$ has (in $M$) a neighbour of label $a-2$, and continuing this way one
builds a label-decreasing path $P_1$ between $w_1$ and  $v_1$, of length $s+a-1$. Similarly one can
build  a label-decreasing path $P_2$ between $w_2$ and $v_2$, of length $t+a-1$. Concatenating 
$P_1$, $e$, and $P_2$ yields a path between $v_1$ and $v_2$ of length $s+t+2a-1$.  Hence $d_{12}(M)\leq s+t+2a-1$. Using the first paragraph of the proof, 
we conclude that, if $d_{12}(M)=s+t$, then $a=0$ and we are in case (i), i.e., $M'$ is of type A;
 and if $d_{12}(M)=s+t-1$, then $a=0$ and we are in case (ii), i.e., $M'$ is of type B. 

Conversely, if $M'$ is of type A, then by the arguments of the first paragraph, $d_{12}(M)\geq s+t$; 
and by the arguments of the second paragraph,   
for each border-vertex $v$ of label $0$ one can build a path of length $s+t$ from $v_1$ to $v_2$ passing
by $v$ at distance $s$ from $v_1$. Hence $d_{12}(M)=s+t$. 
And similarly if $M'$ is of type B, then by the arguments of the first paragraph, 
$d_{12}(M)\geq s+t-1$; and by the arguments of the second paragraph,  
for each border-edge $e'$ of labels $0-0$ 
one can build a path of length $s+t-1$ from $v_1$ to $v_2$, with $e$ as the $s$th edge along the path.
Hence $d_{12}(M)=s+t-1$. 
\end{proof}

\begin{figure}
\begin{center}
\includegraphics[width=13cm]{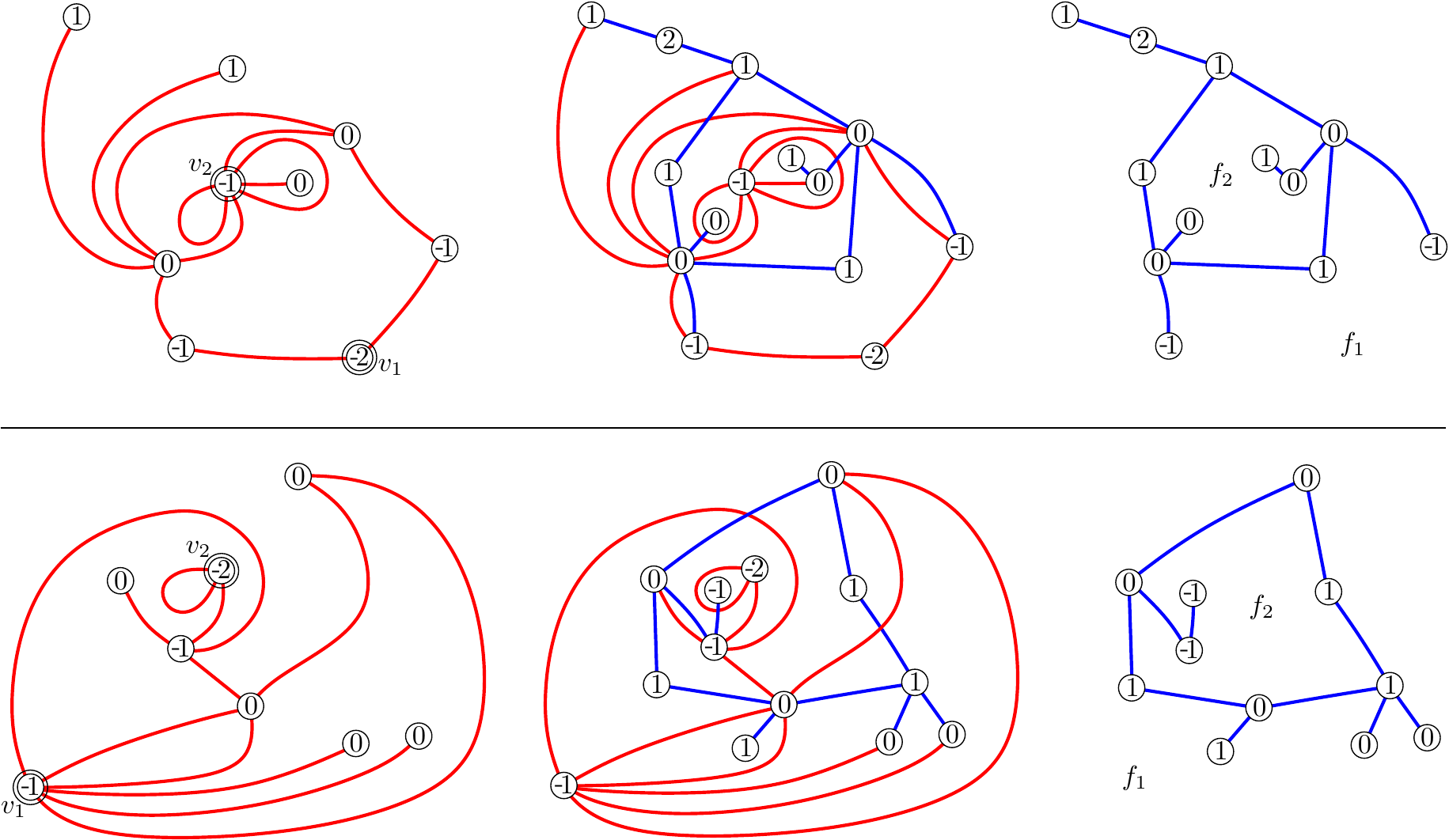}
\end{center}
\caption{Top: the left-part shows a bi-pointed map of two-point distance $3$, endowed with its $[1,2]$-well-labelling; the right-part shows the associated $(1,2)$-well-labelled map of type A (and the middle-part shows both drawings superimposed). Bottom: the left-part shows a bi-pointed map of two-point distance $2$, endowed with its $[1,2]$-well-labelling; the right-part shows the associated $(1,2)$-well-labelled map of type B (and the middle-part shows both drawings  superimposed).}
\label{fig:bij_bipointed}
\end{figure}

The proposition then yields the following bijection (see Figure~\ref{fig:bij_bipointed} for examples):

\begin{theo}\label{theo:bij_bipointed}
Let $s,t$ be two positive integers. 

(A): the mapping $\Lambda$ induces a bijection (via $[s,t]$-well-labelled maps) 
between bi-pointed maps of two-point distance $s+t$, and $(s,t)$-well-labelled maps of type $A$;
the bi-pointed map is bipartite iff the $(s,t)$-well-labelled map of type A is very-well-labelled
\footnote{Note that an $(s,t)$-well-labelled map which is very-well-labelled (for short an
$(s,t)$-very-well-labelled map) is of type A iff the minimum label over border vertices is $0$.}.

(B): the mapping $\Lambda$ induces a bijection  (via $[s,t]$-well-labelled maps) 
between bi-pointed maps of two-point distance $s+t-1$, 
and $(s,t)$-well-labelled maps of type~$B$. 

In both bijections (A) and (B), each face of the bi-pointed map corresponds to a local max in 
the associated $(s,t)$-well-labelled map.
\end{theo}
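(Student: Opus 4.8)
The plan is to factor the desired bijection exactly as announced, namely bi-pointed map $\to$ $[s,t]$-well-labelled map $\xrightarrow{\ \Lambda\ }$ $(s,t)$-well-labelled map, so that only the first arrow requires new work: Proposition~\ref{prop:bij_bipointed} already supplies the second arrow together with the matching of the two-point distance ($s+t$ for type~A, $s+t-1$ for type~B). To build the first arrow I would endow a bi-pointed map $(M,v_1,v_2)$ with the canonical distance-labelling $\ell(x)=\min(d(x,v_1)-s,\,d(x,v_2)-t)$, and take the inverse map to be the mere forgetting of the labelling. The whole statement then reduces to showing that, for $D\in\{s+t,\,s+t-1\}$, this labelling sets up a bijection between bi-pointed maps of two-point distance $D$ and $[s,t]$-well-labelled maps $M$ with $d_{12}(M)=D$.

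First I would check that $\ell$ is a legitimate $[s,t]$-well-labelling whenever $D\in\{s+t,s+t-1\}$ (in fact whenever $D\geq|s-t|$). The property $|\ell(x)-\ell(y)|\leq1$ along edges is inherited from the two distance functions, each varying by at most $1$ across an edge; the identities $\ell(v_1)=-s$ and $\ell(v_2)=-t$ follow from $D\geq|s-t|$; and the fact that $v_1,v_2$ are the only local minima comes from a one-line descent argument: if $x\neq v_1,v_2$ and, say, the minimum defining $\ell(x)$ is attained by the $v_1$-term, then a neighbour $y$ on a geodesic from $x$ to $v_1$ satisfies $\ell(y)\leq d(y,v_1)-s=\ell(x)-1$, so $x$ is not a local min.

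The heart of the argument, and the step I expect to be the main obstacle, is the reverse canonicity statement: every $[s,t]$-well-labelling necessarily equals the above $\ell$, which is what makes ``forget the labelling'' a genuine two-sided inverse. The inequality $\ell(x)\leq\min(d(x,v_1)-s,d(x,v_2)-t)$ is immediate from the Lipschitz bound applied along a shortest path to $v_1$ or to $v_2$. For the reverse inequality I would descend from $x$ along strictly label-decreasing edges (possible at every non-local-min, and each step drops the label by exactly $1$ since the increments lie in $\{-1,0,1\}$); as $v_1,v_2$ are by definition the only local minima, this path terminates at $v_1$ or $v_2$ after $\ell(x)+s$ (resp.\ $\ell(x)+t$) steps, whence $d(x,v_1)\leq\ell(x)+s$ (resp.\ $d(x,v_2)\leq\ell(x)+t$) and the matching lower bound follows. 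Composing the resulting bijection with Proposition~\ref{prop:bij_bipointed} restricted to $d_{12}=s+t$ (resp.\ $s+t-1$) yields (A) (resp.\ (B)); and since the underlying map is untouched by the labelling, the face/local-max correspondence is exactly the one delivered by Theorem~\ref{theo:ambu}.

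It remains to treat the bipartite refinement of (A). By the Remark following Theorem~\ref{theo:ambu}, $\Lambda$ preserves very-well-labelledness, so I may replace ``$M'=\Lambda(M)$ is very-well-labelled'' by ``$M$ is very-well-labelled'', i.e.\ by ``$\ell$ is a very-well-labelling''. One direction is the already-noted fact that a very-well-labelled map is bipartite. For the converse I would run a parity computation: in a bipartite map adjacent vertices lie in opposite classes, so $d(x,v_1)-d(x,v_2)\equiv D\pmod 2$ is constant, and with $D=s+t$ the two quantities $d(x,v_1)-s$ and $d(x,v_2)-t$ have a common parity that flips whenever $x$ crosses an edge; hence $\ell$ changes parity across every edge, forcing $|\ell(x)-\ell(y)|=1$. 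This also explains why the refinement is stated only for (A): for $D=s+t-1$ the two terms have opposite parities, which is precisely what permits the $0$--$0$ border-edges characteristic of type~B.
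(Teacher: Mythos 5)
Your proposal is correct and follows essentially the same route as the paper: reduce everything to Proposition~\ref{prop:bij_bipointed} by proving existence and uniqueness of the $[s,t]$-well-labelling via the same canonical labelling $\ell_0(v)=\min(d(v,v_1)-s,\,d(v,v_2)-t)$, with the identical descent-along-label-decreasing-edges argument for uniqueness and the identical parity argument for the bipartite refinement. Your only additions are making explicit the appeal to the remark that $\Lambda$ preserves very-well-labelledness (which the paper uses implicitly) and the observation of why case (B) admits no bipartite refinement --- both harmless and consistent with the paper.
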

\begin{proof}
It suffices to show that a bi-pointed map $M$ of two-point distance $s+t$ (resp $s+t-1$) can be
uniquely labelled as an $[s,t]$-well-labelled map, and then to apply Proposition~\ref{prop:bij_bipointed}. 
Consider the labelling-assignment $\ell_0(v)=\mathrm{min} (d(v,v_1)-s,d(v,v_2)-t)$, where $d(u,w)$ denotes the distance (in $M$) between $u$ and $w$.  Clearly $\ell_0$ is a well-labelling and any vertex $v\notin\{v_1,v_2\}$ is not a local min. And since $|s-t|< s+t=d(v_1,v_2)$ 
(recall that $s$ and $t$ are positive), it is easy to see that $v_1$ and $v_2$ are local min, 
of respective labels $-s$ and $-t$. This proves the existence of such a labelling. 
To prove uniqueness, consider a labelling $\ell$ that makes $M$ $(s,t)$-well-labelled. For any
vertex $v$ of $M$ which is not a local min, one can find a label-decreasing path $P$ to a local min, say $v_1$. 
Then $\ell(v)-\ell(v_1)=|P|\geq d_{12}(M)$, hence $\ell(v)\geq \ell_0(v)$. 
Moreover, since the increments along edges are at most $1$, one also has (considering
increments along the geodesic paths from $v$ to $v_1$ and from $v$ to $v_2$) $\ell(v)\leq\ell_0(v)$. Hence 
$\ell(v)=\ell_0(v)$ and we have the uniqueness of such a labelling.   

Finally note that, for a bi-pointed map $M$ with $d_{12}(M)=s+t$, $\ell_0$ is a very-well labelling
iff $M$ is bipartite (indeed, if $M$ is bipartite, for $v$ a vertex of $M$, either both
$d(v,v_1)-s$ and $d(v,v_2)-t$ are even, or both $d(v,v_1)-s$ and $d(v,v_2)-t$ are odd, depending on whether $v$ is in one vertex-color or the other).  
\end{proof}

\begin{rmk}\label{rk:bi_pointed}
The arguments of the proof of Proposition~\ref{prop:bij_bipointed} 
imply that --- $M$ denoting the bi-pointed map
and $M'$ the corresponding $(s,t)$-well-labelled map --- in case (A) of Theorem~\ref{theo:bij_bipointed}, 
each border-vertex of label $0$ in $M'$ corresponds to a vertex $v$ of $M$ that belongs to a (not necessarily unique) geodesic path from $v_1$ to $v_2$, with $v$ at distance $s$ from $v_1$,
and in case (B)  of Theorem~\ref{theo:bij_bipointed}, 
each border-edge of labels $0-0$ in $M'$ corresponds to an edge $e$ of $M$ that belongs to a (not necessarily unique) geodesic path from $v_1$ to $v_2$, with $e$ the $s$th edge along the path.
\end{rmk}

\subsection{Bijection for tri-pointed maps}
A \emph{tri-pointed map} is a map $M$ with three (different and distinguished) marked vertices, denoted $v_1,v_2,v_3$. 
Denote by $d_{12},d_{13},d_{23}$ 
the distances between the marked vertices, i.e., $d_{ij}$ is the distance between $v_i$ and $v_j$.

For $s,t,u$ three positive integers, define an \emph{$[s,t,u]$-well-labelled map} 
as a tri-pointed well-labelled map such that $v_1, v_2, v_3$ are the only local min, and
have respective labels $-s,-t,-u$. Define an \emph{$(s,t,u)$-well-labelled map} as a
well-labelled map with exactly three faces (which are distinguished) $f_1,f_2,f_3$, such that
$\min(f_1)=-s+1$, $\min(f_2)=-t+1$, and $\min(f_3)=-u+1$.  For such a map,
a \emph{border-vertex} is a vertex incident to at least two different faces, and a \emph{border-edge} 
is an edge incident to two different faces. More precisely, for $1\leq i<j\leq 3$,
an \emph{$(i,j)$-border vertex} is a vertex incident to both $f_i$ and $f_j$ (and possibly also to the other face), and an \emph{$(i,j)$-border edge} is an edge incident to both $f_i$ and $f_j$. 
An $(s,t,u)$-well-labelled map is said to be \emph{of type A} if the minimum label over all
border-vertices is $0$, there is no border-edge of labels $0-0$, and 
for all $1\leq i<j\leq 3$ there is at least one $(i,j)$-border vertex of label $0$. 
An $(s,t,u)$-well-labelled map is said to be \emph{of type B} if the minimum label over all
border-vertices is $0$, and 
for all $1\leq i<j\leq 3$ there is at least one $(i,j)$-border edge of labels $0-0$. Note in particular
that, in an $(s,t,u)$-well-labelled map of type A or B, any of the three faces is adjacent to the two others.
 
\begin{prop}\label{prop:bij_tripointed}
For $s,t,u$ three positive integers, the mapping $\Lambda$ induces a bijection between 
$[s,t,u]$-well-labelled maps and $(s,t,u)$-well-labelled maps. 
For $M$ an $[s,t,u]$-well-labelled map and $M'=\Lambda(M)$, the distances $d_{12}, d_{13},d_{23}$ of $M$ 
satisfy 
$$
d_{12}=s+t,\ \ d_{13}=s+u,\ \ d_{23}=t+u
$$
iff $M'$ is of type A, and satisfy
$$
d_{12}=s+t-1,\ \ d_{13}=s+u-1,\ \ d_{23}=t+u-1
$$
iff $M'$ is of type B. 
\end{prop}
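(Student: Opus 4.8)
The plan is to follow closely the proof of Proposition~\ref{prop:bij_bipointed}, upgrading its two-face arguments to the three-face setting; throughout I write $(s_1,s_2,s_3)=(s,t,u)$. The bijection statement (first sentence) is immediate from Theorem~\ref{theo:ambu}: the three local min $v_1,v_2,v_3$, of labels $-s,-t,-u$, correspond under $\Lambda$ to three faces $f_1,f_2,f_3$ of $M'$ with $\min(f_i)-1$ equal to the label of the matching local min, which is exactly the defining condition of an $(s,t,u)$-well-labelled map; since these are the only local min, $M'$ has exactly three faces, and conversely $\Lambda^{-1}$ recovers exactly three local min of the prescribed labels. For the distance analysis I would work with the border graph $\Gamma$ formed by all border-vertices and border-edges of $M'$, which separates the sphere into the three face-regions, keeping in mind the Remark that a border-edge of labels $i-i$ is dual to an edge of $M$ of labels $(i-1)-(i-1)$ that it crosses.

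For the implication ``type $\Rightarrow$ distances'' I would establish matching lower and upper bounds for each pair $(i,j)$. For the lower bound, any path $P$ in $M$ from $v_i$ to $v_j$ must cross $\Gamma$; at such a crossing point $p$ the increment-at-most-one property gives $|P|\geq s_i+s_j+2\ell(p)$ if $p$ is a vertex, and $|P|\geq s_i+s_j+2k-1$ if $p$ lies on the dual of a border-edge of labels $k-k$ (this single crossing suffices, so possible detours through the third face are irrelevant here). As the minimum border label is $0$ in both types, this yields $d_{ij}\geq s_i+s_j$ in type A (where the absence of $0-0$ border-edges forces $k\geq 1$) and $d_{ij}\geq s_i+s_j-1$ in type B. For the upper bound I would use the per-pair data built into the definitions: in type A, starting from an $(i,j)$-border vertex $v$ of label $0$, its border-neighbours have label $1$ (by minimality and the no-$0-0$-edge condition), so Lemma~\ref{lem:neighbour} applied to the $f_i$-corner and to the $f_j$-corner at $v$ produces neighbours of label $-1$ in $f_i$ and in $f_j$; extending each by a label-decreasing path — which stays strictly inside its face and reaches the unique local min there, since all border labels are $\geq 0$ while these paths run through labels $\leq -1$ — gives a path $v_i\to v\to v_j$ of length $s_i+s_j$. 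In type B the same is run from the dual edge of an $(i,j)$-border edge of labels $0-0$, giving length $s_i+s_j-1$. Combining bounds yields the two prescribed distance lists.

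For the converse ``distances $\Rightarrow$ type'' I would argue in three steps. First, the minimum border label equals $0$: it is $\leq 0$ since otherwise every crossing would make the relevant geodesic strictly too long, and it is $\geq 0$ since a border vertex or border-edge of negative label would, through the construction above, produce a path shorter than the prescribed distance. Second, the $0-0$-edge condition: in the type-A case any $0-0$ border-edge would give (via its dual) a path of length $s_i+s_j-1<d_{ij}$, so none exists, while in the type-B case the analysis of a length-extremal geodesic will force one for each pair. Third, and most delicately, the per-pair incidence condition. Here I would use that a length-extremal geodesic from $v_i$ to $v_j$ has all labels $\leq 0$ in type A (because $|P|\geq s_i+s_j+2m$ with $m$ the maximal label) and all labels $\leq -1$ in type B; this forces each crossing with $\Gamma$ to occur exactly at a label-$0$ vertex (type A) or on the dual of a $0-0$ border-edge (type B), and a direct $f_i$--$f_j$ crossing then exhibits the required $(i,j)$-border vertex or $(i,j)$-border edge.

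The step I expect to be the main obstacle is precisely this last one, because a geodesic from $v_i$ to $v_j$ could a priori detour through the third region $f_k$, so that its crossings would be $(i,k)$- and $(k,j)$-incidences rather than $(i,j)$-incidences. The resolution I would push is a length accounting: between its exit into $f_k$ and its re-entry — both occurring at label $0$ (resp.\ label $-1$) — the portion inside $f_k$ has length at least the corresponding label difference, hence is nonnegative and is strictly positive unless the two crossings coincide at a single vertex; but such a vertex is then incident to all three faces and is in particular an $(i,j)$-border vertex (resp.\ carries an $(i,j)$-border edge) of the correct label. Thus a genuine detour would make the geodesic strictly longer than its extremal value, a contradiction, and the per-pair conditions of type A and B follow. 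The only remaining care is the bookkeeping when the relevant label-$0$ border vertex is trivalent, where Lemma~\ref{lem:neighbour} must be applied to the angular sector that is genuinely a corner of the target face.
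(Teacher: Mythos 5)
Your proposal is correct and follows essentially the same route as the paper's proof: the bijection itself is delegated to Theorem~\ref{theo:ambu}, the distance bounds come from crossings of the border graph $\Gamma$ (lower bounds from label increments at vertex- or dual-edge-crossings, upper bounds from Lemma~\ref{lem:neighbour} and label-decreasing paths to the unique local min of each face), the minimum border label is pinned to $0$ together with the $0$--$0$-edge dichotomy, and the per-pair incidence conditions are extracted from a geodesic's crossings with $\Gamma$. Your ``detour through $f_k$ costs strictly positive length unless the entry and exit crossings coincide'' accounting is just a repackaging of the paper's argument that the first and last intersections $p,p'$ of the geodesic with $\Gamma$ must coincide at an $(i,j)$-incidence (the paper phrases it as a four-case contradiction when $P$ avoids $\Gamma_{ij}$, and notes in passing that this also rules out $\Gamma$ being two disjoint cycles, which your per-pair conclusion rules out implicitly as well).
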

\begin{proof}
Again we just have to show the statements of the second sentence 
(the first sentence immediately follows from Theorem~\ref{theo:ambu}). 
Let $\Gamma$ be the embedded subgraph of $M'$ formed by the border-vertices and border-edges
($\Gamma$ cuts the sphere into $3$ components, and at this point we can not yet exclude the 
possibility that $\Gamma$ consists of two disjoint cycles). For $1\leq i<j\leq 3$, let $\Gamma_{ij}$ be the 
subgraph of $\Gamma$ made of vertices and edges incident to $f_i$ and $f_j$. 
Let $a$ be the minimum label over all
vertices of $\Gamma$. 
By similar arguments as in the first paragraph of the proof of Proposition~\ref{prop:bij_bipointed},  
in case $d_{12}=s+t,d_{13}=s+u,d_{23}=t+u$ we must have $a\leq 0$, and in case $d_{12}=s+t-1,d_{13}=s+u-1,d_{23}=t+u-1$ we must either have 
$a<0$ or have $a=0$
and there is an edge in $\Gamma$ of labels $0-0$.  

Next we prove an upper bound for the two-point distance between two of the marked vertices in terms
of $a$. Two cases can arise: (i) no border edge has labels $a-a$, (ii) at least one border-edge
has labels $a-a$. In case (i) let $v$ be a border-vertex of label $a$, say without loss of generality
that $v$ is incident to $f_1$ and $f_2$. By the same arguments (using Lemma~\ref{lem:neighbour}) as in the proof of Proposition~\ref{prop:bij_bipointed}, there is a label-decreasing path in $f_1$ from $v$ to $v_1$,
and  there is a label-decreasing path in $f_2$ from $v$ to $v_2$, so that the concatenation of these
two paths yields a path of length $s+t+2a$ between $v_1$ and $v_2$. In case (ii) let $e'$ be a border-edge of labels $a-a$, and let $e$ be the edge of labels $(a-1)-(a-1)$ in $M$ that is dual to $e'$. 
Say without loss of generality
that $e'$ is incident to $f_1$ and $f_2$. Again by the same arguments (using Lemma~\ref{lem:neighbour}) as in the proof of Proposition~\ref{prop:bij_bipointed}, 
there is a path of length $s+t+2a-1$ (passing by $e$) 
between $v_1$ and $v_2$. Hence, together with the first paragraph, we conclude that, if the distances satisfy 
$d_{12}=s+t,d_{13}=s+u,d_{23}=t+u$, then  
 we must have $a=0$ and there is no border-edge of labels $0-0$,
and if the distances satisfy  $d_{12}=s+t-1,d_{13}=s+u-1,d_{23}=t+u-1$, then we must have 
$a=0$ and there is a border-edge of labels $0-0$. 

Assume $d_{12}=s+t,d_{13}=s+u,d_{23}=t+u$. We now prove that, for any $1\leq i<j\leq 3$, $\Gamma_{ij}$
contains a vertex of label $0$. Take (without loss of generality) $i=1$ and $j=2$, and consider
a path $P$ of length $s+t$ between $v_1$ and $v_2$. Assume for contradiction that
 $P$ does not meet $\Gamma_{12}$.  
Let $p$ be the first point of intersection of $P$ with $\Gamma$ and let $p'$ be the last point
of intersection of $P$ with $\Gamma$. Since $P$ does not meet $\Gamma_{12}$, $p$ is in
$\Gamma_{13}$ and not incident to $f_2$, and $p'$ is in $\Gamma_{23}$ and not incident to $f_1$;
in particular $p\neq p'$. Note that $p$ is either 
 at a vertex of label $b$ with $b\geq 0$ or at the middle of  an edge of $\Gamma_{13}$ 
of labels $b-b$ with $b\geq 1$; and similarly $p'$ is either 
 at a vertex of label $c$ with $c\geq 0$ or at the middle of  an edge of $\Gamma_{23}$ 
of labels $c-c$ with $c\geq 1$. 
In all four cases it is easily checked that the length of $P$ is strictly larger than $s+t$, 
which yields a contradiction. Hence $P$ has to cross $\Gamma_{12}$, either at a vertex 
of label $b\geq 0$, or at an edge of labels $b-b$ with $b\geq 1$. In the second case, $P$ has length at least $s+t+2b-1>s+t$, which is impossible, and in the first case $P$ has length at least $s+t+2b$, so that $b=0$. We conclude
that $\Gamma_{12}$ (and more generally any $\Gamma_{ij}$) has a vertex of label $0$. (Note that, in particular, all $\Gamma_{ij}$ are non empty, which excludes the possibility that $\Gamma$ is made
of two disjoint cycles, hence $\Gamma$ is connected.)  We have thus shown that, if  $d_{12}=s+t,d_{13}=s+u,d_{23}=t+u$, then $M'$ is of type A. 
Similarly, if $d_{12}=s+t-1,d_{13}=s+u-1,d_{23}=t+u-1$, one can  
prove that, for any $1\leq i<j\leq 3$, $\Gamma_{ij}$
contains an edge of labels $0-0$ (again by considering a geodesic path from $v_i$ to $v_j$,
then showing that $P$ must have length larger than $s+t-1$ in all cases, except for 
the case where $P$ crosses $\Gamma_{ij}$ at an edge of labels $0-0$).  Hence $M'$ is of type B.

Conversely, if $M'$ is of type A, then one can check at first that the distances satisfy $d_{12}\geq s+t,d_{13}\geq s+u,d_{23}\geq t+u$
(by the arguments in the first paragraph of the proof of Proposition \ref{prop:bij_bipointed}). Then one can also check that $d_{12}\leq s+t,d_{13}\leq s+u,d_{23}\leq t+u$ (for $d_{12}$, take a vertex of label $0$ on $\Gamma_{12}$, and construct a label-decreasing
path from $v$ to $v_1$, of length $s$, and a label-decreasing
path from $v$ to $v_2$, of length $t$). Hence if $M'$ is of type A, then the distances satisfy  $d_{12}=s+t,d_{13}=s+u,d_{23}=t+u$. Similarly, if $M'$ is of type B, then the distances satisfy  $d_{12}=s+t-1,d_{13}=s+u-1,d_{23}=t+u-1$.
\end{proof}

\begin{figure}
\begin{center}
\includegraphics[width=13cm]{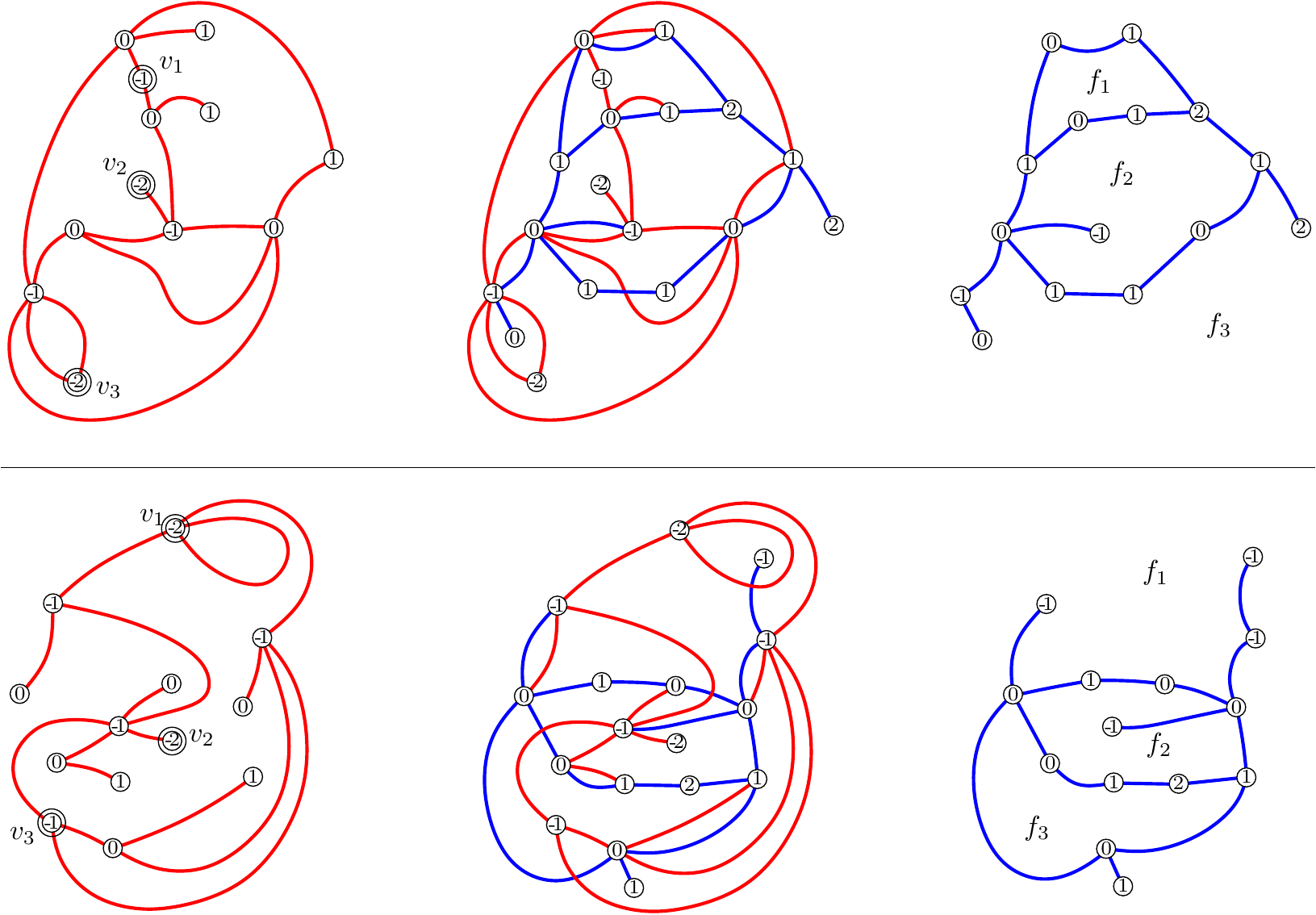}
\end{center}
\caption{Top: the left-part shows a tri-pointed map of distances $d_{12}=3,d_{13}=3,d_{23}=4$, endowed with its $[1,2,2]$-well-labelling; the right-part shows the associated $(1,2,2)$-well-labelled map of type A (and the middle-part shows both drawings superimposed). Bottom: the left-part shows a tri-pointed map of distances $d_{12}=3,d_{13}=2,d_{23}=2$, endowed with its $[2,2,1]$-well-labelling; the right-part shows the associated $(2,2,1)$-well-labelled map of type B 
(and the middle-part shows both drawings  superimposed).}
\label{fig:bij_tripointed}
\end{figure}

The proposition then yields the following bijection (see Figure~\ref{fig:bij_tripointed} for examples):

\begin{theo}\label{theo:bij_tpointed}
Let $d_{12},d_{13},d_{23}$ be a triple of positive integers satisfying triangular inequalities. 

(A): If  $d_{12}+d_{13}+d_{23}$ is even and $d_{12},d_{13},d_{23}$ satisfy strict triangular inequalities, let $s,t,u$ be the unique triple of positive integers~\footnote{given by $s=(d_{12}+d_{13}-d_{23})/2, t=(d_{12}+d_{23}-d_{13})/2, u=(d_{13}+d_{23}-d_{12})/2$.}
such that
$$
d_{12}=s+t,\ \ d_{13}=s+u,\ \ d_{23}=t+u.
$$
Then tri-pointed maps with distances $d_{12},d_{13},d_{23}$ are in bijection
(via $[s,t,u]$-well-labelled maps) with $(s,t,u)$-well-labelled maps of type A. The tri-pointed map
is bipartite iff the corresponding $(s,t,u)$-well-labelled map of type A is very-well-labelled.

(B): If  $d_{12}+d_{13}+d_{23}$ is odd, let $s,t,u$ be the unique triple of positive integers~\footnote{given by $s=(d_{12}+d_{13}-d_{23}+1)/2, t=(d_{12}+d_{23}-d_{13}+1)/2, u=(d_{13}+d_{23}-d_{12}+1)/2$.}
such that:
$$
d_{12}=s+t-1,\ \ d_{13}=s+u-1,\ \ d_{23}=t+u-1.
$$
Then tri-pointed maps with distances $d_{12},d_{13},d_{23}$ are in bijection
(via $[s,t,u]$-well-labelled maps) with $(s,t,u)$-well-labelled maps of type B.

In both bijections (A) and (B), each face of the tri-pointed map corresponds to a local max in 
the associated $(s,t,u)$-well-labelled map.
\end{theo}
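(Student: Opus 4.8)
The plan is to follow exactly the strategy used for the bi-pointed case in Theorem~\ref{theo:bij_bipointed}. By Proposition~\ref{prop:bij_tripointed}, the mapping $\Lambda$ already realizes a bijection between $[s,t,u]$-well-labelled maps and $(s,t,u)$-well-labelled maps, and it identifies those $[s,t,u]$-well-labelled maps whose associated distances are $(s+t,s+u,t+u)$ (resp.\ $(s+t-1,s+u-1,t+u-1)$) with the type-A (resp.\ type-B) objects. Hence it suffices to prove that every tri-pointed map $M$ with distances $d_{12},d_{13},d_{23}$ as in the statement admits a \emph{unique} labelling turning it into an $[s,t,u]$-well-labelled map, under which its three distances coincide with $d_{12},d_{13},d_{23}$. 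Composing this canonical labelling with Proposition~\ref{prop:bij_tripointed} then yields both bijections, and the face/local-max correspondence is inherited directly from Theorem~\ref{theo:ambu}. Note that the hypotheses (strict triangular inequalities with even sum in case A, triangular inequalities with odd sum in case B) are exactly what guarantees, through the footnote formulas, that $s,t,u$ are well-defined positive integers, so that Proposition~\ref{prop:bij_tripointed} applies.

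First I would introduce the candidate labelling
$$
\ell_0(v)=\min\bigl(d(v,v_1)-s,\ d(v,v_2)-t,\ d(v,v_3)-u\bigr),
$$
where $d(\cdot,\cdot)$ is the graph distance in $M$. Since each distance function is $1$-Lipschitz along edges, $\ell_0$ is automatically a well-labelling. I would then check that $v_1,v_2,v_3$ are local min with labels $-s,-t,-u$: using $d_{12}=s+t$, $d_{13}=s+u$ (case A), one computes $\ell_0(v_1)=\min(-s,s,s)=-s$, and for any neighbour $w$ of $v_1$ the triangle inequality gives $d(w,v_j)\geq d_{1j}-1$, whence each of the three defining terms of $\ell_0(w)$ is at least $-s$; the analogous computation holds in case B, where positivity of $s,t,u$ is what keeps the bounds valid. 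Conversely, any vertex $v\notin\{v_1,v_2,v_3\}$ fails to be a local min: if the minimum defining $\ell_0(v)$ is attained at $d(v,v_i)-\delta_i$ (with $\delta_1=s,\delta_2=t,\delta_3=u$), a neighbour $w$ of $v$ on a geodesic toward $v_i$ satisfies $\ell_0(w)\leq d(w,v_i)-\delta_i=\ell_0(v)-1$. This establishes existence.

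For uniqueness, suppose $\ell$ is any $[s,t,u]$-well-labelling of $M$. Since $v_1,v_2,v_3$ are its only local min (with labels $-\delta_i$), from any vertex $v$ one can follow a strictly label-decreasing path to some $v_i$; each step drops the label by exactly $1$, so the path has length $\ell(v)+\delta_i\geq d(v,v_i)$, giving $\ell(v)\geq d(v,v_i)-\delta_i\geq\ell_0(v)$. On the other hand, following a geodesic from $v$ to the index $i$ realizing $\ell_0(v)$ and using that edge-increments are at most $1$ yields $\ell(v)\leq d(v,v_i)-\delta_i=\ell_0(v)$. Hence $\ell=\ell_0$, as required, and the two bijections follow by Proposition~\ref{prop:bij_tripointed}.

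Finally, for the bipartite refinement in case A, I would show that $\ell_0$ is a \emph{very}-well-labelling iff $M$ is bipartite. In a bipartite map the parity of $d(v,v_i)$ equals that of $\mathrm{col}(v)+\mathrm{col}(v_i)$; combined with $d_{12}=s+t$, $d_{13}=s+u$, $d_{23}=t+u$ this forces the three terms $d(v,v_i)-\delta_i$ to share a common parity depending only on $\mathrm{col}(v)$, so adjacent vertices carry labels of opposite parity and thus differing by exactly $1$. Conversely a very-well-labelling $2$-colours $M$ by label parity. (This also explains why no bipartite statement is needed in case B: a bipartite map always has $d_{12}+d_{13}+d_{23}$ even.) The step I expect to require the most care is the verification that $v_1,v_2,v_3$ are \emph{exactly} the local min with the prescribed labels, since this is where positivity of $s,t,u$ — i.e.\ the (strict) triangular inequalities — enters and where degeneracies would otherwise appear; everything else is a direct transcription of the bi-pointed argument and of Proposition~\ref{prop:bij_tripointed}.
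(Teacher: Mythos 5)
Your proposal is correct and follows essentially the same route as the paper: reduce to the existence and uniqueness of the $[s,t,u]$-well-labelling $\ell_0(v)=\min\bigl(d(v,v_1)-s,\,d(v,v_2)-t,\,d(v,v_3)-u\bigr)$, then invoke Proposition~\ref{prop:bij_tripointed}, with the bipartite refinement handled by the same parity argument. The only difference is that you spell out the local-min and uniqueness verifications that the paper delegates to the proof of Theorem~\ref{theo:bij_bipointed}, and these details are carried out correctly.
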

\begin{proof}
It suffices to show that a tri-pointed map $M$ with distances $s+t,s+u,t+u$ (resp.\ $s+t-1,s+u-1,t+u-1$) 
can be
uniquely labelled as an $[s,t,u]$-well-labelled map, and then to apply 
Proposition~\ref{prop:bij_tripointed}. 
Consider the labelling-assignment $\ell_0(v)=\mathrm{min} (d(v,v_1)-s,d(v,v_2)-t,d(v,v_3)-u)$, where $d(v,w)$ denotes the distance (in $M$) between $v$ and $w$.  Clearly $\ell_0$ is a well-labelling. 
Similarly as in the proof of Theorem~\ref{theo:bij_bipointed} it can be checked that $\ell_0$ 
is indeed the unique $[s,t,u]$-well-labelling of~$M$. 

Moreover, for a tri-pointed map $M$ with distances $s+t,s+u,t+u$, $\ell_0$ is a very-well labelling
iff $M$ is bipartite (indeed, if $M$ is bipartite, for $v$ a vertex of $M$, either 
$d(v,v_1)-s,d(v,v_2)-t,d(v,v_3)-u$ are all even, or $d(v,v_1)-s,d(v,v_2)-t,d(v,v_3)-u$ are all odd, depending on
whether $v$ is in one vertex-color or the other).  
\end{proof}

\begin{rmk}
The case where the triangular inequalities on $d_{12},d_{13},d_{23}$ are not strict is not covered
by Theorem~\ref{theo:bij_tpointed} but by Remark~\ref{rk:bi_pointed} after Theorem~\ref{theo:bij_bipointed}. 
Indeed, let again $s,t,u$ be the values
such that $d_{12}=s+t,d_{13}=s+u,d_{23}=t+u$. 
When 
the triangular inequalities are not strict, exactly one of $s,t,u$ is zero (note that at most one of $s,t,u$ can be zero for the three marked vertices to be distinct), say $u=0$.  In that case, $v_3$ is a vertex
on a geodesic path between $v_1$ and $v_2$, at distance $s$ from $v_1$. By Remark~\ref{rk:bi_pointed}, 
tri-pointed maps
with distances $d_{12}=s+t, d_{13}=s, d_{23}=t$ are in bijection with $(s,t)$-well-labelled maps of type A 
with a marked border-vertex $v_3$ of label~$0$.
\end{rmk}

\section{Two- and three-point functions}
\label{sec:genfunc}

We shall now make use of Theorems \ref{theo:bij_bipointed} and \ref{theo:bij_tpointed} to derive
explicit expressions for the two- and three-point functions of general and bipartite planar maps. Recall that, by two-point 
(resp.\ three-point) function, we mean, in all generality, the generating function of bi-pointed (resp.\ tri-pointed) maps 
with a prescribed two-point distance $d_{12}$ (resp.\ with prescribed pairwise distances $d_{12}$, $d_{13}$ and $d_{23}$). 
We shall first concentrate on the two- and three-point functions of {\it general planar maps}, enumerated with a weight $g$ {\it per edge} ,
and then consider the subclass of {\it bipartite planar maps}. Note that bi-pointed maps may have a $k$-fold symmetry by ``rotation" around their 
two marked vertices: as customary, these $k$-fold symmetric maps receive an extra conventional 
weight $1/k$ (only with this convention is the two-point function simple). In the following,
we shall denote by $G_{d_{12}}\equiv G_{d_{12}}(g)$ and $G_{d_{12},d_{13},d_{23}}
\equiv G_{d_{12},d_{13},d_{23}}(g)$ (resp.\ by  $\tilde{G}_{d_{12}}$ and $\tilde{G}_{d_{12},d_{13},d_{23}}$)
the two- and three-point functions of general (resp.\ bipartite) planar maps. 

\subsection{Two-point function of general maps}
\label{sec:gentwopoint}
The two- and three-point functions may be expressed in terms of a number of generating functions for suitably defined
well-labelled objects which we will introduce along this paper as we need them.
A first basic ingredient is the generating function $T_s\equiv T_s(g)$ ($s>0$) of $(s)^+$-well-labelled maps, enumerated with a weight 
$g$ per edge, with a {\it marked corner} at a vertex labelled $0$. 
Here, by $(s)^+$-well-labelled map, we mean a well-labelled map with a single face $f$ (i.e., a tree) such that $\min(f)\geq-s+1$.

Introducing the notation
\begin{equation}
[s]_x \equiv 1-x^s\ ,
\label{eq:crocheti}
\end{equation}
the following expression for $T_s$ has been known for quite a while, obtained by various techniques \cite{GEOD,BG12}
\begin{equation}
\begin{split}
&T_s=T \frac{[s]_x[s+3]_x}{[s+1]_x[s+2]_x} \\
&\text{where}\ \ T=\frac{1+4x+x^2}{1+x+x^2}\ \ \text{and}\ \ g=x \frac{1+x+x^2}{(1+4x+x^2)^2}\ .
\label{eq:2pgenTi}
\end{split}
\end{equation}
In the above parametrization of $g$ (which is symmetric under $x\to 1/x$) and in similar expressions below, 
we always pick for $x$ the solution having modulus less than $1$ near $g=0$ (in particular $x=O(g)$).  
Note then that, for $s>0$, $T_s=1+O(g)$ with a first term $1$ accounting for the ``vertex-map" reduced to a single vertex with label $0$.
Note also that the above expression for $T_s$ may formally be extended to the case $s=0$ as it yields
$T_0=0$, which is the wanted result. 
 
An explicit expression for the two-point function of general planar maps was already obtained in \cite{AmBudd,BFG}.
It follows from a bijective coding of maps with two marked vertices at distance $d_{12}$ by $(s)$-well-labelled maps 
(well-labelled maps with a single face $f$ satisfying $\min(f)=-s+1$) with $s=d_{12}$, with a marked vertex 
labelled $0$ which is not a local max. It was found that,
for $d_{12}\geq 1$
 \begin{equation}
 G_{d_{12}}= \log\left(\frac{1+g\, T_{s}T_{s+1}}{1+g \, T_{s-1}T_{s}}\right)
 =\log\left(\frac{([s+1]_x)^3[s+3]_x}{[s]_x([s+2]_x)^3}\right)\quad \text{with}\ s=d_{12}\ .
 \label{eq:twopoint}
 \end{equation}
 
\begin{figure}[h!]
\begin{center}
\includegraphics[width=7cm]{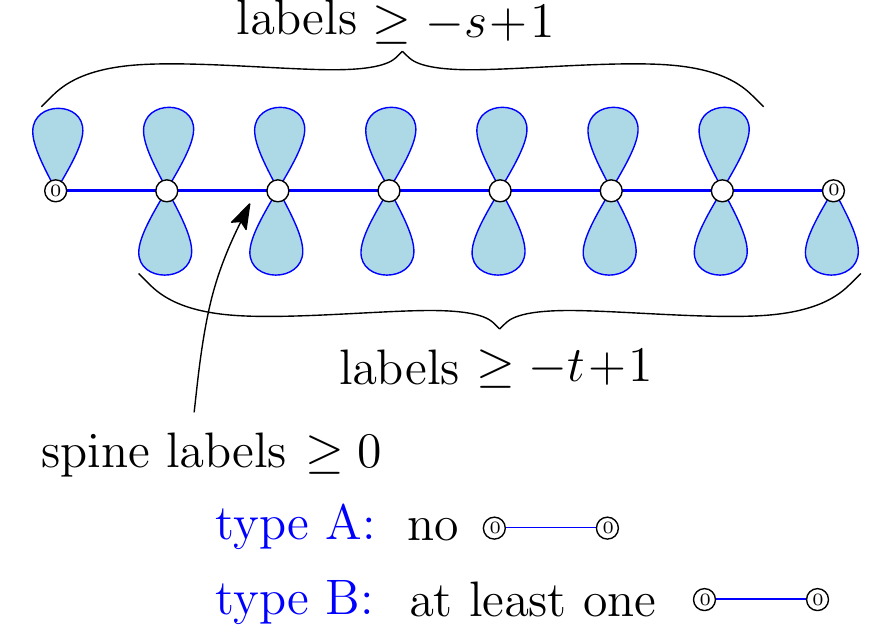}
\end{center}
\caption{Schematic picture of an $(s,t)^+$-well-labelled chain. The blobs represent attached well-labelled subtrees.
The labels of the spine-vertices are $0$ at the extremities of the spine and non-negative in-between.
The chain is of type A if it has no spine-edge with labels $0-0$, and of type B otherwise.} 
\label{fig:chains}
\end{figure}

Our new bijections (A) and (B) of Theorem \ref{theo:bij_bipointed} provide two alternative routes to recover this two-point 
function. This new approach is instructive as it will lead us to introduce and evaluate new generating functions
which will turn out to be useful to later compute the three-point function. 

The first generating function of interest is that, $N_{s,t}\equiv N_{s,t}(g)$, 
of $(s,t)^+$well-labelled {\it chains} of type A, enumerated with a weight $g$ per edge, defined as follows: by well-labelled chain, 
we mean a well-labelled tree made of a distinguished linear (oriented) spine of some arbitrary length, with additional subtrees attached to
the spine-vertices (see Figure~\ref{fig:chains} for an illustration). 
We have an $(s,t)^+$-well-labelled chain if (i) the labels of the spine-vertices are $0$ at the extremities of the spine and non-negative in-between and
(ii) the vertices of the subtrees attached to the first extremity of the spine and to the left of 
all inner spine-vertices have a label $\geq-s+1$ and the vertices of the subtrees attached to the last extremity of the spine 
and to the right of all inner spine-vertices have a label 
$\geq -t+1$. Finally an $(s,t)^+$-well-labelled chain is said to be of type $A$ if there is no spine-edge with labels $0-0$. 
If the spine has length $0$, i.e., reduces to a single vertex, we decide by convention 
{\it not to attach any subtree} at all to this vertex: this configuration receives a weight $1$ accordingly. 
The above definition of $N_{s,t}$ holds for $s,t>0$ and we extend it for convenience by setting $N_{s,0}=N_{0,t}=N_{0,0}=1$. 

Similarly, we may introduce the generating function $O_{s,t}\equiv O_{s,t}(g)$ of $(s,t)^+$-well-labelled chains of type B
defined as $(s,t)^+$-well-labelled chains with at least one spine-edge with labels $0-0$
(again this definition holds for $s,t>0$ and we extend it by now setting $O_{s,0}=O_{0,t}=O_{0,0}=0$).

From the bijection (A) of Theorem \ref{theo:bij_bipointed}, when $d_{12}\geq 2$, $G_{d_{12}}$ is also the generating function of 
$(s,t)$-well-labelled maps of type A, enumerated with a weight $g$ per edge, for any pair of positive integers $s$ and $t$ satisfying $s+t=d_{12}$.
At this stage, it is useful to weaken the definition of $(s,t)$-well-labelled maps of type A and consider instead what we shall call
$(s,t)^+$-well-labelled maps of type A, satisfying now the weaker conditions $\min(f_1)\geq -s+1$ and $\min(f_2)\geq -t+1$
for the labels incident to their faces $f_1$ and $f_2$. In other words, an $(s,t)^+$-well-labelled map is an $(s',t')$-well-labelled
map with $0<s'\leq s$ and $0<t'\leq t$. Focusing on the cycle $\Gamma$ formed by their border-vertices and border-edges,
we then remark that $(s,t)^+$-well-labelled maps of type A may be seen as cyclic sequences of elementary blocks (which consist of
$(s,t)^+$-well-labelled 
chains of type A having no spine-label $0$ between the extremities of its spine) while $(s,t)^+$-well-labelled chains of type A 
correspond to (linear) sequences of blocks of the same type. By a standard argument, we immediately deduce that
the generating function for $(s,t)^+$-well-labelled maps of type A is given by $\log(N_{s,t})$ and consequently, that that of
$(s,t)$-well labelled maps of type A is given by $\Delta_s\Delta_t \log(N_{s,t})$. Here $\Delta_s$ is the finite difference operator $\Delta_s f(s)\equiv f(s)-f(s-1)$. Applying $\Delta_s$ on the generating function $\log(N_{s,t})$ indeed selects those configurations satisfying 
$-s+2>\min(f_1)\geq -s+1$, hence $\min(f_1)=-s+1$ and similarly, applying $\Delta_t$ imposes $\min(f_2)=-t+1$.
The finite difference operators therefore ensure the passage from $(s,t)^+$-well-labelled objects to $(s,t)$-well-labelled ones. We 
shall use this standard trick in various occasions later in the paper.
We deduce the relation, for $s,t>0$
\begin{equation}
G_{d_{12}}=\Delta_s\Delta_t \log(N_{s,t})=\log\left(\frac{N_{s,t}N_{s-1,t-1}}{N_{s-1,t}N_{s,t-1}}\right)\quad \text{with}\ s+t=d_{12}
\label{eq:twopointbis}
\end{equation}
(note that this holds for $s=1$ and $t=1$ thanks to our convention $N_{s,0}=N_{0,t}=N_{0,0}=1$).

To evaluate $N_{s,t}$, we may rely on a known formula for the generating function $X_{s,t}=N_{s,t}+O_{s,t}$ of $(s,t)^+$-well-labelled chains
of type A or B (i.e., with or without spine-edges of labels $0-0$). This later generating function 
was indeed computed in \cite{BG08}, with the result
\begin{equation}
X_{s,t}=\frac{[3]_x[s+1]_x[t+1]_x[s+t+3]_x}{[1]_x[s+3]_x[t+3]_x[s+t+1]_x}
\end{equation}
and satisfies the following recursion relation (obtained by decomposing the chain at its first return at label $0$ along the spine)
\begin{equation}
X_{s,t}=1+g\, T_s T_t X_{s,t}+g^2\, T_sT_t X_{s,t}T_{s+1}T_{t+1}X_{s+1,t+1}\ .
\label{eq:recurX}
\end{equation}
Now, by a decomposition of the chain at the spine-edges with labels $0-0$, we may write the relation $X_{s,t}=N_{s,t}/(1-g\, T_{s}T_{t}N_{s,t})$,
or equivalently
\begin{equation}
N_{s,t}=\frac{X_{s,t}}{1+g\, T_{s}T_{t}X_{s,t}}\ .
\label{eq:XtoN}
\end{equation}
Replacing $X_{s,t}$ by its value above, this leads to the particularly simple (and remarkably similar) expression
\begin{equation}
N_{s,t}=\frac{[3]_x[s+2]_x[t+2]_x[s+t+3]_x}{[2]_x[s+3]_x[t+3]_x[s+t+2]_x}\ .
\label{eq:Nst}
\end{equation}
Plugging this latter expression in \eqref{eq:twopointbis}, we get explicitely 
\begin{equation}
 G_{d_{12}}=
 \log\left(\frac{([s+t+1]_x)^3[s+t+3]_x}{[s+t]_x([s+t+2]_x)^3}\right)\quad \text{with}\ s+t=d_{12}\ ,
\end{equation}
which reproduces precisely the previous formula \eqref{eq:twopoint}, as wanted.

The knowledge of the generating function $N_{s,t}$ will be crucial in the derivation 
of the three-point function in the next section. It is interesting to note that its above expression \eqref{eq:Nst}
may be obtained in several alternative ways, without recourse to the known expression for $X_{s,t}$.
These alternative approaches will prove useful when we shall discuss similar generating functions for which we cannot
rely on known formulas.

First, we note that equating {\it ab initio} \eqref{eq:twopoint} and \eqref{eq:twopointbis} provides in return
a constructive way of getting $N_{s,t}$. Indeed, it allows us to write 
\begin{equation}
\frac{N_{s,t}N_{s-1,t-1}}{N_{s-1,t}N_{s,t-1}}=\frac{R_{s+t}}{R_{s+t-1}}\ , \qquad R_{u}\equiv 1\!+g\, T_{u}T_{u+1} = 
\frac{([2]_x)^2}{[1]_x[3]_x} \frac{[u+1]_x[u+3]_x}{([u+2]_x)^2}\ ,
\end{equation}
which is a double (in $s$ and $t$) recursion formula. Together with the conditions $N_{s,0}=N_{0,t}=N_{0,0}=1$, it leads to
\begin{equation}
N_{s,t}=\prod\limits_{u=1}^{s+t}R_u \Big/ \left(\prod\limits_{u=1}^{s}R_u\prod\limits_{u=1}^{t}R_u\right)\ ,
\end{equation}
which yields immediately \eqref{eq:Nst} by replacing $R_u$ by its value.

\begin{figure}[h!]
\begin{center}
\includegraphics[width=10cm]{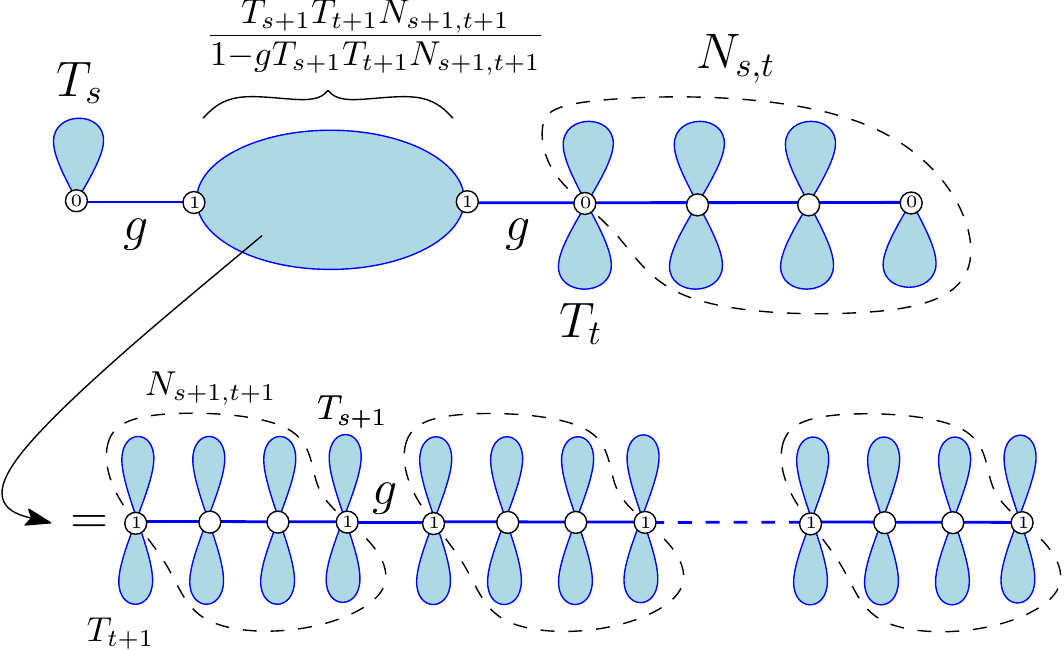}
\end{center}
\caption{A schematic picture of the recursion relation for $N_{s,t}$, obtained by decomposing an $(s,t)^+$-well-labelled chain of type A
at its first return at label $0$ along the spine.} 
\label{fig:nstdecomp}
\end{figure}

A second alternative, but non-constructive way of getting $N_{s,t}$ is by solving yet another recursion relation, which
is the analog of \eqref{eq:recurX},  obtained 
by a simple decomposition of an $(s,t)^+$-well-labelled chain of type A at its first return at label $0$ along the spine. 
This decomposition leads to (see Figure~\ref{fig:nstdecomp})
\begin{equation}
N_{s,t}=1+g^2 T_s T_t N_{s,t} \frac{T_{s+1}T_{t+1}N_{s+1,t+1}}{1-g\, T_{s+1}T_{t+1}N_{s+1,t+1}}\ .
\label{eq:recurN}
\end{equation}
Indeed, when not reduced to a single vertex (weight $1$), an $(s,t)^+$-well-labelled chain of type A has
a spine made of a first $0-1$ edge (weight $g\,T_s$, including the attached subtree), a portion of spine with labels larger than or equal to $1$, then
a first $1-0$ edge (weight $g\, T_t$) and a final portion which is itself an $(s,t)^+$-well-labelled chain of type A
(weight $N_{s,t}$). The portion of spine with labels larger than or equal to $1$ is, after a simple shift of labels by $-1$,
an $(s+1,t+1)^+$-well-labelled chain (of arbitrary type A or B) with two extra attached subtrees, as enumerated by $T_{s+1}T_{t+1}X_{s+1}
=T_{s+1}T_{t+1}N_{s+1,t+1}/(1-g\, T_{s+1}T_{t+1}N_{s+1,t+1})$. Note that eq.~\eqref{eq:recurN} may equivalently be obtained 
by simply plugging $X_{s,t}=N_{s,t}/(1-g\, T_{s}T_{t}N_{s,t})$ into \eqref{eq:recurX}.
Knowing the expression of $T_s$, this equation determines entirely $N_{s,t}$ as a power series in $g$. It is now a straightforward 
exercise to check that the above expression \eqref{eq:Nst} does indeed solve this equation (and is such that $N_{s,t}=1+O(g)$)
hence provides the correct expression for $N_{s,t}$. In the following, we shall recourse in several occasions to this method, i.e., write
down a recursion relation and {\it guess} its (unique as a power series in $g$) solution.

We may now easily repeat the above arguments by using instead the bijection (B) of Theorem \ref{theo:bij_bipointed}. For $d_{12}\geq 1$,
 $G_{d_{12}}$ is then identified with the generating function of $(s,t)$-well-labelled maps of type B,
enumerated with a weight $g$ per edge, for any pair of positive integers $s$ and $t$ such that $d_{12}=s+t-1$. 
Now, focusing on the cycle $\Gamma$ formed by their border-vertices and border-edges, $(s,t)^+$-well-labelled maps of type B 
are nothing but cyclic sequences where each elementary block
is made of an edge with labels $0-0$ (weight $g$) followed by an $(s,t)^+$-well-labelled chain of type A with two extra attached subtrees
(weight $T_{s}T_{t}N_{s,t}$). This allows us to write immediately
\begin{equation}
\begin{split}
G_{d_{12}}& =\Delta_s\Delta_t \log\left(\frac{1}{1-g\, T_{s}T_{t}N_{s,t}}\right)=
\Delta_s\Delta_t \log\left(\frac{X_{s,t}}{N_{s,t}}\right) \\
&= \log\left(\frac{X_{s,t}X_{s-1,t-1}N_{s-1,t}N_{s,t-1}}{X_{s-1,t}X_{s,t-1}N_{s,t}N_{s-1,t-1}}\right)\ . \\
\end{split}
\label{eq:twopointter}
\end{equation}
Replacing $X_{s,t}$ and $N_{s,t}$ by their expressions, we deduce
\begin{equation}
 G_{d_{12}}=
 \log\left(\frac{([s+t]_x)^3[s+t+2]_x}{[s+t-1]_x([s+t+1]_x)^3}\right)\quad \text{with}\ s+t-1=d_{12}
\end{equation}
in agreement with \eqref{eq:twopoint}, as wanted.

To end this section, let us finally give an expression for the generating function $O_{s,t}$
of $(s,t)^+$-well-labelled chains of type B. It is obtained via $O_{s,t}=X_{s,t}-N_{s,t}$, from which we deduce
\begin{equation}
O_{s,t}=x\, \frac{[3]_x[s]_x[t]_x([s+t+3]_x)^2}{[2]_x[s+3]_x[t+3]_x[s+t+1]_x [s+t+2]_x}\ .
\label{eq:Ost}
\end{equation}
\subsection{Three-point function of general maps}
\label{sec:genthreepoint}

\begin{figure}[h!]
\begin{center}
\includegraphics[width=10cm]{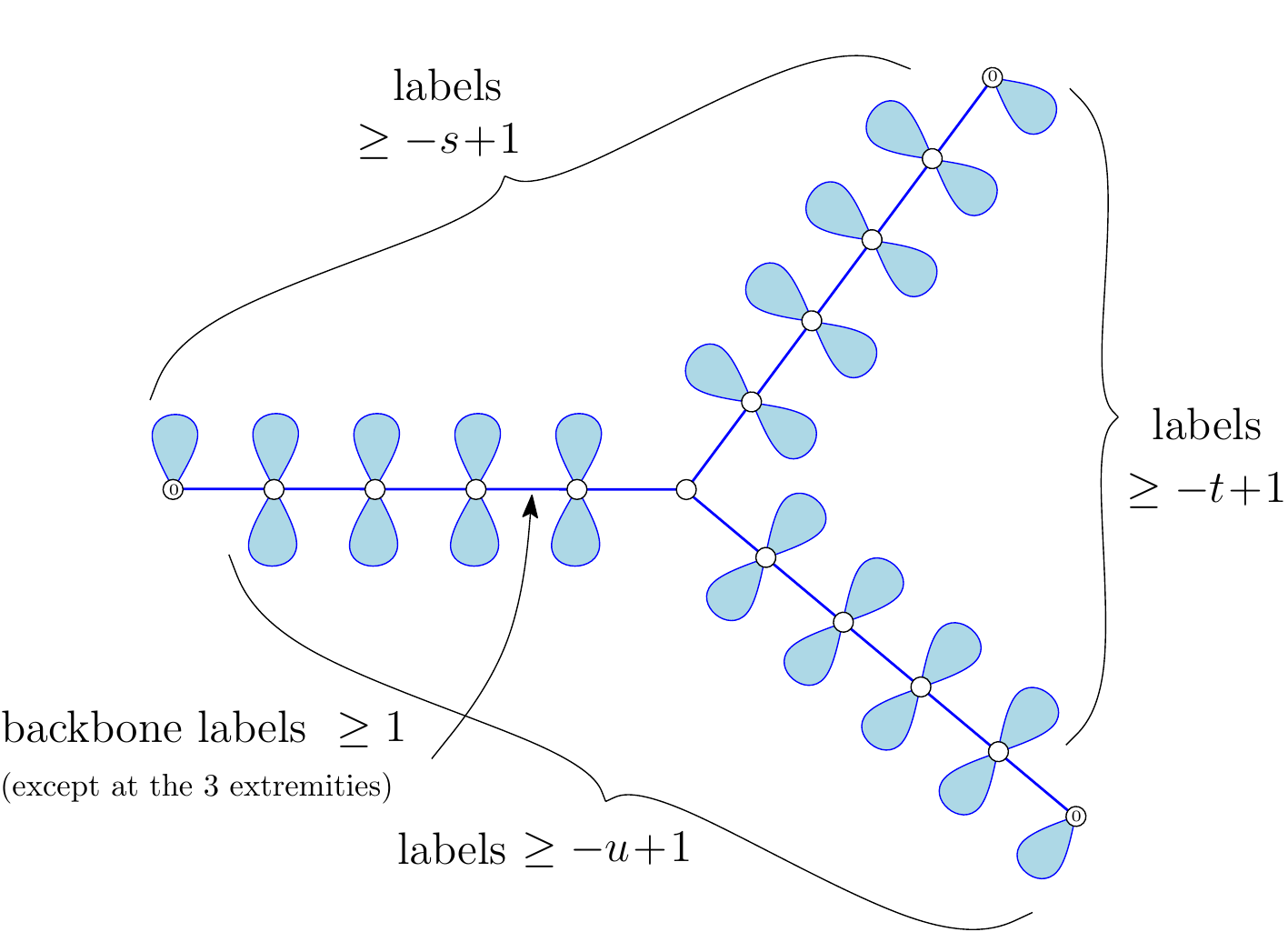}
\end{center}
\caption{Schematic picture of an $(s,t,u)^+$-well-labelled Y-diagram. The blobs represent attached well-labelled subtrees.
The label of the backbone-vertices are $0$ at the extremities of the three branches and positive in-between.} 
\label{fig:Ydiagrams}
\end{figure}

We shall now use the bijections of Theorem \ref{theo:bij_tpointed} to derive an explicit formula for the 
three-point function of general planar maps.
As a new tool, we shall need an expression for the generating function $Y_{s,t,u}\equiv Y_{s,t,u}(g)$ of
$(s,t,u)^+$-well-labelled {\it Y-diagrams} defined as follows, for $s,t,u>0$: by well-labelled Y-diagram, 
we mean a well-labelled tree with a distinguished backbone made of three branches (referred to as the first, second and third branch clockwise) 
of arbitrary lengths connected at a central vertex, with additional subtrees attached to the backbone-vertices 
(see Figure~\ref{fig:Ydiagrams} for an illustration). 
The labels of the backbone vertices are required to be $0$ at the extremities of the three branches and strictly positive in-between. 
We have an $(s,t,u)^+$-Y-diagram if, going clockwise around the backbone, the vertices of the subtrees attached to backbone-corners
lying between the extremity of the first branch (extremity included) and that of the second branch (extremity excluded) have a label 
$\geq -s+1$, the vertices of the subtrees attached to backbone-corners lying between the extremity of the second branch (extremity included) and that of the third branch (extremity excluded) have a label 
$\geq -t+1$ and, finally, the vertices of the subtrees attached to backbone-corners lying between the extremity of the third branch (extremity included) and that of the first branch (extremity excluded) have a label $\geq -u+1$.
Note that an $(s,t,u)^+$- well-labelled Y-diagram cannot contain edges with labels $0-0$ along its three branches since 
only the extremities of the branches carry a label $0$. Note also that the lengths of the three branches may be zero {\it simultaneously}
in with case the backbone of the Y-diagram reduces to a single vertex with label $0$: as before, we then decide for convenience  
not to attach any subtree at all to this vertex and this configuration receives a weight $1$ accordingly. 

The generating function $Y_{s,t,u}$ was already introduced in \cite{BG08}. There it was shown that 
it satisfies the following recursion relation, easily obtained by decomposing each branch of the Y-diagram at its first passage
(starting from the central vertex) at label $1$ along the branch
\begin{equation}
Y_{s,t,u}=1+g^3\, T_s T_t T_u X_{s+1,t+1} X_{s+1,u+1} X_{t+1,u+1}T_{s+1}T_{t+1}T_{u+1}Y_{s+1,t+1,u+1}\ .
\label{eq:recurY}
\end{equation}
Knowing $T_s$ and $X_{s,t}$, this equation determines $Y_{s,t,u}$ entirely as a power series in $g$. 
The following explicit solution was then found in \cite{BG08}
\begin{equation}
Y_{s,t,u}=\frac{[s+3]_x[t+3]_x[u+3]_x[s+t+u+3]_x}{[3]_x[s+t+3]_x[t+u+3]_x[u+s+3]_x}\ .
\label{eq:Ystu}
\end{equation}

\begin{figure}[h!]
\begin{center}
\includegraphics[width=8cm]{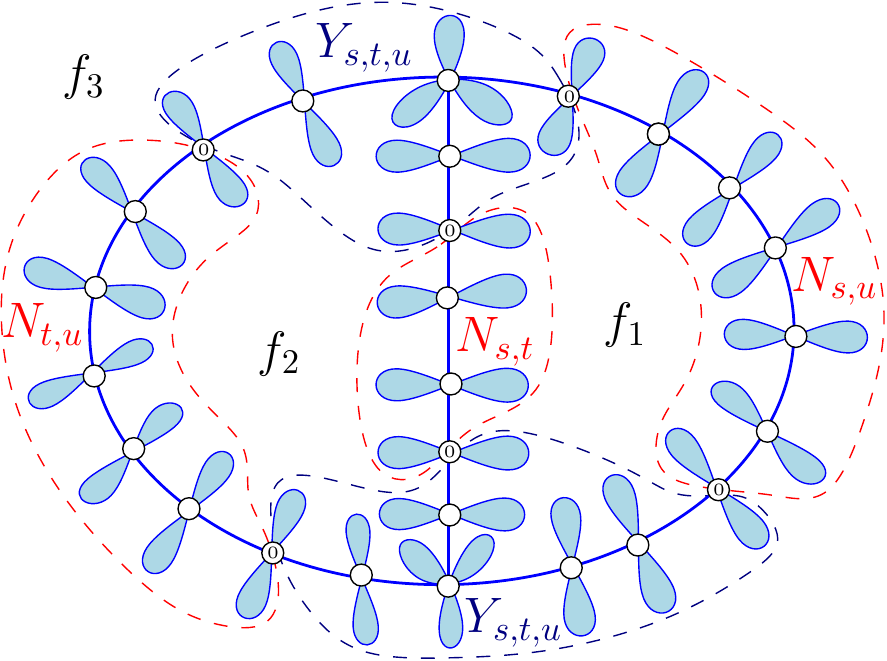}
\end{center}
\caption{Schematic picture of an $(s,t,u)^+$-well-labelled map. The blobs represent attached well-labelled subtrees.
The map is decomposed into five pieces, two of them enumerated by $Y_{s,t,u}$ and the last three by $N_{s,t}$, $N_{s,u}$ and $N_{t,u}$
respectively (see text).} 
\label{fig:stumapdecomp}
\end{figure}

We may now evaluate the three-point function $G_{d_{12},d_{13},d_{23}}$ of general maps.
Let us start with the case where $d_{12}+d_{13}+d_{23}$ is even and $d_{12}$, $d_{13}$ and $d_{23}$ satisfy strict 
triangular inequalities (we will return later to the case where triangular inequalities are not strict). As in the statement of
Theorem \ref{theo:bij_tpointed}-(A), we then set
\begin{equation}
d_{12}=s+t\ , \quad d_{13}=s+u\ , \quad d_{23}=t+u\ ,
\label{eq:dstupair}
\end{equation}
where $s$, $t$ and $u$ are three positive integers.
From the bijection (A) of Theorem \ref{theo:bij_tpointed}, $G_{d_{12},d_{13},d_{23}}$  is identified with the generating function
of $(s,t,u)$-well-labelled maps of type A, which allows us to write
\begin{equation}
G_{d_{12},d_{13},d_{23}}=\Delta_s\Delta_t\Delta_u N_{s,t}N_{s,u}N_{t,u}Y_{s,t,u}^2\ .
\label{eq:threepointfirst}
\end{equation}
Indeed, introducing $(s,t,u)^+$-well-labelled maps of type A (defined as  $(s,t,u)$-well-labelled maps of type A except
for the weaker constraints $\min(f_1)\geq -s+1$, $\min(f_2)\geq -t+1$ and $\min(f_3)\geq -u+1$), these maps may be 
decomposed into five pieces by cutting their ``backbone" at the first and last occurrence of a label $0$ along each  
border between their three faces.  More precisely, introducing as before the three subgraphs $\Gamma_{ij}$ of the map made of vertices and edges incident to
the faces $f_i$ and $f_j$ ($1\leq i<j\leq 3$), $\Gamma_{12}$, $\Gamma_{13}$ and $\Gamma_{23}$ form a backbone made generically of three
chains attached at their extremities to two ``triple-point" vertices (see Figure~\ref{fig:stumapdecomp}). Since 
the map is of type A, each $\Gamma_{ij}$ carries at least a label $0$ by definition. 
Cutting the chains at its first and last occurrence of such a label $0$ (as encountered by going from one triple-point to the other)
results in an $(s,t,u)^+$-well-labeled Y-diagrams and an $(s,u,t)^+$-well-labeled Y-diagrams (both enumerated 
by $Y_{s,t,u}$) and three chains: an $(s,t)^+$-well-labeled chain, an $(s,u)^+$-well-labelled chain
and a $(t,u)^+$-well-labelled chain, {\it all of type A}, hence enumerated by $N_{s,t}$, $N_{s,u}$ and $N_{t,u}$ respectively.
Finally, the passage from $(s,t,u)^+$-well-labelled maps to $(s,t,u)$-well-labelled maps is performed by the action 
of the finite difference operators $\Delta_s$, $\Delta_t$ and $\Delta_u$. As explained in \cite{BG08} (in a similar
calculation for quadrangulations),
degenerate situations where the border between two of the three faces is reduced to a single
vertex (so that the two triple-points coalesce) are properly enumerated by this formula. For instance, configurations whose $(1,2)$-border
is a single vertex are enumerated by $\Delta_s\Delta_t\Delta_u N_{s,u}N_{t,u}$. This contribution to $G_{d_{12},d_{13},d_{23}}$ properly appears in 
\eqref{eq:threepointfirst} by picking the first term $1$ in the expansion of both $N_{s,t}$ and $Y_{s,t,u}$.

Replacing $Y_{s,t,u}$ and $N_{s,t}$ by their explicit expressions, we obtain the following result:
\begin{prop}[Three-point function of general maps: even case]
Given $d_{12}$, $d_{13}$ and $d_{23}$ three positive integers satisfying strict triangular inequalities,
and such that $d_{12}+d_{13}+d_{23}$ is even, the three-point function $G_{d_{12},d_{13},d_{23}}$ 
is given by
\begin{equation}
\begin{split}
& \hskip -10pt G_{d_{12},d_{13},d_{23}} =\Delta_s\Delta_t\Delta_u  F^{\text{even}}_{s,t,u}\\
& \hskip -10pt F^{\text{even}}_{s,t,u}=\ \frac{[3]_x([s\!+\!2]_x[t\!+\!2]_x[u\!+\!2]_x[s\!+\!t\!+\!u\!+\!3]_x)^2}{([2]_x)^3[s\!+\!t+2]_x[t\!+\!u+2]_x[u\!+\!s+2]_x
[s\!+\!t\!+\!3]_x[t\!+\!u\!+\!3]_x[u\!+\!s\!+\!3]_x}\\
& \\
& \hskip 220pt \text{with}\ s,t,u\ \text{as in \eqref{eq:dstupair}}\ .\\
\label{eq:threepointgenpair}
\end{split}
\end{equation} 
\end{prop}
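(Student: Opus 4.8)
The plan is to observe that the combinatorial content of the statement is already contained in equation~\eqref{eq:threepointfirst}, which expresses $G_{d_{12},d_{13},d_{23}}$ as $\Delta_s\Delta_t\Delta_u$ applied to $N_{s,t}N_{s,u}N_{t,u}Y_{s,t,u}^2$. Since the finite-difference operators act only on the indices, proving the Proposition reduces to the purely algebraic identity
\[
F^{\text{even}}_{s,t,u} = N_{s,t}\,N_{s,u}\,N_{t,u}\,Y_{s,t,u}^2,
\]
which I would establish by substituting the closed forms \eqref{eq:Nst} and \eqref{eq:Ystu} and simplifying.

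First I would expand the product $N_{s,t}N_{s,u}N_{t,u}$ using \eqref{eq:Nst} three times and collect everything over a common denominator. Written as a single ratio of bracket products, its numerator is $([3]_x)^3([s+2]_x[t+2]_x[u+2]_x)^2[s+t+3]_x[s+u+3]_x[t+u+3]_x$ and its denominator is $([2]_x)^3([s+3]_x[t+3]_x[u+3]_x)^2[s+t+2]_x[s+u+2]_x[t+u+2]_x$; note how each single index contributes a factor $[\cdot+2]_x$ to the numerator and $[\cdot+3]_x$ to the denominator with multiplicity two, since each of $s,t,u$ occurs in exactly two of the three $N$-factors. Then I would multiply by $Y_{s,t,u}^2$, whose numerator supplies $([s+3]_x[t+3]_x[u+3]_x)^2([s+t+u+3]_x)^2$ and whose denominator supplies $([3]_x)^2([s+t+3]_x[t+u+3]_x[u+s+3]_x)^2$.

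The decisive step is the cancellation bookkeeping. The squared single-index factors $([s+3]_x[t+3]_x[u+3]_x)^2$ cancel between the $N$-denominator and the $Y^2$-numerator; one factor of $[3]_x$ survives from $([3]_x)^3/([3]_x)^2$; and each pairwise-sum bracket $[s+t+3]_x$, $[s+u+3]_x$, $[t+u+3]_x$, appearing once in the numerator (from the $N$'s) and squared in the denominator (from $Y^2$), ends up to the first power in the denominator. What remains is exactly $F^{\text{even}}_{s,t,u}$. There is no deep obstacle here — the only genuine risk is miscounting a multiplicity — so I would organize the check by grouping brackets into the classes $[\cdot+2]_x$, $[\cdot+3]_x$, pairwise-sum $[\cdot+\cdot+2]_x$ and $[\cdot+\cdot+3]_x$, the triple-sum $[s+t+u+3]_x$, and the constants $[2]_x,[3]_x$, verifying each group in isolation. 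With the identity in hand, the Proposition follows immediately from \eqref{eq:threepointfirst}.
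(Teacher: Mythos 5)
Your proposal is correct and takes essentially the same route as the paper: the paper proves this Proposition exactly by combining the combinatorial identity \eqref{eq:threepointfirst} (established just beforehand via the five-piece decomposition into two Y-diagrams and three type-A chains) with the closed forms \eqref{eq:Nst} and \eqref{eq:Ystu}, leaving the algebraic simplification implicit. Your cancellation bookkeeping --- $([s+3]_x[t+3]_x[u+3]_x)^2$ cancelling between the $N$-denominator and $Y^2$-numerator, a single $[3]_x$ surviving from $([3]_x)^3/([3]_x)^2$, and each bracket $[s+t+3]_x$, $[t+u+3]_x$, $[u+s+3]_x$ ending to the first power in the denominator --- is exactly right and yields $F^{\text{even}}_{s,t,u}$.
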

Let us now discuss the case where triangular inequalities are not strict by setting 
for instance $u=0$ in \eqref{eq:dstupair}, in which case $d_{13}+d_{23}=d_{12}$  and $v_3$ lies on a geodesic path between $v_1$ and $v_2$.
As explained in Remark 3, such tri-pointed maps are in bijection with $(s,t)$-well-labelled maps of type A with a marked 
border-vertex with label $0$. This marking transforms de facto (by a simple cut) the $(s,t)$-well-labelled map of type A 
into an $(s,t)$-well-labelled chain of type A, as enumerated by $\Delta_s\Delta_t N_{s,t}$. We deduce
\begin{equation} 
G_{d_{13}+d_{23},d_{13},d_{23}} =\Delta_s\Delta_t N_{s,t}\quad \text{with}\ s=d_{13}, \  t=d_{23}\ .
\label{eq:threealignedgen}
\end{equation}
Note that this latter expression is precisely that given by \eqref{eq:threepointgenpair} if we formally set 
$F^{\text{even}}_{s,t,-1}=0$ so that $\Delta_u F^{\text{even}}_{s,t,u}\vert_{u=0}= F^{\text{even}}_{s,t,0}=N_{s,t}$.  

Let us now come to the case where $d_{12}+d_{13}+d_{23}$ is odd and, as in the statement of Theorem \ref{theo:bij_tpointed}-(B), 
set
\begin{equation}
d_{12}=s+t-1\ , \quad d_{13}=s+u-1\ , \quad d_{23}=t+u-1
\label{eq:dstuimpair}
\end{equation}
where $s$, $t$ and $u$ are positive integers.
From the bijection (B) of Theorem \ref{theo:bij_tpointed}, $G_{d_{12},d_{13},d_{23}}$  is now identified with the generating function 
of $(s,t,u)$-well-labelled maps of type B, which allows us to write
\begin{equation}
G_{d_{12},d_{13},d_{23}}=\Delta_s\Delta_t\Delta_u O_{s,t}O_{s,u}O_{t,u}Y_{s,t,u}^2\ .
\end{equation}
Replacing $Y_{s,t,u}$ and $O_{s,t}$ by their explicit expressions, we deduce
\begin{prop}[Three-point function of general maps: odd case]
Given $d_{12}$, $d_{13}$ and $d_{23}$ three positive integers satisfying triangular inequalities,
and such that $d_{12}+d_{13}+d_{23}$ is odd, the three-point function $G_{d_{12},d_{13},d_{23}}$ 
is given by
\begin{equation}
\begin{split}
& \hskip -10pt G_{d_{12},d_{13},d_{23}} =\Delta_s\Delta_t\Delta_u  F^{\text{odd}}_{s,t,u}\\
& \hskip -10pt F^{\text{odd}}_{s,t,u}=x^3 \frac{[3]_x([s]_x[t]_x[u]_x[s\!+\!t\!+\!u\!+\!3]_x)^2}{([2]_x)^3[s\!+\!t\!+\!1]_x[t\!+\!u\!+\!1]_x[u\!+\!s\!+\!1]_x
[s\!+\!t\!+\!2]_x[t\!+\!u\!+\!2]_x[u\!+\!s\!+\!2]_x}\\
& \\
& \hskip 220pt \text{with}\ s,t,u\ \text{as in \eqref{eq:dstuimpair}}\ .\\
\label{eq:threepointgenimpair}
\end{split}
\end{equation} 
\end{prop}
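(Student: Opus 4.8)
The plan is to read the proposition as an algebraic corollary of the combinatorial decomposition established just above it, namely the identity $G_{d_{12},d_{13},d_{23}}=\Delta_s\Delta_t\Delta_u\, O_{s,t}O_{s,u}O_{t,u}Y_{s,t,u}^2$ (valid under the substitution \eqref{eq:dstuimpair}). First I would recall why this identity holds, by transposing the even-case argument: an $(s,t,u)^+$-well-labelled map of type B has the same backbone $\Gamma_{12}\cup\Gamma_{13}\cup\Gamma_{23}$ joining two triple-point vertices, so cutting at the first and last occurrence of a label $0$ along each border again splits the map into two $(s,t,u)^+$-well-labelled Y-diagrams (each enumerated by $Y_{s,t,u}$, since a Y-diagram carries no $0-0$ edge by definition) and three middle chains. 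The sole change from the even case is that the type B condition forces each $\Gamma_{ij}$ to contain a border-edge of labels $0-0$, and such an edge necessarily lies in the middle-chain region; hence each middle chain is of type B rather than type A, and the three are enumerated by $O_{s,t}$, $O_{s,u}$, $O_{t,u}$ instead of $N_{s,t}$, $N_{s,u}$, $N_{t,u}$. The operator $\Delta_s\Delta_t\Delta_u$ then enforces the passage from the $(s,t,u)^+$ to the $(s,t,u)$ objects exactly as in the even case.

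The core step is then to verify that $F^{\text{odd}}_{s,t,u}\equiv O_{s,t}O_{s,u}O_{t,u}Y_{s,t,u}^2$ equals the stated closed form, by substituting \eqref{eq:Ost} three times together with the square of \eqref{eq:Ystu} and collecting powers of each bracket $[n]_x$. The scalar prefactor is immediate: the three $O$'s give $x^3([3]_x)^3/([2]_x)^3$ while $Y_{s,t,u}^2$ contributes $1/([3]_x)^2$, leaving $x^3[3]_x/([2]_x)^3$. The factors $[s]_x^2[t]_x^2[u]_x^2$ and $([s+t+u+3]_x)^2$ survive untouched in the numerator, the latter coming from $Y_{s,t,u}^2$. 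All remaining brackets cancel in matched pairs: the denominators $([s+3]_x)^2([t+3]_x)^2([u+3]_x)^2$ of the $O$'s are cleared by the identical numerator of $Y_{s,t,u}^2$, and the numerators $([s+t+3]_x)^2([t+u+3]_x)^2([u+s+3]_x)^2$ of the $O$'s are cleared by the identical denominator of $Y_{s,t,u}^2$. What remains in the denominator is precisely $[s+t+1]_x[t+u+1]_x[u+s+1]_x[s+t+2]_x[t+u+2]_x[u+s+2]_x$, which reproduces the announced $F^{\text{odd}}_{s,t,u}$.

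I do not anticipate a genuine obstacle here: granting the decomposition identity, the statement is a bracket-bookkeeping exercise in which every $[n]_x$ either survives or annihilates a matching partner, and the full symmetry of $F^{\text{odd}}_{s,t,u}$ under permutations of $s,t,u$ serves as a running consistency check. The only point needing slight care is the combinatorial justification that the middle chains are genuinely of type B, which follows at once from the type B definition that each $\Gamma_{ij}$ carries a $0-0$ border-edge lying in the middle-chain region; it is precisely this replacement of $N$ by $O$ that turns the even-case numerator factors $[s+2]_x[t+2]_x[u+2]_x$ into the odd-case factors $[s]_x[t]_x[u]_x$.
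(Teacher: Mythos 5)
Your proposal is correct and follows the paper's own route: the paper likewise invokes the bijection (B) of Theorem~\ref{theo:bij_tpointed} to identify $G_{d_{12},d_{13},d_{23}}$ with the generating function of $(s,t,u)$-well-labelled maps of type B, writes $G_{d_{12},d_{13},d_{23}}=\Delta_s\Delta_t\Delta_u\,O_{s,t}O_{s,u}O_{t,u}Y_{s,t,u}^2$ by the same cut-at-first-and-last-label-$0$ decomposition as in the even case (with the type B condition forcing the three middle chains to carry a $0$--$0$ spine edge, hence to be enumerated by $O$ rather than $N$), and then substitutes \eqref{eq:Ost} and \eqref{eq:Ystu}. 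Your bracket cancellations reproduce \eqref{eq:threepointgenimpair} exactly, and you actually spell out the decomposition argument in more detail than the paper, which leaves it implicit by reference to the even case.
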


As a simple application of our formulas, let us compute for instance the first terms in the small $g$ expansion
of $G_{2,2,2}$ (three vertices at pairwise distances $d_{12}=d_{13}=d_{23}=2$) and $G_{1,1,1}$ 
(three vertices at pairwise distances $d_{12}=d_{13}=d_{23}=1$). Both are obtained by setting
$s=t=u=1$, respectively in \eqref{eq:threepointgenpair} and \eqref{eq:threepointgenimpair}. 
From the relation between $g$ and $x$ in \eqref{eq:2pgenTi}, we deduce the expansion
\begin{equation}
x= g+7 g^2+59 g^3+544 g^4+5289 g^5+53256 g^6+549771 g^7+5782105 g^8+\ldots
\end{equation}
Plugging this expansion in \eqref{eq:threepointgenpair} and \eqref{eq:threepointgenimpair} yields
\begin{equation}
\begin{split}
G_{2,2,2} &= 2 g^3+39 g^4+558 g^5+7123 g^6+86139 g^7+1011954 g^8+\ldots\\
G_{1,1,1} &= g^3+15 g^4+174 g^5+1867 g^6+19482 g^7+201450 g^8+\ldots \\
\end{split}
\end{equation}
whose first terms (of order $g^3$ and $g^4$) are easily recovered by a simple inspection.

\subsection{Two- and three-point functions of bipartite maps}
\label{sec:biptwothreepoint}
We may easily repeat the arguments of Sects.~\ref{sec:gentwopoint} and \ref{sec:genthreepoint} to obtain the two- and three-point 
functions of bipartite planar maps. In practice, we simply have to consider the same generating functions as above restricted to the subclass
of {\it very-well-labelled} objects. Our first basic ingredient is therefore the generating function 
$\tilde{T}_s\equiv \tilde{T}_s(g)$ ($s>0$) of $(s)^+$-very-well-labelled maps, enumerated with a weight 
$g$ per edge, with a marked corner at a vertex labelled $0$. The following expression for $\tilde{T}_s$ was derived in \cite{GEOD}
\begin{equation}
\begin{split}
&\tilde{T}_s=\tilde{T} \frac{[s]_x[s+4]_x}{[s+1]_x[s+3]_x} \\
&\text{where}\ \ \tilde{T}=\frac{(1+x)^2}{1+x^2}\ \ \text{and}\ \ g=x \frac{1+x^2}{(1+x)^4}\ .
\label{eq:2pbipTi}
\end{split}
\end{equation}
As explained in \cite{BFG}, we may deduce from $\tilde{T}_s$ an explicit formula for the two-point function
of bipartite planar maps
\begin{equation}
 \tilde{G}_{d_{12}}= \log\left(\frac{1+g\, \tilde{T}_{s}\tilde{T}_{s+1}}{1+g \, \tilde{T}_{s-1}\tilde{T}_{s}}\right)
 =\log\left(\frac{([s+1]_x)^2[s+4]_x}{[s]_x([s+3]_x)^2}\right)\quad \text{with}\ s=d_{12}\ .
 \label{eq:twopointbip}
 \end{equation}
Let us now see how to recover this formula in the framework of bi-pointed maps.
From Theorem \ref{theo:bij_bipointed}, $\tilde{G}_{d_{12}}$ with $d_{12}=s+t$ is identified with the generating function of 
$(s,t)$-very-well-labelled maps
of type A, enumerated with a weight $g$ per edge. Note that being of type A simply amounts here to demanding 
that the minimum label over all border vertices in the map is $0$, and that there are no $(s,t)$-very-well-labelled maps
of type B. As for $(s,t)^+$-very-well-labelled chains, they are automatically of type A: if we insist in defining the very-well-labelled 
analogs $\tilde{N}_{s,t}$, $\tilde{O}_{s,t}$ and $\tilde{X}_{s,t}$ of $N_{s,t}$,
$O_{s,t}$ and $N_{s,t}$, we must then set  
\begin{equation}
\tilde{N}_{s,t}=\tilde{X}_{s,t}\quad \text{and}\quad \tilde{O}_{s,t}=0\ 
\end{equation}
so that, in practice, we only have to deal with a single generating function $\tilde{X}_{s,t}$, that of $(s,t)^+$-very-well labelled chains.
By the same argument as in Sect.~\ref{sec:gentwopoint},
we may now write the two-point function as
\begin{equation}
\tilde{G}_{d_{12}}=\Delta_s\Delta_t \log(\tilde{X}_{s,t})=\log\left(\frac{\tilde{X}_{s,t}\tilde{X}_{s-1,t-1}}{\tilde{X}_{s-1,t}\tilde{X}_{s,t-1}}\right)\quad \text{with}\ s+t=d_{12}\ .
\label{eq:twopointbipbis}
\end{equation}
Comparing with \eqref{eq:twopointbip}, this leads to the identification
\begin{equation}
\hskip -10.pt \tilde{X}_{s,t}=\prod\limits_{u=1}^{s+t}\tilde{R}_u \Big/ \left(\prod\limits_{u=1}^{s}\tilde{R}_u\prod\limits_{u=1}^{t}\tilde{R}_u\right)
\ , \quad \tilde{R}_{u}\equiv 1\!+g\, \tilde{T}_{u}\tilde{T}_{u+1} =\frac{[2]_x[3]_x}{[1]_x[4]_x}\frac{[u+1]_x[u+4]_x}{[u+2]_x[u+3]_x}
\end{equation}
hence to the explicit expression
\begin{equation}
\tilde{X}_{s,t}=\frac{[4]_x[s+2]_x[t+2]_x[s+t+4]_x}{[2]_x[s+4]_x[t+4]_x[s+t+2]_x}\ .
\label{eq:tildeXst}
\end{equation}
As a check of consistency, we now argue that $\tilde{X}_{s,t}$ is, alternatively, entirely determined as a power series in $g$
by the recursion 
\begin{equation}
\tilde{X}_{s,t}=1+g^2\, \tilde{T}_s \tilde{T}_t \tilde{X}_{s,t}\tilde{T}_{t+1}\tilde{T}_{s+1} \tilde{X}_{s_1,t+1}\ ,
\end{equation}
obtained by decomposing the chain at its first return to $0$ along the spine.
It is a simple exercise to check that \eqref{eq:tildeXst} actually solves this equation, as wanted.

Coming now to the three-point function $\tilde{G}_{d_{12},d_{13},d_{23}}$ of bipartite planar maps, 
we set again (recall that the sum $d_{12}+d_{13}+d_{23}$ is necessarily even in a bipartite map)
\begin{equation}
d_{12}=s+t\ , \quad d_{13}=s+u\ , \quad d_{23}=t+u\ 
\label{eq:dstubip}
\end{equation}
with $s$, $t$ and $u$ positive integers (we assume here strict triangular inequalities).
To get $\tilde{G}_{d_{12},d_{13},d_{23}}$, we now need an expression for the
generating function $\tilde{Y}_{s,t,u}\equiv \tilde{Y}_{s,t,u}(g)$ of 
$(s,t,u)^+$-very-well-labelled Y-diagrams (which form the very-well labelled subclass of $(s,t,u)^+$-well-labelled Y-diagrams).
Since no expression was known so far for $\tilde{Y}_{s,t,u}$, we had to recourse
to the same guessing approach as in \cite{BG08}.
It is easy to write down a recursion relation for 
 $\tilde{Y}_{s,t,u}$ of the same type as \eqref{eq:recurY}, obtained again by decomposing each 
 branch of an $(s,t,u)^+$-very-well-labelled Y-diagram at its first passage
at label $1$. It is in practice the same as \eqref{eq:recurY} with $T_s$ and $X_{s,t}$
replaced by their tilde counterparts, i.e.,
\begin{equation}
\tilde{Y}_{s,t,u}=1+g^3\, \tilde{T}_s \tilde{T}_t \tilde{T}_u \tilde{X}_{s+1,t+1} \tilde{X}_{s+1,u+1} \tilde{X}_{t+1,u+1}\tilde{T}_{s+1}\tilde{T}_{t+1}\tilde{T}_{u+1}\tilde{Y}_{s+1,t+1,u+1}
\label{eq:recurYtilde}
\end{equation}
and determines $\tilde{Y}_{s,t,u}$ entirely as a power series in $g$. We have been able to guess the 
solution of this equation, which has the slightly more involved expression (now a sum of two terms)
\begin{equation}
\begin{split}
& \hskip -10pt \tilde{Y}_{s,t,u}=\frac{[s+4]_x[t+4]_x[u+4]_x}{[3]_x[4]_x[s+2]_x[t+2]_x[u+2]_x[s+t+4]_x[t+u+4]_x[u+s+4]_x}\times \\
& \\
&  \hskip -2pt \times (x [3]_x[s\!+\!1]_x[t\!+\!1]_x[u\!+\!1]_x[s\!+\!t\!+\!u\!+\!5]_x\!+\![1]_x[s\!+\!3]_x[t\!+\!3]_x[u\!+\!3]_x[s\!+\!t\!+\!u\!+\!3]_x)\ .\\
\end{split}
\end{equation}
By the same argument as in Sect.~\ref{sec:genthreepoint}, we obtain directly:
\begin{prop}[Three-point function of bipartite maps]
Given $d_{12}$, $d_{13}$ and $d_{23}$ three positive integers satisfying strict triangular inequalities,
and such that $d_{12}+d_{13}+d_{23}$ is even, the three-point function $\tilde{G}_{d_{12},d_{13},d_{23}}$ 
is given by
\begin{equation}
\begin{split}
&\hskip -10pt \tilde{G}_{d_{12},d_{13},d_{23}}=\Delta_s\Delta_t\Delta_u \tilde{F}_{s,t,u}\\
& \hskip -10pt \tilde{F}_{s,t,u}=\tilde{X}_{s,t}\tilde{X}_{s,u}\tilde{X}_{t,u}\tilde{Y}_{s,t,u}^2\\
& \hskip -10pt = \frac{[4]_x(x [3]_x[s\!+\!1]_x[t\!+\!1]_x[u\!+\!1]_x[s\!+\!t\!+\!u\!+\!5]_x\!+\![1]_x[s\!+\!3]_x[t\!+\!3]_x[u\!+\!3]_x[s\!+\!t\!+\!u\!+\!3]_x)^2}{([2]_x)^3([3]_x)^2[s\!+\!t\!+\!2]_x[t\!+\!u\!+\!2]_x[u\!+\!s\!+\!2]_x[s\!+\!t\!+\!4]_x[t\!+\!u\!+\!4]_x[u\!+\!s\!+\!4]_x}\\ 
& \\
& \hskip 220pt \text{with}\ s,t,u\ \text{as in \eqref{eq:dstubip}}\ .
\end{split}
\label{eq:threepointbip}
\end{equation}
\end{prop}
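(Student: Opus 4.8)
The plan is to mirror, step by step, the derivation of the general three-point function in Section~\ref{sec:genthreepoint}, systematically replacing each generating function by its very-well-labelled counterpart. Since $d_{12}+d_{13}+d_{23}$ is automatically even in the bipartite setting, only case (A) of Theorem~\ref{theo:bij_tpointed} occurs, and its bipartite clause identifies tri-pointed bipartite maps with the prescribed distances \eqref{eq:dstubip} with $(s,t,u)$-very-well-labelled maps of type A. No type B contribution can arise, since a very-well-labelled map has no border-edge of labels $0-0$. Hence $\tilde{G}_{d_{12},d_{13},d_{23}}$ is exactly the generating function of $(s,t,u)$-very-well-labelled maps of type A, counted with weight $g$ per edge.

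First I would weaken the face-constraints to obtain $(s,t,u)^+$-very-well-labelled maps of type A and apply the backbone decomposition of Figure~\ref{fig:stumapdecomp}: cutting the three borders $\Gamma_{12},\Gamma_{13},\Gamma_{23}$ at the first and last occurrence of a label $0$ along each (as encountered going from one triple-point to the other) splits such a map into two very-well-labelled Y-diagrams, each enumerated by $\tilde{Y}_{s,t,u}$ (which is symmetric in its three indices), and three very-well-labelled chains enumerated by $\tilde{N}_{s,t},\tilde{N}_{s,u},\tilde{N}_{t,u}$. Because $\tilde{N}_{s,t}=\tilde{X}_{s,t}$ in the bipartite case, the generating function of $(s,t,u)^+$-very-well-labelled maps of type A equals $\tilde{X}_{s,t}\tilde{X}_{s,u}\tilde{X}_{t,u}\tilde{Y}_{s,t,u}^2=\tilde{F}_{s,t,u}$, and applying $\Delta_s\Delta_t\Delta_u$ to pass from the $(s,t,u)^+$ objects to the $(s,t,u)$ objects yields the first asserted identity $\tilde{G}_{d_{12},d_{13},d_{23}}=\Delta_s\Delta_t\Delta_u\,\tilde{F}_{s,t,u}$. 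As in \cite{BG08}, the degenerate configurations where one border shrinks to a single triple-point vertex are correctly weighted by this formula (their contributions arise by selecting the constant term $1$ in the relevant $\tilde{X}$ and $\tilde{Y}$ factors); the only genuine point to check is that the very-well-labelled property is inherited by each of the five pieces, which is immediate as it is a purely local, edge-by-edge condition.

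To reach the explicit closed form for $\tilde{F}_{s,t,u}$, the one missing input is the formula for $\tilde{Y}_{s,t,u}$, which I would validate non-constructively. I would substitute the displayed guessed two-term expression for $\tilde{Y}_{s,t,u}$, together with \eqref{eq:2pbipTi} for $\tilde{T}_s$ and \eqref{eq:tildeXst} for $\tilde{X}_{s,t}$, into the recursion \eqref{eq:recurYtilde} and check that it holds identically while matching $\tilde{Y}_{s,t,u}=1+O(g)$; this pins down the unique power-series solution. Since $g$ is rational in $x$ via \eqref{eq:2pbipTi} and every factor is a bracket $[k]_x=1-x^k$, the verification reduces, after clearing denominators, to a single polynomial identity in $x$. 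I would then form the product $\tilde{X}_{s,t}\tilde{X}_{s,u}\tilde{X}_{t,u}\tilde{Y}_{s,t,u}^2$: the brackets $[s+2]_x,[t+2]_x,[u+2]_x$ of the $\tilde{X}$'s cancel the squared such brackets in the denominator of $\tilde{Y}_{s,t,u}^2$, the brackets $[s+4]_x,[t+4]_x,[u+4]_x$ in the $\tilde{X}$-denominators cancel those in the numerator of $\tilde{Y}_{s,t,u}^2$, one power of each $[s+t+4]_x$-type bracket cancels, and one factor $[4]_x$ survives in the numerator, so that the surviving factors $([2]_x)^3,([3]_x)^2$ and the $[s+t+2]_x$- and $[s+t+4]_x$-type brackets assemble into precisely the displayed expression for $\tilde{F}_{s,t,u}$, with its two-term factor squared.

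The main obstacle is the verification that the guessed $\tilde{Y}_{s,t,u}$ solves \eqref{eq:recurYtilde}. Unlike the general case \eqref{eq:Ystu}, where $Y_{s,t,u}$ is a single ratio of bracket-products, the bipartite solution is a sum of two bracket-products, so clearing denominators in the recursion produces a substantially larger polynomial identity in $x$ whose cancellations are not visible term-by-term and are most safely confirmed by symbolic computation. By contrast, the combinatorial decomposition of the second paragraph is routine once the general case of Section~\ref{sec:genthreepoint} is established.
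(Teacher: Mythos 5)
Your proposal is correct and follows essentially the same route as the paper: the identification of $\tilde{G}_{d_{12},d_{13},d_{23}}$ with the generating function of $(s,t,u)$-very-well-labelled maps of type A via Theorem~\ref{theo:bij_tpointed}(A) (type B being vacuous since a very-well-labelled map has no $0-0$ edges, whence $\tilde{N}_{s,t}=\tilde{X}_{s,t}$), the five-piece backbone decomposition giving $\tilde{X}_{s,t}\tilde{X}_{s,u}\tilde{X}_{t,u}\tilde{Y}_{s,t,u}^2$ followed by $\Delta_s\Delta_t\Delta_u$, and the guess-and-verify determination of $\tilde{Y}_{s,t,u}$ as the unique power-series solution of \eqref{eq:recurYtilde} are precisely the paper's argument. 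Your closing remarks on the degenerate (coalesced triple-point) configurations and on the heavier polynomial verification forced by the two-term form of $\tilde{Y}_{s,t,u}$ accurately reflect the points the paper itself flags.
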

As before, the case where triangular inequalities are not strict requires a special attention: setting again $u=0$ for instance, we now
arrive at
\begin{equation} 
\tilde{G}_{d_{13}+d_{23},d_{13},d_{23}} =\Delta_s\Delta_t \tilde{X}_{s,t}\quad \text{with}\ s=d_{13}, \  t=d_{23}\ .
\label{eq:bipaligned}
\end{equation}
Again this latter expression coincides with that given by \eqref{eq:threepointbip} if we formally set 
$\tilde{F}_{s,t,-1}=0$ so that $\Delta_u \tilde{F}_{s,t,u}\vert_{u=0}= \tilde{F}_{s,t,0}=\tilde{X}_{s,t}$. 

A simple application of \eqref{eq:threepointbip} is the small $g$ expansion
of $\tilde{G}_{2,2,2}$ (three vertices at pairwise distances $d_{12}=d_{13}=d_{23}=2$) obtained by setting
$s=t=u=1$ in \eqref{eq:threepointbip}. 
From the relation between $g$ and $x$ in \eqref{eq:2pbipTi}, we deduce the expansion
\begin{equation}
x=g+4 g^2+21 g^3+124 g^4+782 g^5+5144 g^6+34845 g^7+241196 g^8+\ldots
\end{equation}
Plugging this expansion in \eqref{eq:threepointbip} yields
\begin{equation}
\tilde{G}_{2,2,2} =2 g^3+21 g^4+174 g^5+1336 g^6+9942 g^7+72966 g^8 \ldots\\
\end{equation}
whose first terms (of order $g^3$ and $g^4$) are easily recovered by inspection.

\section{Bivariate two- and three-point functions}
\label{sec:bivgenfunc}
We can refine our analysis of the two- and three-point functions by keeping track of both the numbers
of edges and faces of the maps. More precisely, we may compute the {\it bivariate} two- and three-point
functions $G_{d_{12}}(g,z)$ and $G_{d_{12},d_{13},d_{23}}(g,z)$ for general planar maps (and their tilde analogs for
bipartite maps) enumerated with both a weight $g$ per
edge and a weight $z$ {\it per face} (and with a factor $1/k$ in case of $k$-fold symmetry). As we did before in the univariate case, 
we shall omit in the following the arguments $g$ and $z$ in all the encountered generating functions, and write for instance
$G_{d_{12}}$ and $G_{d_{12},d_{13},d_{23}}$ for short. All the generating functions discussed in this section are implicitly 
understood as bivariate generating functions, depending on both $g$ and $z$.

\subsection{General maps}
\label{sec:bivgentwothreepoint}
In the various bijections of Sect.~2, the faces of the bi- or tri-pointed maps at hand are in one-to-one
correspondence with local max of the associated well-labelled maps. Recall that a local max is a vertex
whose label is not smaller than that of any of its neighbours. Such a local max will now be assigned an additional
weight $z$.  Our first input is thus, for $s>0$, the generating function $T_s\equiv T_s(g,z)$ of
$(s)^+$-well-labelled maps, enumerated with a weight 
$g$ per edge and a weight $z$ per local max, and with a marked corner at a vertex (the root vertex) labelled $0$. 
As was done in \cite{AmBudd,BFG}, it is useful to also introduce the generating function $U_s\equiv U_s(g,z)$ of 
the same objects but {\it where the root vertex is weighted by $1$} irrespectively of whether or not this vertex is a local max.
Introducing the notation   
\begin{equation}
[s]_{x,\alpha} \equiv 1-\alpha\, x^s\ ,
\label{eq:crochetialpha}
\end{equation}
the following expressions for $T_s$ and $U_s$ were obtained in \cite{AmBudd,BFG}
\begin{equation}
\begin{split}
& T_s =T \frac{[s]_{x,1}[s+3]_{x,\alpha^2}}{[s+1]_{x,\alpha}[s+2]_{x,\alpha}}\ , \quad U_s =U \frac{[s]_{x,1}[s+3]_{x,\alpha}}{[s+1]_{x,1}[s+2]_{x,\alpha}} \ ,\\
& \text{where}\ \ T= \frac{\alpha (1-x)^2 (1+x+\alpha x-6
      \alpha\, x^2+\alpha\, x^3 + \alpha^2\, x^3 + \alpha^2\, x^4)}{(1-\alpha\, x )^3 (1-\alpha\, x^3)}\ , \\ 
& \hskip 32pt U= \frac{1+x+\alpha x-6
      \alpha\, x^2+\alpha\, x^3 + \alpha^2\, x^3 + \alpha^2\, x^4}{(1-\alpha\, x) (1-\alpha\, x^3)}\ , \\
&\hskip 32pt  g = \frac{x (1 - \alpha\, x)^3 (1 - \alpha\, x^3)}{ (1+x+\alpha x-6
      \alpha\, x^2+\alpha\, x^3 + \alpha^2\, x^3 + \alpha^2\, x^4)^2} \ , \\
&\hskip 32pt z = \frac{\alpha (1 - x)^3 (1 - \alpha^2\, x^3)}{
      (1 - \alpha\, x)^3 (1 - \alpha\, x^3)}\ . \\
\label{eq:TiUi}
\end{split}
\end{equation}
The bivariate two-point function was then computed, with result
 \begin{equation}
 G_{d_{12}}= \log\left(\frac{1+g\, U_{s}T_{s+1}}{1+g \, U_{s-1}T_{s}}\right)
 =\log\left(\frac{([s+1]_{x,\alpha})^3[s+3]_{x,\alpha}}{[s]_{x,\alpha}([s+2]_{x,\alpha})^3}\right)\quad \text{with}\ s=d_{12}\ .
 \label{eq:twopointbivariate}
 \end{equation}
Alternatively, the bijection (A) of Theorem \ref{theo:bij_bipointed} allows us to identify $G_{d_{12}}$ with $d_{12}=s+t$ 
with the generating function of $(s,t)$-well-labelled maps of type A,
enumerated with a weight $g$ per edge and $z$ per local max. Introducing the bivariate generating function $N_{s,t}\equiv N_{s,t}(g,z)$ of 
$(s,t)^+$-well-labelled chains of type A (with the convention that the configuration reduced to a single spine vertex receives the weight $1$), 
we note that, as before, $(s,t)^+$-well-labelled maps of type A are simply enumerated by $\log(N_{s,t})$. Indeed, $(s,t)^+$-well-labelled chains of type A and $(s,t)^+$-well-labelled maps of type A correspond again to sequences and cyclic sequences of the same elementary blocks, corresponding to 
$(s,t)^+$-well-labelled chains of type A having no spine label $0$ between the extremities of their spine, now enumerated with the additional weight $z$ per local max. Here it is 
crucial to realize that, upon gluing (linearly or cyclically) the blocks,  {\it the} (local max or not) {\it nature of the gluing vertices is not affected}
since these vertices, with label $0$, are never local max within objects of type A.
This allows us to write as before
\begin{equation}
G_{d_{12}}=\Delta_s\Delta_t \log(N_{s,t})=\log\left(\frac{N_{s,t}N_{s-1,t-1}}{N_{s-1,t}N_{s,t-1}}\right)\quad \text{with}\ s+t=d_{12}\ ,
\label{eq:twopointbisbivariate}
\end{equation}
(with the convention $N_{s,0}=N_{0,t}=N_{0,0}=1$) and, by comparing with the expression \eqref{eq:twopointbivariate}, to obtain the formula
\begin{equation}
N_{s,t}=\frac{[3]_{x,\alpha}[s+2]_{x,\alpha}[t+2]_{x,\alpha}[s+t+3]_{x,\alpha}}{[2]_{x,\alpha}[s+3]_{x,\alpha}[t+3]_{x,\alpha}[s+t+2]_{x,\alpha}}\ .
\label{eq:Nstbivariate}
\end{equation}

\begin{figure}[h!]
\begin{center}
\includegraphics[width=10cm]{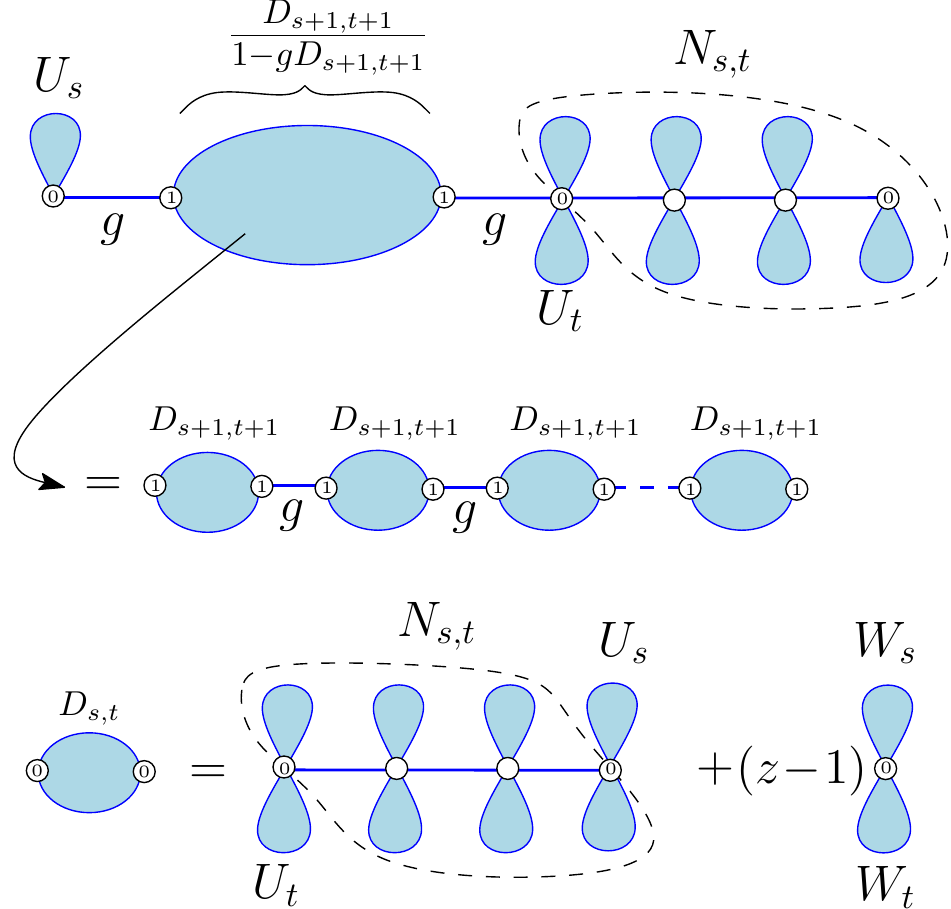}
\end{center}
\caption{Recursion relation for $N_{s,t}$ obtained by decomposing an $(s,t)^+$-well-labelled chain of type A
at its first return at label $0$ along the spine (see text).} 
\label{fig:nstdecompbiv}
\end{figure}

As a test of consistency, we may write down a recursion relation for $N_{s,t}$, the bivariate analog of \eqref{eq:recurN}, illustrated
in Figure~\ref{fig:nstdecompbiv}. It now reads
\begin{equation}
N_{s,t}=1+g^2 U_s U_t N_{s,t} \frac{D_{s+1,t+1}}{1-g\, D_{s+1,t+1}}
\label{eq:recurNbivariate}
\end{equation}
where 
\begin{equation}
D_{s,t}=U_{s}U_{t}N_{s,t}+(z-1)W_sW_t\ , \qquad W_s=\frac{U_s}{1+g\, U_s T_{s+1}}\ .
\label{eq:DstWs}
\end{equation}
Indeed, when not reduced to a single spine vertex (weight $1$), an $(s,t)^+$-well-labelled chain of type A has
a spine made of a first $0-1$ edge (weight $g\,U_s$ with the attached subtree since the vertex labelled $0$ on the spine is not a local max), 
a portion of spine with labels larger than or equal to $1$, then a first $1-0$ edge (weight $g\, U_t$ since the vertex labelled $0$ 
is not a local max) and a final portion which is itself an $(s,t)^+$-well-labelled chain of type A
(weight $N_{s,t}$). The portion of spine with labels larger than or equal to $1$ is, after a simple shift of labels by $-1$,
an $(s+1,t+1)^+$-well-labelled chain of type A or B with two extra attached subtrees. By a simple decomposition of
this chain at each edge with labels $1-1$, it is now enumerated by $D_{s+1,t+1}/(1-g\, D_{s+1,t+1})$ where $D_{s,t}$
enumerates $(s,t)^+$-well-labelled chains of type A with two extra attached trees, with the slight modification that,
when the chain is reduced to a single spine vertex, and when the two extra attached trees are such that
this spine vertex is a local max, then the spine vertex should receive a weight $z$ instead of $1$. 
This leads to the above expression \eqref{eq:DstWs} for $D_{s,t}$ with a first term $U_{s}U_{t}N_{s,t}$
giving a weight $1$ to the chain reduced to a single spine vertex, then corrected by a term $(z-1)W_sW_t$
accounting for the case where the single spine vertex in a local max. Here $W_s$ enumerates
$(s)^+$-well-labelled maps with a marked corner at a vertex (the root vertex) having label $0$ and {\it being a local max}.
By a simple canonical decomposition of an arbitrary $(s)^+$-well-labelled map with a marked corner at a vertex labelled $0$ 
(this map is a planted tree enumerated by $U_s$) by marking each of its descending subtrees with root label $1$ (each
such descending subtree is enumerated by $gT_{s+1}$ and the part between two such subtrees is enumerated by $W_s$), we have 
$U_s=W_s/(1-g\, T_{s+1}W_s)$, hence the expression \eqref{eq:DstWs}  for $W_s$. 

Using the explicit forms \eqref{eq:TiUi}, we arrive at 
\begin{equation}
g\, D_{s,t}=\frac{\alpha\, x\, [1]_{x,1}[s]_{x,1}[t]_{x,1}[s+t+3]_{x,\alpha^2}}{[2]_{x,\alpha}[s+1]_{x,\alpha}[t+1]_{x,\alpha}[s+t+2]_{x,\alpha}}
\end{equation}
and it is a straightforward exercise to check that \eqref{eq:Nstbivariate} actually solves \eqref{eq:recurNbivariate}.

We may alternatively evaluate the two-point function by using instead the bijection (B) of Theorem \ref{theo:bij_bipointed}, writing now
$d_{12}= s+t-1$ and identifying $G_{d_{12}}$ with the bivariate generating function of $(s,t)$-well-labelled maps of type B
enumerated with a weight $g$ per edge and $z$ per local max. As before, $(s,t)^+$-well-labelled maps of type 
B are cyclic sequences where each elementary block
consists of an edge with labels $0-0$ (weight $g$) followed by an $(s,t)^+$-well-labelled chain of type A with two 
extra attached subtrees, as enumerated by $D_{s,t}$ (again if the chain is reduced to a single vertex and 
the two extra attached trees are such that this vertex is a local max, then it should receive the weight $z$).
This leads immediately to
\begin{equation}
\begin{split}
G_{d_{12}}& =\Delta_s\Delta_t \log\left(\frac{1}{1-g\, D_{s,t}}\right) \\
&=\Delta_s\Delta_t \log\left(\frac{[2]_{x,\alpha}[s+1]_{x,\alpha}[t+1]_{x,\alpha}[s+t+2]_{x,\alpha}}{[1]_{x,\alpha}
[s+2]_{x,\alpha}[t+2]_{x,\alpha}[s+t+1]_{x,\alpha}}\right) \\
&= \log\left(\frac{([s+t]_{x,\alpha})^3[s+t+2]_{x,\alpha}}{[s+t-1]_{x,\alpha}([s+t+1]_{x,\alpha})^3}\right)\quad \text{with}\ s+t-1=d_{12}\ . \\
\end{split}
\label{eq:twopointterbivariate}
\end{equation}
in agreement with \eqref{eq:twopointbivariate}, as wanted.

Let us now compute the bivariate three-point function $G_{d_{12},d_{13},d_{23}}=G_{d_{12},d_{13},d_{23}}(g,z)$,
starting with the case where $d_{12}+d_{13}+d_{23}$ is even (and $d_{12}$, $d_{13}$ and $d_{23}$ satisfy strict triangular inequalities), using the parametrization \eqref{eq:dstupair}.
In this case, we have as before
\begin{equation}
G_{d_{12},d_{13},d_{23}}=\Delta_s\Delta_t\Delta_u N_{s,t}N_{s,u}N_{t,u}Y_{s,t,u}^2\ ,
\label{eq:threepointfirstbivariate}
\end{equation}
with $N_{s,t}$ as in \eqref{eq:Nstbivariate} and where $Y_{s,t,u}$ is now the bivariate generating function for 
$(s,t,u)^+$-well-labelled Y-diagrams (here again we use the convention that the Y-diagram reduced to a single backbone
vertex receives the weight $1$).  Once more, it is 
crucial to realize that, upon gluing the different pieces to get an $(s,t,u)^+$-well-labelled  map of type A, the (local max or not) nature of the
gluing vertices is not affected since these vertices, with label $0$, are never local max.
To evaluate $Y_{s,t,u}$, we can write a recursion relation analog to \eqref{eq:recurY}, obtained by decomposing each branch of the 
Y-diagram at its first passage at label $1$ (from the central vertex) along the branch. It reads
\begin{equation}
\begin{split}
&\hskip -10pt Y_{s,t,u}=1+g^3\, U_s U_t U_u \frac{1}{1-g\, D_{s+1,t+1}}\frac{1}{1-g\, D_{s+1,u+1}}\frac{1}{1-g\, D_{t+1,u+1}}\times\\
& \hskip 20pt \times (N_{s+1,t+1} N_{s+1,u+1} N_{t+1,u+1}U_{s+1}U_{t+1}U_{u+1}Y_{s+1,t+1,u+1} \\
& \hskip 170 pt
+(z-1)W_{s+1}W_{t+1}W_{u+1})\ ,\\
\label{eq:recurYbivariate}
\end{split}
\end{equation}
with the following interpretation: going towards the central vertex, each branch is formed of a first $0-1$ edge
(weight $g U_s$, $g\, U_t$ and $g\, U_u$ respectively for the three branches, the three vertices with label
$0$ being not local max) followed by a portion of backbone until we reach the last $1-1$ edge on the branch
(if there is no $1-1$ edge on the branch, this portion is empty).
This portion is enumerated by $1/(1-g\, D_{s+1,t+1})$, $1/(1-g\, D_{s+1,u+1})$ and $1/(1-g\, D_{t+1,u+1})$ respectively on the
three branches. After this last $1-1$ edge (or after the first $0-1$ edge if the portion is empty), the branch continues with a portion without $1-1$ edges,
as enumerated by $U_{t+1}N_{s+1,t+1}$, $U_{u+1}N_{t+1,u+1}$ and $U_{s+1}N_{s+1,u+1}$ respectively until the
vertex with label $1$ closest to the central vertex is reached (note the presence of the terms $U_{t+1}$, $U_{u+1}$ or $U_{s+1}$
accounting for the tree attached to the right of the first vertex in this portion). The remaining part is (by shifting the labels by $-1$)
an $(s+1,t+1,u+1)^+$-well-labelled Y-diagram, enumerated by $Y_{s+1,t+1,u+1}$, hence the first term in the parentheses in \eqref{eq:recurYbivariate}. 
In the above enumeration, we have assumed
that the extremities of the last portion without $1-1$ edge were not local max. This is true except when
the central vertex itself has label $1$, when the last portion without $1-1$ edge is of length $0$ for each branch,
and when the trees attached to the central vertex are such that this vertex has no neighbours with larger labels.
This explains the correction $(z-1)W_{s+1}W_{t+1}W_{u+1}$ in the parentheses.

Although equation \eqref{eq:recurYbivariate} may appear slightly involved, its solution is, remarkably, the simplest possible generalization of
\eqref{eq:Ystu} that we may think of, namely
\begin{equation}
Y_{s,t,u}=\frac{[s+3]_{x,\alpha}[t+3]_{x,\alpha}[u+3]_{x,\alpha}[s+t+u+3]_{x,\alpha}}{[3]_{x,\alpha}[s+t+3]_{x,\alpha}[t+u+3]_{x,\alpha}[u+s+3]_{x,\alpha}}\ .
\end{equation}
Plugging this expression in \eqref{eq:threepointfirstbivariate}, we obtain:
\begin{prop}[Bivariate three-point function of general maps: even case]
Given $d_{12}$, $d_{13}$ and $d_{23}$ three positive integers satisfying strict triangular inequalities,
and such that $d_{12}+d_{13}+d_{23}$ is even, the bivariate three-point function $G_{d_{12},d_{13},d_{23}}$ 
is given by
\begin{equation}
\begin{split}
& \hskip -20pt G_{d_{12},d_{13},d_{23}} =\Delta_s\Delta_t\Delta_u  F^{\text{even}}_{s,t,u}\\
& \hskip -20pt F^{\text{even}}_{s,t,u}=\ \frac{[3]_{x,\alpha}([s\!+\!2]_{x,\alpha}[t\!+\!2]_{x,\alpha}[u\!+\!2]_{x,\alpha}[s\!+\!t\!+\!u\!+\!3]_{x,\alpha})^2}{([2]_{x,\alpha})^3[s\!+\!t\!+\!2]_{x,\alpha}[t\!+\!u\!+\!2]_{x,\alpha}[u\!+\!s\!+\!2]_{x,\alpha}
[s\!+\!t\!+\!3]_{x,\alpha}[t\!+\!u\!+\!3]_{x,\alpha}[u\!+\!s\!+\!3]_{x,\alpha}}\\
& \\
& \hskip 220pt \text{with}\ s,t,u\ \text{as in \eqref{eq:dstupair}}\ .\\
\label{eq:threepointgenpairbivariate}
\end{split}
\end{equation} 
\end{prop}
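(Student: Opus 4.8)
The plan is to assemble the three closed forms already in hand --- the bijective identity \eqref{eq:threepointfirstbivariate}, the formula \eqref{eq:Nstbivariate} for $N_{s,t}$, and the proposed expression for $Y_{s,t,u}$ --- and to reduce the statement to a single bracket cancellation. The only genuinely new input is the closed form for $Y_{s,t,u}$, so the work splits into justifying \eqref{eq:threepointfirstbivariate}, confirming that formula, and then multiplying out.

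First I would re-derive \eqref{eq:threepointfirstbivariate} by carrying the univariate decomposition of Section~\ref{sec:genthreepoint} over to the bivariate weighting. By Theorem~\ref{theo:bij_tpointed}-(A), with $s,t,u$ as in \eqref{eq:dstupair}, $G_{d_{12},d_{13},d_{23}}$ is the generating function of $(s,t,u)$-well-labelled maps of type A counted with a weight $g$ per edge and $z$ per local max. Passing to the weaker $(s,t,u)^+$ objects and recovering the exact minima through $\Delta_s\Delta_t\Delta_u$, I would cut the backbone formed by $\Gamma_{12},\Gamma_{13},\Gamma_{23}$ at the first and last occurrence of a label $0$ along each border, splitting the map into two $(s,t,u)^+$-well-labelled Y-diagrams and three type-A chains. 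The single point needing care in the bivariate setting is that every cutting vertex carries label $0$ and hence, within type-A objects, can never be a local max; therefore the count of local maxima --- and thus the total power of $z$ --- is additive over the five pieces, and no $z$-weight is created or destroyed at the seams. This reproduces \eqref{eq:threepointfirstbivariate}, and the degenerate configurations in which a border reduces to a single vertex (the two triple-points coalescing) are absorbed by the constant terms of $N_{s,t}$ and $Y_{s,t,u}$ exactly as after \eqref{eq:threepointfirst}.

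Second I would pin down $Y_{s,t,u}$. The recursion \eqref{eq:recurYbivariate}, obtained by slicing each branch at label $1$, determines $Y_{s,t,u}$ uniquely as a power series in $g$, since each application strictly lowers the degree while the auxiliary series $U_s$, $D_{s,t}$, $N_{s,t}$ and the correction $(z-1)W_{s+1}W_{t+1}W_{u+1}$ are already known from \eqref{eq:TiUi}, \eqref{eq:Nstbivariate} and \eqref{eq:DstWs}. It therefore suffices to check that the proposed expression has constant term $1$ and satisfies \eqref{eq:recurYbivariate}. I expect this substitution to be the main obstacle: one must rewrite $g\,U_sU_tU_u$, the three geometric factors $1/(1-g\,D)$, and the bracketed sum in terms of the $[\,\cdot\,]_{x,\alpha}$ brackets and verify the resulting rational identity in $x$ and $\alpha$. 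This is the step where the specific parametrization \eqref{eq:TiUi} of $g$ and $z$ must conspire with the bracket structure; every other step is formal.

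Finally I would form $F^{\text{even}}_{s,t,u}=N_{s,t}N_{s,u}N_{t,u}\,Y_{s,t,u}^2$ and simplify. Writing $[k]$ for $[k]_{x,\alpha}$, the three copies of $N$ contribute $[3]^3$, $([s+2][t+2][u+2])^2$ and $[s+t+3][s+u+3][t+u+3]$ to the numerator and $[2]^3$, $([s+3][t+3][u+3])^2$ and $[s+t+2][s+u+2][t+u+2]$ to the denominator, while $Y_{s,t,u}^2$ contributes $([s+3][t+3][u+3])^2[s+t+u+3]^2$ to the numerator and $[3]^2([s+t+3][t+u+3][u+s+3])^2$ to the denominator. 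The factors $([s+3][t+3][u+3])^2$ cancel, one power of $[3]$ survives from $[3]^3/[3]^2$, and the three single brackets $[s+t+3][s+u+3][t+u+3]$ reduce the squared border-brackets $([s+t+3][t+u+3][u+s+3])^2$ to their first power, leaving exactly the stated $F^{\text{even}}_{s,t,u}$; applying $\Delta_s\Delta_t\Delta_u$ then yields the proposition.
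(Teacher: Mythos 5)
Your proposal is correct and follows essentially the same route as the paper: it establishes $G_{d_{12},d_{13},d_{23}}=\Delta_s\Delta_t\Delta_u N_{s,t}N_{s,u}N_{t,u}Y_{s,t,u}^2$ via the type-A backbone decomposition (with the same crucial observation that the label-$0$ gluing vertices are never local max, so the $z$-weight is additive over the five pieces), then characterizes $Y_{s,t,u}$ as the unique power-series solution of the recursion \eqref{eq:recurYbivariate} and verifies the closed form by substitution, exactly as the paper does by guess-and-check. Your final bracket cancellation reproducing $F^{\text{even}}_{s,t,u}$ is also carried out correctly.
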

This formula deals with situations where the $d_{12}$, $d_{23}$ and $d_{31}$ satisfy strict triangular inequalities. If not,
say $d_{13}+d_{23}=d_{12}$, 
one may easily verify that the relation $G_{d_{13}+d_{23},d_{13},d_{23}}=\Delta_s\Delta_t N_{s,t}$ (with $d_{13}=s$ and $d_{23}=t$) 
remains valid in the bivariate case, now with $N_{s,t}$ given by \eqref{eq:Nstbivariate}.

The case where $d_{12}+d_{13}+d_{23}$ is odd requires the bivariate generating function of $(s,t,u)$-well-labelled maps 
of type B, with $s$, $t$ and $u$ as in \eqref{eq:dstuimpair}. The generating function of $(s,t,u)^+$-well-labelled maps 
of type B is easily obtained by decomposing these maps into five pieces upon cutting their backbone at the first and last occurrence 
of an edge with labels $0-0$ along each border between their three faces. This yields
\begin{equation}
\hskip -15pt F_{s,t,u}^{\text{odd}}=g^3\, \frac{1}{1\!-\!g\, D_{s,t}}\frac{1}{1\!-\!g\, D_{s,u}}\frac{1}{1\!-\!g\, D_{t,u}}(N_{s,t} N_{s,u} N_{t,u}U_{s}U_{t}U_{u}Y_{s,t,u}\!+\!(z\!-\!1)W_{s}W_{t}W_{u})^2\ .
\end{equation}
We leave the proof of this formula to the reader, who will recognize the same basic building blocks as in the derivation of \eqref{eq:recurYbivariate}.
This leads to:
\begin{prop}[Bivariate three-point function of general maps: odd case]
Given $d_{12}$, $d_{13}$ and $d_{23}$ three positive integers satisfying triangular inequalities,
and such that $d_{12}+d_{13}+d_{23}$ is odd, the bivariate three-point function $G_{d_{12},d_{13},d_{23}}$ 
is given by
\begin{equation}
\begin{split}
& \hskip -20pt G_{d_{12},d_{13},d_{23}} =\Delta_s\Delta_t\Delta_u  F^{\text{odd}}_{s,t,u}\\
& \hskip -20pt F^{\text{odd}}_{s,t,u}=x^3 \frac{[3]_{x,\alpha}(\alpha\, [s]_{x,1}[t]_{x,1}[u]_{x,1}[s\!+\!t\!+\!u\!+\!3]_{x,\alpha^2})^2}{([2]_{x,\alpha})^3[s\!+\!t\!+\!1]_{x,\alpha}[t\!+\!u\!+\!1]_{x,\alpha}
[u\!+\!s\!+\!1]_{x,\alpha}
[s\!+\!t\!+\!2]_{x,\alpha}[t\!+\!u\!+\!2]_{x,\alpha}[u\!+\!s\!+\!2]_{x,\alpha}}\\
& \\
& \hskip 220pt \text{with}\ s,t,u\ \text{as in \eqref{eq:dstuimpair}}\ .\\
\label{eq:threepointgenimpairbivariate}
\end{split}
\end{equation} 
\end{prop}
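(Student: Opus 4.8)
The plan is to separate the claim into its combinatorial and its algebraic content, and to do almost all of the work on the algebraic side by reusing the $Y$-recursion \eqref{eq:recurYbivariate}. On the combinatorial side, bijection (B) of Theorem \ref{theo:bij_tpointed} identifies tri-pointed maps with $d_{12}+d_{13}+d_{23}$ odd (in the parametrization \eqref{eq:dstuimpair}) with $(s,t,u)$-well-labelled maps of type B; since faces of the tri-pointed map correspond to local maxima, the weight $z$ per face is carried faithfully, and the operators $\Delta_s\Delta_t\Delta_u$ pass from the weakened $(s,t,u)^+$ constraints to the exact ones. This gives $G_{d_{12},d_{13},d_{23}}=\Delta_s\Delta_t\Delta_u F^{\text{odd}}_{s,t,u}$, where $F^{\text{odd}}_{s,t,u}$ is the building-block expression displayed just above the proposition (the five-piece decomposition cutting each border at its first and last $0-0$ edge, whose justification parallels the even case \eqref{eq:threepointfirstbivariate} and the cut underlying \eqref{eq:recurYbivariate}, and which the excerpt grants). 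Thus the genuine content of the proposition is the simplification of that building-block $F^{\text{odd}}_{s,t,u}$ into the closed bracket form of \eqref{eq:threepointgenimpairbivariate}.

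The key device is to avoid expanding the awkward correction $(z-1)W_sW_tW_u$ directly (recall $W_s=U_s/(1+g\,U_sT_{s+1})$). Set $M:=N_{s,t}N_{s,u}N_{t,u}U_sU_tU_uY_{s,t,u}+(z-1)W_sW_tW_u$, which is exactly the parenthesized factor in $F^{\text{odd}}_{s,t,u}$. I would then recognize $M$ as the big factor appearing in \eqref{eq:recurYbivariate}: shifting $(s,t,u)\mapsto(s-1,t-1,u-1)$ there turns the $D$-indices $(s+1,t+1),\dots$ into $(s,t),(s,u),(t,u)$, which match those in $F^{\text{odd}}_{s,t,u}$, and yields
\[
Y_{s-1,t-1,u-1}-1=\frac{g^3\,U_{s-1}U_{t-1}U_{u-1}\,M}{(1-g\,D_{s,t})(1-g\,D_{s,u})(1-g\,D_{t,u})}.
\]
Solving for $M$ and substituting into $F^{\text{odd}}_{s,t,u}=g^3\,\prod(1-g\,D)^{-1}M^2$ collapses all three geometric factors and eliminates both $z$ and $W$, giving the compact identity
\[
F^{\text{odd}}_{s,t,u}=\frac{(Y_{s-1,t-1,u-1}-1)^2\,(1-g\,D_{s,t})(1-g\,D_{s,u})(1-g\,D_{t,u})}{g^3\,(U_{s-1}U_{t-1}U_{u-1})^2},
\]
whose right-hand side involves only quantities with known closed forms: $U_s$ and $g$ from \eqref{eq:TiUi}, the solution $Y_{s,t,u}$ of \eqref{eq:recurYbivariate}, and the explicit expression for $g\,D_{s,t}$ obtained from \eqref{eq:TiUi} and \eqref{eq:DstWs}.

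It then remains to substitute and simplify. A helpful bookkeeping observation is that $1/U_{s-1}=\tfrac1U\,[s]_{x,1}[s+1]_{x,\alpha}/([s-1]_{x,1}[s+2]_{x,\alpha})$ carries a factor $[s]_{x,1}$ in its numerator, so $(U_{s-1}U_{t-1}U_{u-1})^{-2}$ already supplies the factor $([s]_{x,1}[t]_{x,1}[u]_{x,1})^2$ visible in \eqref{eq:threepointgenimpairbivariate}. The crux reduces to two polynomial identities in $x,\alpha$: first that the numerator $[2]_{x,\alpha}[s+1]_{x,\alpha}[t+1]_{x,\alpha}[s+t+2]_{x,\alpha}-\alpha x[1]_{x,1}[s]_{x,1}[t]_{x,1}[s+t+3]_{x,\alpha^2}$ of $1-g\,D_{s,t}$ factors into $[\cdot]_{x,\alpha}$ brackets; and second, most delicately, that the assembled numerator — the product of $(Y_{s-1,t-1,u-1}-1)^2$ with the surviving bracket factors — produces precisely the remaining factor $[s+t+u+3]_{x,\alpha^2}$ (squared) together with the six denominator brackets. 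Both are verifiable by the monomial substitution $x^s=p,\,x^t=q,\,x^u=r$ (checking, e.g., that the relevant symmetric polynomials vanish on the appropriate hypersurfaces), but this is where the real labor sits: the needed factorizations are not termwise, and matching the exact $\alpha^2$-shifted bracket on the nose is the main obstacle. As a sanity check I would specialize to $\alpha=1$ (hence $z=1$), where $[\cdot]_{x,1}=[\cdot]_x$ and the whole expression must collapse to the univariate formula \eqref{eq:threepointgenimpair}.
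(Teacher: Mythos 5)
Your proposal is correct, and on the algebraic side it takes a genuinely different route from the paper. The paper reaches the same building-block formula $F^{\text{odd}}_{s,t,u}=g^3\prod(1-g\,D)^{-1}M^2$, with $M=N_{s,t}N_{s,u}N_{t,u}U_sU_tU_uY_{s,t,u}+(z-1)W_sW_tW_u$, by the same five-piece cut (whose details it likewise leaves to the reader), but then passes to the closed bracket form by direct substitution of all the explicit expressions for $N$, $U$, $W$, $Y$ and $g\,D$ --- it simply says ``This leads to'' the proposition. Your shift trick $(s,t,u)\mapsto(s-1,t-1,u-1)$ in \eqref{eq:recurYbivariate} is slicker: it eliminates $N$, $W$ and $z$ in one stroke, and I have checked that it closes. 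Indeed your identity $F^{\text{odd}}_{s,t,u}=(Y_{s-1,t-1,u-1}-1)^2\,(1-g\,D_{s,t})(1-g\,D_{s,u})(1-g\,D_{t,u})/\bigl(g^3(U_{s-1}U_{t-1}U_{u-1})^2\bigr)$ is exact, and the remaining inputs are cheaper than you fear: the factorization of $1-g\,D_{s,t}$ as $[1]_{x,\alpha}[s+2]_{x,\alpha}[t+2]_{x,\alpha}[s+t+1]_{x,\alpha}/\bigl([2]_{x,\alpha}[s+1]_{x,\alpha}[t+1]_{x,\alpha}[s+t+2]_{x,\alpha}\bigr)$ is already asserted by the paper in \eqref{eq:twopointterbivariate}, so it can be cited rather than re-proved; \eqref{eq:TiUi} gives the clean relation $g\,U^2=x\,[1]_{x,\alpha}/[3]_{x,\alpha}$; and the only genuinely new identity your route needs is the single factorization of the shifted $Y$-numerator
\begin{equation*}
\begin{split}
&[s\!+\!2]_{x,\alpha}[t\!+\!2]_{x,\alpha}[u\!+\!2]_{x,\alpha}[s\!+\!t\!+\!u]_{x,\alpha}-[3]_{x,\alpha}[s\!+\!t\!+\!1]_{x,\alpha}[t\!+\!u\!+\!1]_{x,\alpha}[u\!+\!s\!+\!1]_{x,\alpha}\\
&\hskip 100pt =\alpha\, x^3\, [s\!-\!1]_{x,1}[t\!-\!1]_{x,1}[u\!-\!1]_{x,1}[s\!+\!t\!+\!u\!+\!3]_{x,\alpha^2}\ ,
\end{split}
\end{equation*}
a polynomial identity in $\alpha$ and the monomials $x^s,x^t,x^u$ that is finite to verify (the sign is immaterial since only the square enters); with it, the cancellations happen on the nose --- the $([s-1]_{x,1}[t-1]_{x,1}[u-1]_{x,1})^2$ from this identity cancels against $(U_{s-1}U_{t-1}U_{u-1})^{-2}$, whose $([s]_{x,1}[t]_{x,1}[u]_{x,1})^2$ survives as you observed --- and one recovers \eqref{eq:threepointgenimpairbivariate} exactly, including the prefactor $x^3$ and the $\alpha^2$. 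Two small points of care: the division by $U_{s-1}U_{t-1}U_{u-1}$ is a $0/0$ when any of $s,t,u$ equals $1$, so the manipulation should be read as an identity of rational functions in $x^s,x^t,x^u$ (harmless, since the $[s-1]_{x,1}$ factors above make the quotient regular); and the six denominator brackets in the final formula come from the $1-g\,D$ and $Y$ denominators, not from the assembled numerator --- a minor misattribution in your description that does not affect the argument. The combinatorial half (bijection (B), weight $z$ carried by local max, the $\Delta$ operators, the five-piece cut) coincides with the paper's treatment. What your route buys is a derivation in which every factor is traceable to quantities the paper has already computed, in place of an opaque brute-force simplification; what the paper's route buys is brevity, at the cost of leaving the entire simplification implicit.
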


Again, as a simple application of our formulas, let us revisit the small $g$ expansion
of $G_{2,2,2}$  and $G_{1,1,1}$ .
From the relation between $g$, $z$, $x$ and $\alpha$ in \eqref{eq:TiUi}, we deduce the expansions
\begin{equation}
\begin{split}
& \hskip -10pt x= g\!+\!(2\!+\!5 z) g^2\!+\!(5\!+\!31z\!+\!23z^2) g^3\!+\!(14\!+\!153z\!+\!275 z^2\!+\!102z^3) g^4\!+\\
& \hskip 20pt\!+\!
(42\!+\!696z\!+\!2170z^2\!+\!1938z^3\!+\!443 z^4)g^5\\
&\hskip 20pt \!+\!(132\!+\!3042z\!+\!14212 z^2\!+\!21937 z^3\!+\!12035 z^4\!+\!1898 z^5)g^6+
\!\ldots \\
& \\
& \hskip -10pt \alpha= z\!+\!3z(1\!-\!z) g\!+\!3 z(1\!-\!z)(4\!+\!z) g^2\!+\!z(1\!-\!z)(49\!+\!51z \!+\!4 z^4) g^3 \\
&\hskip 20pt\!+\!3z(1\!-\!z)(67\!+\!150z\!+\!62z^2\!+\!2z^3) g^4\\
&\hskip 20pt\!+\!3z(1\!-\!z)(275\!+\!1038z\!+\!955z^2\!+\!219z^3\!+\!3z^4)g^5\\
&\hskip 20pt\!+\!z(1\!-\!z)(3384\!+\!18965z\!+\!29747z^2\!+\!15651z^3\!+\!2310z^4\!+\!11z^5)g^6
\!+\!\ldots \\
\end{split}
\end{equation}
and consequently
\begin{equation}
\begin{split}
&\hskip -30pt G_{2,2,2} = 2 z g^3\!+\!3 z(4\!+\!9z) g^4\!+\!18z(3\!+\!15z\!+\!13z^2)g^5\!+\!z(220\!+\!1795z\!+\!3453 z^2\!+\!1655 z^3) g^6\!+\!\ldots\\
& \\
&\hskip -30pt G_{1,1,1} = z^2 g^3\!+\!3 z^2(2\!+\!3z) g^4\!+\!3z^2(9\!+\!30z\!+\!19z^2) g^5\!+\!z^2(110\!+\!600z\!+\!845 z^2\!+\!312 z^3)g^6\!+\!\ldots \\
\end{split}
\end{equation}
whose first terms (of order $g^3$ and $g^4$) may be checked by a simple inspection.

\begin{figure}[h!]
\begin{center}
\includegraphics[width=5cm]{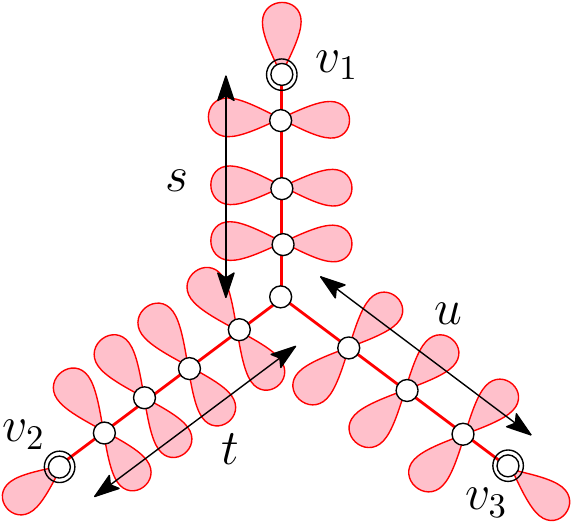}
\end{center}
\caption{Schematic picture of a tri-pointed tree with pairwise distances $d_{12}=s+t$, $d_{13}=s+u$ and $d_{23}=t+u$. The blobs
represent attached subtrees.} 
\label{fig:treeconfig}
\end{figure}

It is interesting to look at the $z\to 0$ limit of our three-point function. From \eqref{eq:TiUi},
this limit is reached by letting $\alpha \to 0$, in which case
\begin{equation}
g=\frac{x}{(1+x)^2}+O(\alpha)\ , \qquad z=(1-x)^3 \alpha +O(\alpha^2)\ .
\end{equation} 
Expanding $F_{s,t,u}^{\text{even}}$ at first order in $\alpha$, we find
\begin{equation}
\hskip -10pt F_{s,t,u}^{\text{even}}=\frac{x^2 \left(3\!-\!x\!-\!2x^s\!-\!2x^t\!-\!2x^u\!+(1\!+\!x)(x^{s+t}\!+\!x^{t+u}\!+\!x^{u+s})-\!2x^{s+t+u+1}\right)}{(1-x)^3}
\, z+O(z^2)
\end{equation}
so that, from \eqref{eq:threepointgenpairbivariate}, in the case where $d_{12}+d_{13}+d_{23}$ is even
\begin{equation}
\begin{split}
& G_{d_{12},d_{13},d_{23}} =2\, x^{s+t+u}\, z+O(z^2)=2\, x^{\frac{d_{12}+d_{13}+d_{23}}{2} }\, z+O(z^2)\\
&\text{with}\ \  x= g\, \left( \frac{1-\sqrt{1-4g}}{2g}\right)^2\ .\\
\label{eq:threepointzzeropair}
\end{split}
\end{equation} 
This corresponds, as it should, to the {\it three-point function of planar trees} enumerated with a weight $g$ per edge.
It enumerates tri-pointed trees of the form displayed in Figure~\ref{fig:treeconfig}, with a backbone made of three branches of 
respective lengths $s$, $t$ and $u$ as in \eqref{eq:dstupair}. A weight $g$ is attached to each backbone edge
and a weight $\text{Cat}(g)=(1-\sqrt{1-4g})/(2g)$ (the Catalan function) to each backbone corner, to account for the possible
subtrees attached to the backbone. This leads to the above formula since the backbone has $s+t+u$ edges
and $2(s+t+u)$ corners. The prefactor $2$ in \eqref{eq:threepointzzeropair} comes from the two possible (clockwise or counterclockwise) 
cyclic orders in which the three distinguished vertices $v_1$, $v_2$ and $v_3$ may appear in the plane.

\begin{figure}[h!]
\begin{center}
\includegraphics[width=6.cm]{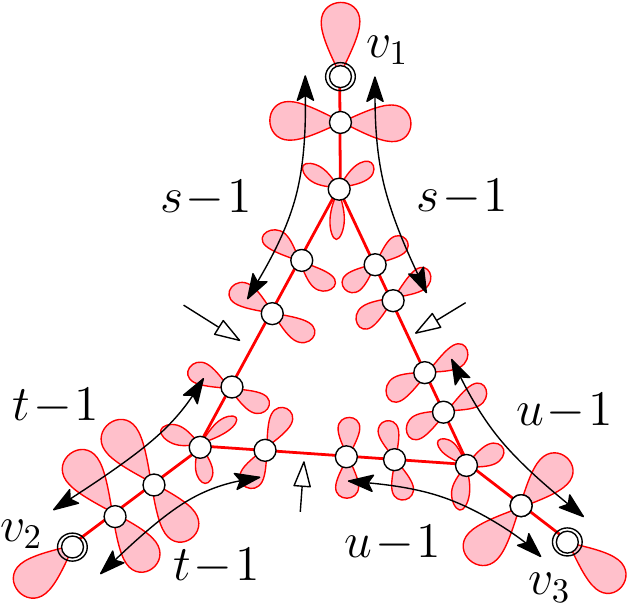}
\end{center}
\caption{Schematic picture of a tri-pointed map with two faces and pairwise distances $d_{12}=s+t-1$, $d_{13}=s+u-1$ and $d_{23}=t+u-1$.
The blobs represent attached subtrees. Suppressing the three edges marked by an open arrow cuts the map into three parts, corresponding 
to those backbone edges and vertices at distance $\leq s-1$ from vertex $v_1$ (resp.\ $\leq t-1$ from $v_2$,  $\leq u-1$ from $v_3$). 
Each of these parts is partially unzipped, the zipped portion lying either inside or outside the open triangular part.} 
\label{fig:triangleconfig}
\end{figure}

If $d_{12}+d_{13}+d_{23}$ is odd, we obtain from
\begin{equation}
F_{s,t,u}^{\text{odd}}=\frac{x^3 \left((1-x^s)(1-x^t)(1-x^u)\right)^2}{(1-x)^6}
\, z^2+O(z^3)
\end{equation}
the leading order in $z$
\begin{equation}
\begin{split}
& \hskip -20pt G_{d_{12},d_{13},d_{23}} =x^{s+t+u}\frac{(2-x^s-x^{s-1})(2-x^t-x^{t-1})(2-x^u-x^{u-1})}{(1-x)^3}\, z^2+O(z^3)\\
& \text{with}\ \  x= g\, \left( \frac{1-\sqrt{1-4g}}{2g}\right)^2\ \ \text{and}\ \ s,t,u\ \text{as in \eqref{eq:dstuimpair}}\ .\\
\label{eq:threepointzzeroimpair}
\end{split}
\end{equation} 
This now corresponds, as it should, to the enumeration of configurations of tri-pointed maps having two faces, 
which is the minimal number of faces in this case since the map cannot be bipartite. As displayed in Figure~\ref{fig:triangleconfig},
such configurations are characterized by a backbone which forms a ``geodesic triangle" between the three marked vertices and is made
of an open triangular part with three attached ``legs". The backbone is then completed by subtrees
attached to all its corners and, as in the even case, the number of corners of the backbone is twice its number
of edges so the correct enumeration is performed by simply assigning a weight $x=g\, \text{Cat}^2(g)$ to each backbone edge. 
The lengths of the different portions of the backbone are constrained by the pairwise distances and we may distinguish three parts:
a part made of those backbone-edges and vertices at distance less than or equal to $s-1$ from vertex $v_1$,
a part made of those backbone-edges and vertices at distance less than or equal to $t-1$ from vertex $v_2$,
a part made of those backbone-edges and vertices at distance less than or equal to $u-1$ from vertex $v_3$.
The backbone is then the union of these three parts and of three remaining edges. Each part is partially unzipped, the zipped portion 
corresponding to one of the legs of the backbone. This allows us to write the generating function as $x^3 Z_s Z_t Z_u$ where
$Z_s$ properly enumerates the first part. If we call $i$ the length of its zipped portion, we find 
\begin{equation}
Z_s=\sum_{i=0}^{s-1} (2-\delta_{i,0}) x^{i+2(s-1-i)}=x^{s-1}\frac{2-x^s-x^{s-1}}{1-x}\ .
\end{equation}
Note the factor of $2$ whenever $i>0$. Indeed, having distinguished the three marked vertices, 
we may canonically differentiate between the two faces in the map and call one, say the interior
and the other the exterior. Whenever the zipped portion has a non-zero length, we then must decide 
whether it lies in the interior or the exterior. There is no such choice for $i=0$. 
With this expression of $Z_s$, we recover the formula \eqref{eq:threepointzzeroimpair}.

\subsection{Bipartite maps}
\label{sec:bivbiptwothreepoint}
Let us now come to the bivariate two- and three-point functions $\tilde{G}_{d_{12}}(g,z)$ and $\tilde{G}_{d_{12},d_{13},d_{23}}(g,z)$
of bipartite planar maps. As a first building block, we consider, for $s>0$, the bivariate generating function 
$\tilde{T}_s\equiv \tilde{T}_s(g,z)$ of $(s)^+$-very-well-labelled maps, enumerated with a weight 
$g$ per edge, $z$ per local max, and with a marked corner at a vertex labelled $0$. As before, we also introduce
the generating function $\tilde{U}_s=\tilde{U}_s(g,z)$ of the same objects, but with a weight $1$ for the root vertex even
if it is a local max. Expressions for $\tilde{T}_s$ and $\tilde{U}_s$ were given in \cite{BFG}
\begin{equation}
\begin{split}
& \tilde{T_s}=\tilde{T} \frac{[s]_{x,1}[s+4]_{x,\alpha^2}}{[s+1]_{x,\alpha}[s+3]_{x,\alpha}}\ , \quad \tilde{U_s}=\tilde{U} \frac{[s]_{x,1}[s+4]_{x,\alpha}}{[s+1]_{x,1}[s+3]_{x,\alpha}} \ ,\\
& \text{where}\ \ \tilde{T}= \frac{\alpha (1-x^2)^2 (1-\alpha\, x^2)}{(1-\alpha\, x )^2 (1-\alpha\, x^4)}\ , \\ 
& \hskip 32pt \tilde{U}= \frac{(1+x)\, (1-\alpha\, x^2)^2}{(1-\alpha\, x) (1-\alpha\, x^4)}\ , \\
&\hskip 32pt  g = \frac{x (1 - \alpha\, x)^2 (1 - \alpha\, x^4)}{ (1+x)^2(1-\alpha\, x^2)^3} \ , \\
&\hskip 32pt z = \frac{\alpha (1 - x)^2 (1-x^2)(1 + \alpha\, x^2)}{(1 - \alpha\, x)^2 (1 - \alpha\, x^4)}\ . \\
\label{eq:TiUitilde}
\end{split}
\end{equation}
As explained in \cite{BFG},  the two-point function of bipartite planar maps is then
\begin{equation}
 \tilde{G}_{d_{12}}= \log\left(\frac{1+g\, \tilde{U}_{s}\tilde{T}_{s+1}}{1+g \, \tilde{U}_{s-1}\tilde{T}_{s}}\right)
 =\log\left(\frac{([s+1]_{x.\alpha})^2[s+4]_{x,\alpha}}{[s]_{x,\alpha}([s+3]_{x,\alpha})^2}\right)\quad \text{with}\ s=d_{12}\ .
 \label{eq:twopointbipbivariate}
 \end{equation}
This formula may be recovered via Theorem \ref{theo:bij_bipointed}, as $\tilde{G}_{d_{12}}$ is also, for any pair of positive $s$ and $t$
with $s+t=d_{12}$, the generating function of $(s,t)$-very-well-labelled maps of type A, enumerated with a weight $g$ per edge and $z$ per local max. 
Consider the bivariate generating function $\tilde{X}_{s,t}\equiv \tilde{X}_{s,t}(g,z)$
of $(s,t)^+$- very-well-labelled chains, we now have again the relation \eqref{eq:twopointbipbis}, which leads to the identification
\begin{equation}
\tilde{X}_{s,t}=\frac{[4]_{x,\alpha}[s+2]_{x,\alpha}[t+2]_{x,\alpha}[s+t+4]_{x,\alpha}}{[2]_{x,\alpha}[s+4]_{x,\alpha}[t+4]_{x,\alpha}[s+t+2]_{x,\alpha}}\ .
\label{eq:tildeXstbivariate}
\end{equation}
As easily checked, $\tilde{X}_{s,t}$ now solves the following recursion relation
\begin{equation}
\tilde{X}_{s,t}=1+g^2\, \tilde{U}_s \tilde{U}_t \tilde{X}_{s,t}\left(\tilde{U}_{t+1}\tilde{U}_{s+1} \tilde{X}_{s_1,t+1}+(z-1) \tilde{W}_s \tilde{W}_t\right)\ ,
\end{equation}
with
\begin{equation} 
\tilde{W}_s=\frac{\tilde{U}_s}{1+g\, \tilde{U}_s \tilde{T}_{s+1}}\ .
\end{equation}

The bivariate generating function $\tilde{Y}_{s,t,u}\equiv \tilde{Y}_{s,t,u}(g,z)$ of very-well-labelled 
$(s,t,u)^+$-Y-diagrams, with a weight $g$ per edge and $z$ per local max, may be  
obtained by solving the bivariate analog of \eqref{eq:recurYtilde}. It now reads
\begin{equation}
\begin{split}
&\hskip -10pt \tilde{Y}_{s,t,u}=1+g^3\, \tilde{U}_s \tilde{U}_t \tilde{U}_u \big(\tilde{X}_{s+1,t+1} \tilde{X}_{s+1,u+1} \tilde{X}_{t+1,u+1}\tilde{U}_{s+1}\tilde{U}_{t+1}\tilde{U}_{u+1}\tilde{Y}_{s+1,t+1,u+1}\\
& \hskip 210pt +(z-1) \tilde{W}_{s+1}\tilde{W}_{t+1}\tilde{W}_{u+1}\big)\ .\\
\label{eq:recurYtildebivariate}
\end{split}
\end{equation}
Again we were able to guess the (slightly involved) solution of this equation
\begin{equation}
\begin{split}
& \hskip -30pt \tilde{Y}_{s,t,u}=\frac{[s+4]_{x,\alpha}[t+4]_{x,\alpha}[u+4]_{x,\alpha}}{[3]_{x,\alpha}[4]_{x,\alpha}[s+2]_{x,\alpha}[t+2]_{x,\alpha}
[u+2]_{x,\alpha}[s+t+4]_{x,\alpha}[t+u+4]_{x,\alpha}[u+s+4]_{x,\alpha}}\times \\
& \\
&  \hskip -10pt \times (\alpha\, x [3]_{x,\alpha}[s\!+\!1]_{x,1}[t\!+\!1]_{x,1}[u\!+\!1]_{x,1}[s\!+\!t\!+\!u\!+\!5]_{x,\alpha^2}\!\\
& \hskip 140pt +\![1]_{x,\alpha}
[s\!+\!3]_{x,\alpha}[t\!+\!3]_{x,\alpha}[u\!+\!3]_{x,\alpha}[s\!+\!t\!+\!u\!+\!3]_{x,\alpha})\ .\\
\end{split}
\end{equation}
This yields
\begin{prop}[Bivariate three-point function of bipartite maps]
Given $d_{12}$, $d_{13}$ and $d_{23}$ three positive integers satisfying strict triangular inequalities,
and such that $d_{12}+d_{13}+d_{23}$ is even, the three-point function $\tilde{G}_{d_{12},d_{13},d_{23}}$ 
is given by
\begin{equation}
\begin{split}
&\hskip -26pt \tilde{G}_{d_{12},d_{13},d_{23}}=\Delta_s\Delta_t\Delta_u \tilde{F}_{s,t,u}\\
& \hskip -26pt \tilde{F}_{s,t,u}=\tilde{X}_{s,t}\tilde{X}_{s,u}\tilde{X}_{t,u}\tilde{Y}_{s,t,u}^2\\
& \hskip -26pt = \frac{[4]_{x,\alpha}}{([2]_{x,\alpha})^3([3]_{x,\alpha})^2
[s\!+\!t\!+\!2]_{x,\alpha}[t\!+\!u\!+\!2]_{x,\alpha}[u\!+\!s\!+\!2]_{x,\alpha}[s\!+\!t\!+\!4]_{x,\alpha}[t\!+\!u\!+\!4]_{x,\alpha}[u\!+\!s\!+\!4]_{x,\alpha}}\times \\ 
&\\
&  \hskip -2pt \times (\alpha\, x [3]_{x,\alpha}[s\!+\!1]_{x,1}[t\!+\!1]_{x,1}[u\!+\!1]_{x,1}[s\!+\!t\!+\!u\!+\!5]_{x,\alpha^2}\!\\
& \hskip 150pt +\![1]_{x,\alpha}
[s\!+\!3]_{x,\alpha}[t\!+\!3]_{x,\alpha}[u\!+\!3]_{x,\alpha}[s\!+\!t\!+\!u\!+\!3]_{x,\alpha})^2\\
& \\
& \hskip 220pt \text{with}\ s,t,u\ \text{as in \eqref{eq:dstubip}}\ .
\end{split}
\label{eq:threepointbipbivariate}
\end{equation}
\end{prop}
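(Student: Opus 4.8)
The plan is to transcribe the derivation of Sect.~\ref{sec:genthreepoint} (even case) restricted to the subclass of \emph{very}-well-labelled objects. First I would invoke the bijection (A) of Theorem~\ref{theo:bij_tpointed}: for $d_{12}=s+t$, $d_{13}=s+u$, $d_{23}=t+u$ as in \eqref{eq:dstubip}, a bipartite tri-pointed map with these distances corresponds, through its unique $[s,t,u]$-well-labelling, to an $(s,t,u)$-very-well-labelled map of type A. Hence $\tilde{G}_{d_{12},d_{13},d_{23}}$ is exactly the bivariate generating function (weight $g$ per edge, $z$ per local max) of such maps.

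Next I would weaken the constraints to $(s,t,u)^+$-very-well-labelled maps of type A (imposing only $\min(f_1)\geq-s+1$, and similarly for $f_2,f_3$) and decompose the backbone $\Gamma_{12}\cup\Gamma_{13}\cup\Gamma_{23}$ exactly as in the general case, cutting each border-chain at the first and last occurrence of a label $0$. This produces two $(s,t,u)^+$-very-well-labelled Y-diagrams and three very-well-labelled chains of type A. Two points make the transcription clean in the very-well-labelled setting: every very-well-labelled chain is automatically of type A (so that $\tilde{X}_{s,t}$ plays the role of $N_{s,t}$, with no type-B contribution), and the vertices of label $0$ at which the pieces are glued are never local max, so the weight $z$ per local max is unaffected by the gluing. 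Consequently the generating function of $(s,t,u)^+$-very-well-labelled maps of type A is $\tilde{X}_{s,t}\tilde{X}_{s,u}\tilde{X}_{t,u}\tilde{Y}_{s,t,u}^2=\tilde{F}_{s,t,u}$, and applying the finite-difference operators $\Delta_s\Delta_t\Delta_u$ to force $\min(f_1)=-s+1$, $\min(f_2)=-t+1$, $\min(f_3)=-u+1$ yields $\tilde{G}_{d_{12},d_{13},d_{23}}=\Delta_s\Delta_t\Delta_u\tilde{F}_{s,t,u}$. As in Sect.~\ref{sec:genthreepoint}, degenerate configurations in which one border collapses to a single vertex are correctly counted by picking the constant term $1$ in the relevant factors.

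It then only remains to substitute the explicit expression \eqref{eq:tildeXstbivariate} for $\tilde{X}_{s,t}$ and the guessed closed form for $\tilde{Y}_{s,t,u}$ into $\tilde{F}_{s,t,u}$ and simplify, using $[s]_{x,\alpha}=1-\alpha x^s$. Here the factors $[s+2]_{x,\alpha},[t+2]_{x,\alpha},[u+2]_{x,\alpha}$ and $[s+4]_{x,\alpha},[t+4]_{x,\alpha},[u+4]_{x,\alpha}$ cancel between the three $\tilde{X}$'s and $\tilde{Y}_{s,t,u}^2$, leaving the displayed formula \eqref{eq:threepointbipbivariate}.

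The one genuinely nonroutine ingredient, and hence the main obstacle, is the guessed closed form for $\tilde{Y}_{s,t,u}$: unlike the other building blocks it is not available in the literature, and establishing it requires verifying that the two-term ansatz satisfies the recursion \eqref{eq:recurYtildebivariate} (together with the normalization $\tilde{Y}_{s,t,u}=1+O(g)$, which pins down the solution uniquely as a power series). This verification is a heavy but elementary manipulation of products of $[\cdot]_{x,\alpha}$ factors; once it is in hand, the decomposition argument above is a direct copy of the general-maps computation and the final cancellation is mechanical.
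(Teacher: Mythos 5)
Your proposal is correct and follows essentially the same route as the paper: identify $\tilde{G}_{d_{12},d_{13},d_{23}}$ with the bivariate count of $(s,t,u)$-very-well-labelled maps of type A via Theorem~\ref{theo:bij_tpointed}(A), decompose the backbone into two Y-diagrams and three chains (with the key observations that very-well-labelled chains are automatically of type A and that the label-$0$ gluing vertices are never local max, so the weight $z$ is unaffected), apply $\Delta_s\Delta_t\Delta_u$, and substitute \eqref{eq:tildeXstbivariate} together with the guessed closed form for $\tilde{Y}_{s,t,u}$ verified against the recursion \eqref{eq:recurYtildebivariate}. You also correctly single out the guessing-and-checking of $\tilde{Y}_{s,t,u}$ as the only nonroutine step, which is precisely how the paper proceeds.
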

In a situation where, say $u=0$, $\tilde{G}_{d_{12},d_{13},d_{23}}$ is still obtained via \eqref{eq:bipaligned},
now with the bivariate $\tilde{X}_{s,t}$.

From \eqref{eq:threepointbipbivariate}, we can get for instance the small $g$ expansion
of $\tilde{G}_{2,2,2}$ (three vertices at pairwise distances $d_{12}=d_{13}=d_{23}=2$) by setting
$s=t=u=1$ in \eqref{eq:threepointbipbivariate}. 
From \eqref{eq:TiUitilde}, we have the expansion
\begin{equation}
\begin{split}
& \hskip -20pt x= g\!+\!2(1\!+\!z) g^2\!+\!(5\!+\!13z\!+\!3z^2) g^3\!+\!(14\!+\!66z\!+\!40z^2\!+\!4z^3) g^4\!+\\
& \!+\!(42\!+\!306z\!+\!339z^2\!+\!90z^3\!+\!5 z^4)g^5
\!+\!2(66\!+\!678z\!+\!1168 z^2\!+\!572 z^3\!+\!85 z^4\!+\!3 z^5)g^6+
\!\ldots \\
& \\
& \hskip -20pt \alpha= z\!+\!2z(1\!-\!z) g\!+\!z(1\!-\!z)(8\!-\!z) g^2\!+\!32 z(1\!-\!z) g^3 
+3z(1\!-\!z)(43\!+\!14z\!) g^4\\
&\!+\!2z(1\!-\!z)(261\!+\!214z\!+\!26z^2)g^5\!+\!z(1\!-\!z)(2116\!+\!3093z\!+\!958z^2\!+\!62z^3)g^6
\!+\!\ldots \\
\end{split}
\end{equation}
and consequently
\begin{equation}
\tilde{G}_{2,2,2} = 2 z g^3\!+\!3 z(4\!+\!3z) g^4\!+\!6z(9\!+\!16z\!+\!4z^2)g^5\!+\!z(220\!+\!667z\!+\!399 z^2\!+\!50 z^3) g^6\!+\!\ldots\\
\end{equation}
whose first terms may be checked by a simple inspection.

As a final exercise, let us look at the $z\to 0$ limit of $\tilde{G}_{d_{12},d_{13},d_{23}}$. Letting $\alpha \to 0$ in \eqref{eq:TiUitilde},
we have
\begin{equation}
g=\frac{x}{(1+x)^2}+O(\alpha)\ , \qquad z=(1-x)^3(1+x) \alpha +O(\alpha^2)\ .
\end{equation} 
and we find for $\tilde{F}_{s,t,u}$ the same leading term (linear in $z$) as we found in the previous section for $F_{s,t,u}^{\text{even}}$,
so that again
\begin{equation}
\begin{split}
& \tilde{G}_{d_{12},d_{13},d_{23}} =2\, x^{s+t+u}\, z+O(z^2)=2\, x^{\frac{d_{12}+d_{13}+d_{23}}{2} }\, z+O(z^2)\\
&\text{with}\ \  x= g\, \left( \frac{1-\sqrt{1-4g}}{2g}\right)^2\ .\\
\label{eq:threepointzzerobip}
\end{split}
\end{equation} 
We recover the three-point function of planar trees, which is of course not a surprise since trees are automatically bipartite. 

\section{Applications}
\label{sec:applications}

\subsection{Critical line}
\label{sec:critline}
Throughout this section, we will enumerate maps {\it with a fixed value of $z$}, ranging from $0$ to $\infty$. 
The limit of maps with a large number of edges may then be captured by looking at the singularities in the variable $g$
of the various generating functions at hand. More precisely, these generating functions become singular when $g$ reaches a {\it critical value}
$g_{\text{crit}}\equiv g_{\text{crit}}(z)$ depending on $z$. The points $(z,g_{\text{crit}}(z))$ define the 
so-called {\it critical line} in the $(z,g)$ plane. In the case of general planar maps, this critical line may be found by looking for instance at the
singularities of the generating functions $T$ and $U$ introduced in \eqref{eq:TiUi}. These functions satisfy
the following recursion relations (which follow directly from the definition of $T_s$ and $U_s$ as generating functions for $(s)^+$-well-labelled
maps, which are particular instances of trees)
\begin{equation}
E_1\equiv T-z-g\, (2 U\, T+T^2)=0\ , \qquad E_2\equiv U-1-g\, (2 U\, T+U^2)=0\ .
\end{equation}
The location of their singularities is obtained by writing
\begin{equation}
0=\det\left( 
\begin{matrix}
\partial_T E_1 & \partial_T E_2 \\
\partial_U E_1 & \partial_U E_2 \\
\end{matrix}
\right)=4 g^2\, U^2+4 g^2\, T^2+4 g^2\, U\, T-4 g\, U-4 g\, T +1\ ,
\end{equation}
which upon setting $U=\upsilon/g$ and $T=\tau/g$ reads
\begin{equation}
4 \upsilon^2+4\tau^2 +4 \upsilon\tau -4 \upsilon -4 \tau +1=0\ .
\label{eq:ellipse}
\end{equation}
The values of $g_{\text{crit}}$ and $z$ are then recovered by writing $E_1=E_2=0$, namely
\begin{equation}
g_{\text{crit}}=\upsilon(1\!-\!\upsilon\!-\!2\tau)\ , \qquad z= \frac{\tau  (1\!-\!\tau\!-\!2 \upsilon)}{\upsilon  (1\!-\!\upsilon\!-\!2 \tau)}\ .
\end{equation}
The ellipse \eqref{eq:ellipse} may be parametrized as
\begin{equation}
\upsilon= \frac{2 r^2}{3 \left(r^2+3\right)}\ , \qquad \tau=\frac{(r-3)^2}{6 \left(r^2+3\right)}\ ,
\end{equation}
which yields the following parametrization of the critical line
\begin{equation}
z= \frac{(3-r)^3 (r+1)}{16 r^3}\, \qquad g_{\text{crit}}(z)=\frac{4 r^3}{3 \left(r^2+3\right)^2} \ ,
\label{eq:paramcrit}
\end{equation}
with $r$ varying from $3$ down to $0$ when $z$ varies from $0$ to $\infty$ (and in particular $r=1$ for $z=1$). Note that 
going from $z$ to $1/z$ corresponds in the parameter $r$ to performing the involution $r\to (3-r)/(1+r)$, and that 
$g_{\text{crit}}(1/z)=z\, g_{\text{crit}}(z)$. This property is a direct consequence of the trivial bijection which associates to each map 
its dual map. For $z\to 0$, we find $g_{\text{crit}}(0)=1/4$, consistent with a number of trees with $n$ edges growing like $4^n$,
while for $z=1$, we find $g_{\text{crit}}(1)=1/12$, consistent with a number of planar maps with $n$ edges 
growing like $12^n$. Finally, for $z\to \infty$, we find $g_{\text{crit}}(z)\sim1/(4z)$ as expected by duality with the $z\to 0$ limit
(the dominant configurations at large $z$ are dual to trees).

\subsection{Scaling limit}
\label{sec:scaling}
All the generating functions for general maps introduced in this paper are singular, for a fixed $z$, when $g\to g_{\text{crit}}(z)$.
The so-called {\it scaling limit} is then obtained by letting, for a fixed $z$, the weight $g$ tend to $g_{\text{crit}}(z)$ as
\begin{equation}
g=g_{\text{crit}}(z)(1-\epsilon^4)\ , \qquad \epsilon \to 0
\label{eq:gtogcrit}
\end{equation}
and letting {\it simultaneously all the distances} between the marked vertices {\it tend to} $\infty$ as $1/\epsilon$.
In other word, it amounts to write, in addition to \eqref{eq:gtogcrit},
\begin{equation}
d_{12}=\frac{\mathcal{D}_{12}}{\epsilon}\, \quad d_{13}=\frac{\mathcal{D}_{13}}{\epsilon}\, \quad d_{23}=\frac{\mathcal{D}_{23}}{\epsilon}\, \qquad
s=\frac{\mathcal{S}}{\epsilon}\, \quad t=\frac{\mathcal{T}}{\epsilon}\, \quad u=\frac{\mathcal{U}}{\epsilon}\
\label{eq:scaldist}
\end{equation}
with $\mathcal{D}_{12}$, $\mathcal{D}_{13}$, $\mathcal{D}_{23}$, $\mathcal{S}$, $\mathcal{T}$ and $\mathcal{U}$ remaining finite
when $\epsilon \to 0$.
The leading order in $\epsilon$ of the various generating functions then defines what we shall call {\it continuous scaling functions}.
The computation of these continuous scaling functions is in principle only a first step in getting the asymptotics of large maps. 
Some extra step is indeed required to extract from these functions properly normalized {\it continuous
canonical scaling functions} corresponding now to {\it genuine probability densities} for renormalized distances $\mathcal{D}_{ij}=d_{ij}/n^{1/4}$ 
in an ensemble of maps with 
{\it a fixed number $n$ of edges}, in the limit $n\to \infty$. The reader is invited to consult \cite{GEOD} for instance for an explicit example
of how to perform this second step. Still, as we shall now see, a number of large $n$ asymptotic results are directly readable 
from the continuous scaling functions themselves.

From \eqref{eq:TiUi}, fixing simultaneously the parameters $z$ as in \eqref{eq:paramcrit} and $g$ as in \eqref{eq:gtogcrit} is achieved by adjusting simultaneously $x$ and $\alpha$ as functions of $r$ and $\epsilon$. When $\epsilon\to 0$, both $x$ and $\alpha$ tend to $1$ and we
find in particular 
\begin{equation}
x=1- 2\, \gamma \, \epsilon +O(\epsilon^2)\ , \qquad \gamma= \sqrt{\frac{3(3-r) \sqrt{r^2+3}}{2r (r+3)}}\ .
\label{eq:gammaval}
\end{equation}
The quantity $\gamma$ will be referred to as the {\it scaling factor} in the following, as it fixes the scale for distances 
in the various continuous generating functions that we shall encounter. Its value at $z=1$ ($r=1$)
is $\sqrt{3/2}$.
Letting $\epsilon\to 0$, we find the expansions (these expansions require in practice expanding both $x$ and $\alpha$ at a sufficiently large
order in $\epsilon$, we skip the details here)
\begin{equation}
\begin{split}
& \hskip -10.pt N_{s,t} = \frac{6}{3+r} \Big\{(1\!+\!\epsilon \chi(\mathcal{S},\mathcal{T})\Big\}\!+\!O(\epsilon^2)\ , 
\frac{1}{1-g\, D_{s,t}} = \frac{3+r}{2r} \Big\{1\!+\!\epsilon \chi(\mathcal{S},\mathcal{T})\Big\}\!+\!O(\epsilon^2)\\ 
&\hskip -10.pt  \text{where}\ \chi\ \text{satisfies}\ \partial_{\mathcal{S}}\partial_{\mathcal{T}}\chi(\mathcal{S},\mathcal{T})=
2 \gamma^3 \frac{\cosh(\gamma(\mathcal{S}+\mathcal{T}))}{\sinh^3(\gamma(\mathcal{S}+\mathcal{T}))}\ .\\
\end{split}
\label{eq:NDscal}
\end{equation}
Both expressions lead, via \eqref{eq:twopointbisbivariate} and \eqref{eq:twopointterbivariate}, to
\begin{equation}
G_{d_{12}}=\epsilon^3 \mathcal{G}(\mathcal{D}_{12})\!+\!O(\epsilon^4)\ , \qquad  \mathcal{G}(\mathcal{D}_{12})=
2 \gamma^3 \frac{\cosh(\gamma\mathcal{D}_{12})}{\sinh^3(\gamma\mathcal{D}_{12})}\ .
\end{equation}
Here we recognize the {\it continuous two-point function} $\mathcal{G}(\mathcal{D}_{12})$ found in \cite{GEOD} for quadrangulations, up to
the $z$-dependent scaling factor $\gamma$, which appears both in the argument of the function, hence fixes the scale
for $\mathcal{D}_{12}$, and in its normalization. Note that at $r=1$ ($\gamma=\sqrt{3/2}$), this normalization 
differs by a factor of $2$ from that of quadrangulations. This is because there are asymptotically twice as many bi-pointed quadrangulations
with $n$ faces as bi-pointed planar maps with $n$ edges. More generally, the normalization is simply inherited from the asymptotics
of bi-pointed maps in the fixed $z$ ensemble. It is wiped out when extracting from $\mathcal{G}(\mathcal{D}_{12})$ the 
continuous canonical two-point function (the probability density for $\mathcal{D}_{12}=d_{12}/n^{1/4}$ for maps with a fixed, large $n$),
which thus differs from that of quadrangulations found in \cite{GEOD} only by the change of scale $\mathcal{D}_{12}\to \gamma \mathcal{D}_{12}$.

From \eqref{eq:NDscal}, we also deduce
\begin{equation}
\begin{split}
& \hskip -10.pt \Delta_s\Delta_t N_{s,t} \!=\! \epsilon^3\, \frac{6}{3\!+\!r} \mathcal{G}(\mathcal{D}_{12})+\!O(\epsilon^4)\ ,\quad 
\Delta_s\Delta_t  \frac{1}{1-g\, D_{s,t}}\!=\!\epsilon^3\, \frac{3\!+\!r}{2r} \mathcal{G}(\mathcal{D}_{12})+\!O(\epsilon^4)\ ,\\ 
&\hskip 100.pt  \text{with}\ \mathcal{S}+\mathcal{T}=\mathcal{D}_{12}\ .\\
\end{split}
\end{equation}
Without any further calculation, we immediately deduce from these equations that 
$\Delta_s\Delta_t N_{s,t}$ and $\Delta_s\Delta_t  1/(1-g\, D_{s,t})$ have,
up to an explicit normalization factor, the same large $n$ asymptotics as the two-point function $G_{d_{12}}$. From Remark 2,
the reader will realize that, as bi-pointed map generating functions, $\Delta_s\Delta_t N_{s,t}$  with $s+t=d_{12}$ 
(resp. $\Delta_s\Delta_t  1/(1-g\, D_{s,t})$ with
$s+t-1=d_{12}$) differs from the two-point 
function $G_{d_{12}}$ only by the marking of an extra vertex (resp. an extra edge) {\it belonging to a geodesic path} 
between the two marked vertices, this extra vertex being at distance $s$ from the first vertex (resp.\ this extra edge 
being the $s$-th edge along the path). 
We may therefore interpret
\begin{equation}
N_{\text{geod vertices}}=  \frac{6}{3+r} \quad \text{and}\quad N_{\text{geod edges}}=\frac{3+r}{2r}
\end{equation} 
as the average numbers of {\it geodesic vertices} and {\it geodesic edges} between two far-away (in practice at a distance of
order $n^{1/4}$) vertices, at a fixed distance (itself of order $n^{1/4}$) from the first vertex, in the ensemble of bi-pointed general
planar maps with $n$ edges, in the limit $n\to \infty$. Note that these numbers do not depend on the position along the geodesic path
in this limit.
For $z=1$ ($r=1$), we find $N_{\text{geod vertices}}=3/2$ and
$N_{\text{geod edges}}=2$. For $z\to 0$ ($r\to 3$), we find $N_{\text{geod vertices}}=N_{\text{geod edges}}=1$,
in agreement with the fact that the map then degenerates into a tree, with a single geodesic
path between two given vertices. 

As for the three-point function, we find that both $F^{\text{even}}_{s,t,u}$ and $F^{\text{odd}}_{s,t,u}$ behave at leading order in
$\epsilon$ as $\epsilon^{-2} {\mathcal F}(\mathcal{S},\mathcal{T},\mathcal{U})$ where
\begin{equation}
\hskip -5.pt 
{\mathcal F}(\mathcal{S},\mathcal{T},\mathcal{U}) =
\frac{3(3\!-\!r)^2}{2(3\!+\!r)^3\gamma^2}
\left(\frac{2\sinh(\gamma(\mathcal{S}\!+\!\mathcal{T}\!+\!\mathcal{U})) \sinh(\gamma\, \mathcal{S}) \sinh(\gamma\, \mathcal{T}) \sinh(\gamma \,\mathcal{U})
}{ \sinh(\gamma(\mathcal{S}\!+\!\mathcal{T})) \sinh(\gamma(\mathcal{T}\!+\!\mathcal{U})) \sinh(\gamma(\mathcal{U}\!+\!\mathcal{S}))
}\right)^2
\label{eq:Fscal}
\end{equation}
so that
\begin{equation}
\begin{split}
& \hskip -10pt G_{d_{12},d_{13},d_{23}}=  {\mathcal G}(\mathcal{D}_{12},\mathcal{D}_{13},\mathcal{D}_{23})\,  \epsilon\!+\! O(\epsilon^2)
\quad \text{with}\ {\mathcal G}=\partial_{\mathcal{S}}\partial_{\mathcal{T}}\partial_{\mathcal{U}}{\mathcal F}(\mathcal{S},\mathcal{T},\mathcal{U})\\
& \\
& \hskip -10pt 
\text{and}\  \mathcal{D}_{12}=\mathcal{S}+\mathcal{T}\ ,  \mathcal{D}_{13}=\mathcal{S}+\mathcal{U}\ ,  \mathcal{D}_{23}=\mathcal{T}+\mathcal{U}\ .\\
\end{split}
\end{equation}
Here again we recognize the continuous three-point function ${\mathcal G}(\mathcal{D}_{12},\mathcal{D}_{13},\mathcal{D}_{23})$ found in \cite{BG08} for quadrangulations, now however with a new scale for the distances $\mathcal{D}_{ij}$, fixed by the $z$-dependent 
scaling factor $\gamma$, and with a new normalization. This 
normalization is inherited from the asymptotics of tri-pointed maps and we may understand it as follows: 
when $s$, $t$ and $u$ tend to $\infty$, $F^{\text{even}}_{s,t,u}+F^{\text{odd}}_{s,t,u}$ tends to the generating 
function of tri-pointed planar maps, in the fixed $z$ ensemble, with no constraint on the pairwise distances between the three marked vertices. 
Letting  accordingly $\mathcal{S}$, $\mathcal{T}$ and $\mathcal{U}$
tend to $\infty$ in $2{\mathcal F}(\mathcal{S},\mathcal{T},\mathcal{U})$ (in which case the term within the big parentheses in \eqref{eq:Fscal} 
tends to $1$), the normalization in front measures the $\epsilon^{-2}=1/\sqrt{1-g/g_{\text{crit}}}$ singularity of the generating 
function of tri-pointed planar maps, hence leads to the following asymptotics, at large number $n$ of edges, of the ``number"
of tri-pointed maps (enumerated with a weight $z$ per face)
\begin{equation}
\#\text{tri-pointed maps}\ \sim \frac{3(3-r)^2}{(3+r)^3\gamma^2} \frac{g_{\text{crit}}^{-n}}{\sqrt{\pi} n^{1/2}}\ .
\end{equation} 
Understanding the normalization in \eqref{eq:Fscal} is therefore equivalent to understanding this asymptotics.

Now, from the expansions (see \cite{BFG} for a precise meaning of these functions as map generating functions)
\begin{equation}
\begin{split}
& \log(1+g\, U\, T)=\text{const.}-\frac{\gamma^2}{3}\epsilon^2+O(\epsilon^3)\ , \\
&1+2 g\, U\, T +g\, T^2 = \text{const.}-\frac{(3+r)(3+r^2)\gamma^2}{12r^2}\epsilon^2+O(\epsilon^3)\ , \\
\end{split}
\end{equation}
we read the asymptotics of the numbers of bi-pointed and pointed-rooted (with a marked vertex and a marked oriented edge)
planar maps at large number $n$ of edges
\begin{equation}
\begin{split}
&\#\text{bi-pointed maps}\ \sim \frac{\gamma^2}{3}  \frac{g_{\text{crit}}^{-n}}{2\sqrt{\pi} n^{3/2}}\ ,\\
&\#\text{pointed-rooted maps}\ \sim \frac{(3+r)(3+r^2)\gamma^2}{12r^2} \frac{g_{\text{crit}}^{-n}}{2\sqrt{\pi} n^{3/2}}\ .\\
\end{split}
\end{equation}
Both could have been obtained by starting from simply pointed maps and choosing an extra vertex in the first case or  
an extra oriented edge in the second case.
Since there are $2n$ choices of oriented edges, we deduce by comparison that, in an ensemble of planar maps with a fixed number of edges $n$, 
and with a fixed weight $z$ per face, the average number $n_v\equiv n_v(z)$ of vertices and that, $n_f\equiv n_f(v)=n+2-n_v$, of faces are,
asymptotically at large $n$, given by
\begin{equation}
n_v\sim \frac{8r^2}{(3+r)(3+r^2)} \ n\ , \qquad n_f\sim \frac{(1+r)(3-r)^2}{(3+r)(3+r^2)} \ n\
\end{equation}
(these asymptotics should not depend on the precise ensemble, namely whether the maps are simply pointed,
as in the above argument, multiply pointed, rooted or not). 
For $z=1$, ($r=1$), $n_v\sim n_f\sim n/2$ as expected since by duality, there are on average as many faces 
as vertices. More generally, the expressions for $n_v$ and $n_f$ are exchanged under $r\to (3-r)/(1+r)$ (corresponding
to $z\to 1/z$) as a consequence of duality. For $z\to 0 $ ($r\to 3$), 
$n_v\sim n$ as expected for trees (or more generally maps with a finite number of faces) which have asymptotically as many vertices as edges.

Returning to tri-pointed maps, they may be obtained similarly from bi-pointed maps by choosing
an extra third vertex. This explains eventually their asymptotics via the identity
\begin{equation}
\frac{8r^2}{(3+r)(3+r^2)} \, n \times \frac{\gamma^2}{3}  \frac{g_{\text{crit}}^{-n}}{2\sqrt{\pi} n^{3/2}}\ =  \frac{3(3-r)^2}{(3+r)^3\gamma^2} 
\frac{g_{\text{crit}}^{-n}}{\sqrt{\pi} n^{1/2}}\ .
\end{equation}

To conclude, if we now extract from \eqref{eq:Fscal} the continuous canonical
three-point function (corresponding, in an ensemble of maps with a large fixed number $n$ of edges
and with three marked vertices, to the probability density for prescribed pairwise renormalized distances $\mathcal{D}_{12}=d_{12}/n^{1/4}$, 
$\mathcal{D}_{13}=d_{13}/n^{1/4}$
and $\mathcal{D}_{23}=d_{23}/n^{1/4}$), all prefactors are wiped out by normalization so that this canonical continuous three-point function is exactly the same as  that found in \cite{BG08} for quadrangulations
{\it apart from a global change of scale of the renormalized distances} $\mathcal{D} \to \gamma \mathcal{D}$.  In particular,
its $z$-dependence is entirely contained in the scaling factor $\gamma$ via \eqref{eq:gammaval}.

\subsection{Bipartite maps}
\label{sec:biparapp} 
Let us repeat our derivation of the critical line in the $(z,g)$ plane, now in the case of bipartite planar maps. To this end, we
now look at the singularities of the generating functions $\tilde{T}$ and $\tilde{U}$ introduced in \eqref{eq:TiUitilde}. These functions satisfy
the recursion relations
\begin{equation}
\tilde{E}_1\equiv \tilde{T}-z-g\, 2 \tilde{U}\, \tilde{T}=0\ , \qquad \tilde{E}_2\equiv \tilde{U}-1-g\, (\tilde{U}\, \tilde{T}+\tilde{U}^2)=0
\end{equation}
and the location of their singularities is obtained from
\begin{equation}
0=\det\left( 
\begin{matrix}
\partial_{\tilde{T}} \tilde{E}_1 & \partial_{\tilde{T}} \tilde{E}_2 \\
\partial_{\tilde{U}} \tilde{E}_1 & \partial_{\tilde{U}} \tilde{E}_2 \\
\end{matrix}
\right)=4 g^2\, U^2-4 g\, U- g\, T +1\ ,
\end{equation}
which upon setting $\tilde{U}=\tilde{\upsilon}/g$ and $\tilde{T}=\tilde{\tau}/g$ reads
\begin{equation}
4 \tilde{\upsilon}^2-4 \tilde{\upsilon} -\tilde{\tau} +1=0\ .
\label{eq:parabola}
\end{equation}
Writing $\tilde{E}_1=\tilde{E}_2=0$, the values of $g_{\text{crit}}$ and $z$ follow
\begin{equation}
 z=\frac{(1-2 \tilde{\upsilon} )^3}{(3-4 \tilde{\upsilon} ) \tilde{\upsilon}^2}\ , \qquad
g_{\text{crit}}=(3-4 \tilde{\upsilon} ) \tilde{\upsilon}^2\ ,
\label{eq:critlinebip}
\end{equation}
with $\tilde{\upsilon}$ varying from $1/2$ down to $0$ when $z$ varies from $0$ to $\infty$ (and in particular $\tilde{\upsilon}=1/4$ for $z=1$). 
For $z\to 0$, we find again $g_{\text{crit}}(0)=1/4$, consistent with the fact that we enumerate trees,
while for $z=1$, we find $g_{\text{crit}}(1)=1/8$, consistent with a number of planar bipartite maps with $n$ edges 
growing like $8^n$. Finally, for $z\to \infty$, we find $g_{\text{crit}}(z)\sim1/z$. Note that there is no duality symmetry for bipartite maps. 

The scaling limit is again reached, for a fixed value of $z$, by letting $g$ tend to its critical value \eqref{eq:critlinebip} exactly as in \eqref{eq:gtogcrit}
and letting simultaneously all the distances between the marked vertices tend to $\infty$ as in \eqref{eq:scaldist}.
From \eqref{eq:TiUitilde}, this amounts to again adjust $x$ and $\alpha$ as functions of $\tilde{\upsilon}$ and $\epsilon$ and,
when $\epsilon\to 0$, we now find  
\begin{equation}
x=1- 2\, \gamma\, \epsilon +O(\epsilon^2)\ , \qquad \gamma= \sqrt{\frac{\sqrt{3}(1-2\tilde{\upsilon})}{2\sqrt{\tilde{\upsilon}(1-\tilde{\upsilon})}}}\ .
\label{eq:gammatildeval}
\end{equation}

The value of the scaling factor at $z=1$ ($\tilde{\upsilon}=1/4$)
is now $\gamma=1$.
Letting $\epsilon\to 0$, we find the expansion
\begin{equation}
\begin{split}
&  \tilde{X}_{s,t} = (3-4\tilde{\upsilon}) \Big\{(1\!+\!\epsilon \tilde{\chi}(\mathcal{S},\mathcal{T})\Big\}\!+\!O(\epsilon^2)\\
& \text{where}\ \tilde{\chi}\ \text{satisfies}\ \partial_{\mathcal{S}}\partial_{\mathcal{T}}\tilde{\chi}(\mathcal{S},\mathcal{T})=
4 \gamma^3 \frac{\cosh(\gamma(\mathcal{S}+\mathcal{T}))}{\sinh^3(\gamma(\mathcal{S}+\mathcal{T}))}\\
\end{split}
\label{eq:Xscalbip}
\end{equation}
and consequently
\begin{equation}
\tilde{G}_{d_{12}}=\epsilon^3 \tilde{\mathcal{G}}(\mathcal{D}_{12})\!+\!O(\epsilon^4)\ , \qquad  \tilde{\mathcal{G}}(\mathcal{D}_{12})=
4 \gamma^3 \frac{\cosh(\gamma\mathcal{D}_{12})}{\sinh^3(\gamma\mathcal{D}_{12})}\ ,
\end{equation}
where we recognize again the continuous two-point function of \cite{GEOD}, up to the 
$z$-dependent scaling factor $\gamma$. 

Looking at the prefactor in \eqref{eq:Xscalbip}, we deduce also the average number of geodesic vertices between two far-away vertices,
at some arbitrary but fixed distance from the first vertex
\begin{equation}
N_{\text{geod vertices}}= 3-4\tilde{\upsilon}\ .
\end{equation} 
The formula gives $N_{\text{geod vertices}}=2$ for $z=1$ ($\tilde{\upsilon}=1/4$) and $N_{\text{geod vertices}}=1$ for $z\to 0$ ($\tilde{\upsilon}\to1/2$),
as expected for trees. 

From the expansions (see \cite{BFG} for the interpretation of these functions as bipartite map generating functions)
\begin{equation}
\begin{split}
& \log(1+g\, \tilde{U}\, \tilde{T})=\text{const.}-\frac{2\gamma^2}{3}\epsilon^2+O(\epsilon^3)\ , \\
&1+2 g\, \tilde{U}\, \tilde{T}  = \text{const.}-\frac{4(1-\tilde{\upsilon})\gamma^2}{3\tilde{\upsilon} (3-4\tilde{\upsilon})}\epsilon^2+O(\epsilon^3)\ , \\
\end{split}
\end{equation}
we read the asymptotics of the numbers of bi-pointed and pointed-rooted bipartite planar maps at large number $n$ of edges
\begin{equation}
\begin{split}
&\#\text{bi-pointed bipartite maps}\ \sim \frac{2\gamma^2}{3}\  \frac{g_{\text{crit}}^{-n}}{2\sqrt{\pi} n^{3/2}}\ ,\\
&\#\text{pointed-rooted bipartite maps}\ \sim \frac{4(1-\tilde{\upsilon})\gamma^2}{3\tilde{\upsilon} (3-4\tilde{\upsilon})}\frac{g_{\text{crit}}^{-n}}{2\sqrt{\pi} n^{3/2}}\ ,\\
\end{split}
\end{equation}
with $g_{\text{crit}}$ as in \eqref{eq:critlinebip}
and, by comparison, the asymptotics of the average numbers $n_v=n_v(z)$ of vertices and $n_f=n_f(z)$ of faces 
\begin{equation}
n_v\sim \tilde{\upsilon} \frac{3-4 \tilde{\upsilon}}{1-\tilde{\upsilon}} \ n\ , \qquad n_f\sim \frac{(1-2 \tilde{\upsilon})^2}{1-\tilde{\upsilon}} \ n\ .
\end{equation}
For $z=1$ ($\tilde{\upsilon}=1/4$), $n_v\sim 2n/3$ and $n_f\sim n/3$, as expected since, via a trivial bijection with Eulerian triangulations,
bipartite maps have equally distributed numbers of faces, of vertices of even color and of vertices of odd color.
For $z\to 0 $ ($\tilde{\upsilon}\to1/2$), $n_v\sim n$, as expected.

Using
\begin{equation}
\tilde{\upsilon} \frac{3-4 \tilde{\upsilon}}{1-\tilde{\upsilon}} \ n \times \frac{2\gamma^2}{3}\  \frac{g_{\text{crit}}^{-n}}{2\sqrt{\pi} n^{3/2}}=  
\frac{(1-2 \tilde{\upsilon} )^2 (3-4 \tilde{\upsilon})}{4 (1-\tilde{\upsilon} )^2\gamma^2} \frac{g_{\text{crit}}^{-n}}{\sqrt{\pi} n^{1/2}}\ ,
\end{equation}
for $\gamma$ as in \eqref{eq:gammatildeval}, the asymptotics of tri-pointed maps is then
\begin{equation}
\#\text{tri-pointed bipartite maps}\ \sim  \frac{(1-2 \tilde{\upsilon} )^2 (3-4 \tilde{\upsilon})}{4 (1-\tilde{\upsilon} )^2\gamma^2}  \frac{g_{\text{crit}}^{-n}}{\sqrt{\pi} n^{1/2}}\ .
\end{equation} 
This is consistent with the expansion $\tilde{F}_{s,t,u}\sim \epsilon^{-2} \tilde{{\mathcal F}}(\mathcal{S},\mathcal{T},\mathcal{U})$ for which
we find 
\begin{equation}
\hskip -2.pt 
\tilde{{\mathcal F}}(\mathcal{S},\mathcal{T},\mathcal{U}) =
\frac{(1\!-\!2 \tilde{\upsilon} )^2 (3\!-\!4 \tilde{\upsilon})}{4 (1\!-\!\tilde{\upsilon} )^2\gamma^2}
\left(\frac{2\sinh(\gamma(\mathcal{S}\!+\!\mathcal{T}\!+\!\mathcal{U})) \sinh(\gamma\, \mathcal{S}) \sinh(\gamma\, \mathcal{T}) \sinh(\gamma \,\mathcal{U})
}{ \sinh(\gamma(\mathcal{S}\!+\!\mathcal{T})) \sinh(\gamma(\mathcal{T}\!+\!\mathcal{U})) \sinh(\gamma(\mathcal{U}\!+\!\mathcal{S}))
}\right)^2\ .
\label{eq:Ftildescal}
\end{equation}
Note that the continuous three-point function is now
\begin{equation}
\begin{split}
& \hskip 30pt \tilde{{\mathcal G}}(\mathcal{D}_{12},\mathcal{D}_{13},\mathcal{D}_{23})=\frac{1}{2}\partial_{\mathcal{S}}\partial_{\mathcal{T}}\partial_{\mathcal{U}}\tilde{{\mathcal F}}(\mathcal{S},\mathcal{T},\mathcal{U})\\
& \\
& \hskip 30pt 
\text{where}\  \mathcal{D}_{12}=\mathcal{S}+\mathcal{T}\ ,  \mathcal{D}_{13}=\mathcal{S}+\mathcal{U}\ ,  \mathcal{D}_{23}=\mathcal{T}+\mathcal{U}\ ,\\
\end{split}
\end{equation}
with a $1/2$ factor arising from the parity constraint on the sum of the three pairwise distances.
Again, the (normalized) continuous canonical
three-point function is exactly the same as  that for quadrangulations or general maps
except for the global change of scale of the renormalized distances through the scaling factor 
$\gamma$ of \eqref{eq:gammatildeval}.

\section{Conclusion}
\label{sec:conclusion}

In this paper, we established several bijections between bi-pointed or tri-pointed, general or bipartite, planar maps
with prescribed distances between their marked vertices and suitably defined well-labelled or 
very-well-labelled objects. From the generating functions of these latter objects, we were able to
recover known formulas for the two-point function of maps and to derive new explicit
expressions for their three-point function, with a control on both their numbers of edges and faces.

The techniques that we used are in all respects similar to those 
used in \cite{BG08} where the same questions were addressed in the context of planar quadrangulations. 
The intermediate well-labelled building blocks are essentially the same, with only slight differences in the game rules:
for instance, while the three-point function of quadrangulations involves $(s,t,u)^+$-well-labelled maps enumerated by
$X_{s,t}X_{s,u}X_{t,u}Y_{s,t,u}^2$, that of general maps is restricted to the subclass of those maps being of type A,
enumerated by $N_{s,t}N_{s,u}N_{t,u}Y_{s,t,u}^2$ or of type B, enumerated by $O_{s,t}O_{s,u}O_{t,u}Y_{s,t,u}^2$.

These small differences, although crucial at the discrete level, turn out to disappear in the scaling limit of large
maps. In particular, the continuous canonical two- and three-point functions of general planar maps with a large number
$n$ of edges are {\it exactly the same} as those of planar quadrangulations with a large number $n$ of faces.
As a consequence, all the different limiting situations discussed in \cite{BG08} are recovered for general maps.

As for general or bipartite planar maps enumerated with a fixed value of the weight $z$ per face, 
their continuous canonical two- and three-point functions at large number $n$ of edges are again the same
as for quadrangulations up to a global, $z$-dependent, change of scale in the distances by 
the scaling factor $\gamma$ of \eqref{eq:gammaval} or \eqref{eq:gammatildeval}. In this sense,
these functions are {\it universal} and this corroborates the belief that all the families of maps considered here 
lie in the same universality class, that of so-called ``pure gravity". In another, more probabilistic direction,
this universality is also confirmed, for $z=1$, by the fact that the so-called Brownian map \cite{LEGALL07} is the universal 
scaling limit of general and bipartite maps counted by their number of edges \cite{BJM13,CA14}.

Note that the universality class of pure gravity is reached provided $\gamma$ is finite and non-zero, which therefore excludes the cases $z\to 0$
($\gamma\to 0$) and $z\to \infty$ ($\gamma\to \infty$), which are thus in a different universality class.
For $z\to 0$, one finds (omitting prefactors) that, for both general and bipartite maps, $\gamma\simeq z^{1/6}$
and $n_f/n \simeq z^{2/3}$, and therefore $\gamma\simeq (n_f/n)^{1/4}$. This suggests that, when dealing
with an ensemble where we impose that $n_f\simeq n^b$ for some $b$, with $0\leq b<1$, the statistics of distances 
should be describable by an effective $n$-dependent scaling factor $\gamma$ satisfying $\gamma\simeq n^{(b-1)/4}$. 
This would indicate that the typical (discrete) distance $d$, of order $n^{1/4}/\gamma$, would scale as $d\simeq n^{(2-b)/4}$. 
For $b=0$ (i.e., with a finite number of faces), this gives $d\simeq n^{1/2}$ as expected. For $b\to 1$, the exponent 
tends to $1/4$ as expected. For $z\to \infty$ instead, one finds for general maps $\gamma\simeq z^{1/6}$
and $n_v/n \simeq z^{-2/3}$, and for bipartite maps $\gamma\simeq z^{1/8}$
and $n_v/n \simeq z^{-1/2}$. Therefore $\gamma\simeq (n_v/n)^{-1/4}$ in both cases. Hence, in an ensemble where 
we impose that $n_v\simeq n^c$ for some $c$, with $0\leq c<1$, we expect that the statistics of distances should be 
governed by an effective $n$-dependent scaling
factor $\gamma\simeq n^{(1-c)/4}$. This would indicate that the typical (discrete) distance $d$, of order $n^{1/4}/\gamma$, 
would now scale as $d\simeq n^{c/4}$. 
For $c=0$ (i.e., with a finite number of vertices), this gives $d\simeq n^{0}$ as expected, since distances
remain finite in maps with a finite number of vertices. For $c\to 1$, the exponent tends to $1/4$ as expected.
All these conjectures require careful calculations to be confirmed.

Even in the scaling limit, some non-universal quantities (other than the scaling factor) can be read off the normalizations of the various
scaling functions: we discussed for instance the asymptotic average numbers of vertices and faces in maps
with a fixed weight $z$ per face, as well as the number of geodesic vertices and edges between two points
far away. Many other non-universal quantities follow from our formulas: in particular, we could also easily
explore the (non-universal) so-called local limit, corresponding to vertices at a finite discrete distance in large (potentially infinite) maps. 

To conclude, let us mention that, in the context of quadrangulations, several refinements  of \cite{BG08} led to a more precise description of the
``geodesic triangle" formed by three random points (with in particular a measure of the so-called ``confluence phenomenon"
for geodesics) as well as to an evaluation of the distance properties of ``separating loops" \cite{BG09}.
There should not be any difficulty to apply the same refinements to our calculations and address similar questions
now in the context of general or bipartite planar maps. 

\section*{Acknowledgements} The work of
\'EF was partly supported by the ANR grant  
``Cartaplus'' 12-JS02-001-01 and the ANR grant ``EGOS'' 12-JS02-002-01.

\bibliographystyle{plain}
\bibliography{gen3p}

\end{document}